\definecolor{darkblue}{rgb}{0.0,0,0.7}
\newcommand{\darkblue}{\color{darkblue}}
\newcommand{\defn}[1]{\emph{\darkblue #1}}
\setlist[enumerate]{
	label=\textnormal{({\roman*})},
	ref={\roman*}}
\def\th@plain{%
	\thm@notefont{}
	\itshape 
}
\def\th@definition{%
	\thm@notefont{}
	\normalfont 
}
\def\fdsy@scale{1}
\newcommand\fdsy@mweight@normal{Book}
\newcommand\fdsy@mweight@small{Book}
\newcommand\fdsy@bweight@normal{Medium}
\newcommand\fdsy@bweight@small{Medium}
\DeclareFontFamily{U}{FdSymbolB}{}
\DeclareFontShape{U}{FdSymbolB}{m}{n}{
	<-7.1> s * [\fdsy@scale] FdSymbolB-\fdsy@mweight@small
	<7.1-> s * [\fdsy@scale] FdSymbolB-\fdsy@mweight@normal
}{}
\DeclareFontShape{U}{FdSymbolB}{b}{n}{
	<-7.1> s * [\fdsy@scale] FdSymbolB-\fdsy@bweight@small
	<7.1-> s * [\fdsy@scale] FdSymbolB-\fdsy@bweight@normal
}{}
\DeclareSymbolFont{fdrelations}{U}{FdSymbolB}{m}{n}
\DeclareMathSymbol{\lescc}{\mathrel}{fdrelations}{66}
\newtheorem{thm}{Theorem}[section]
\newtheorem{lemma}[thm]{Lemma}
\newtheorem{cor}[thm]{Corollary}
\newtheorem{prop}[thm]{Proposition}
\newtheorem{conj}[thm]{Conjecture}
\theoremstyle{definition}
\newtheorem{ex}[thm]{Example}
\newtheorem{rem}[thm]{Remark}
\numberwithin{figure}{section}
\numberwithin{equation}{section}
\def\wh{\widehat}
\def\emp{\nothing}
\def\sq{\square}
\def\zz{\mathbb Z}
\def\nn{\mathbb N}
\def\pp{\mathbb P}
\def\Om{\Omega}
\def\la{\lambda}
\def\ga{\gamma}
\def\al{\alpha}
\def\be{\beta}
\def\ve{\varepsilon}
\def\vk{\varkappa}
\def\cF{\mathcal F}
\def\ssu{\subset}
\def\<{\langle}
\def\>{\rangle}
\def\oa{\overrightarrow}
\def\rT{{\text {\rm T} } }
\def\0{{\mathbf 0}}
\def\nothing{\varnothing}
\def\.{\hskip.06cm}
\def\ts{\hskip.03cm}
\def\nin{\noindent}
\def\SP{{\textsc{\#P}}}
\def\GapP{{\textsc{GapP}}}
\def\poly{{\textsc{P}}}
\def\aN{\textrm{N}}
\def\aNr{\textrm{\em N}}
\def\aF{\textrm{F}}
\def\aFr{\textrm{\em F}}
\def\aK{\textrm{K}}
\def\aKr{\textrm{\em K}}
\def\ana{\emph{\textsf{a}}}
\def\bnb{\emph{\textsf{b}}}
\def\A{A}
\def\B{B}
\def\C{C}
\def\D{D}
\DeclareMathOperator{\Cen}{\textnormal{C}} 
\DeclareMathOperator{\Cendown}{\textnormal{C}_{\textnormal{down}}} 
\DeclareMathOperator{\Cenup}{\textnormal{C}_{\textnormal{up}}} 
\def\cN{\mathcal N}
\DeclareMathOperator{\Ec}{\mathcal{E}} 
\def\Forbdown{F_{\textnormal{down}}} 
\def\Forbup{F_{\textnormal{up}}} 
\newcommand{\Kc}{\mathcal{K}} 
\DeclareMathOperator{\Rb}{\mathbb{R}} 
\DeclareMathOperator{\Reg}{\textnormal{Reg}} 
\def\precc{\prec}
\def\succc{\succ}
\DeclareMathOperator{\vb}{\mathbf{v}} 
\DeclareMathOperator{\vmax}{v_{\textnormal{max}}} 
\DeclareMathOperator{\vmin}{v_{\textnormal{min}}} 
\DeclareMathOperator{\wgt}{\mathtt{wt}} 
\DeclareMathOperator{\Zb}{\mathbb{Z}} 
\DeclareMathOperator{\zero}{\mathbf{0}} 
\def\Cr{\mathcal{C}}
\def\Yu{{Y^{\<u\>}}}
\def\Vu{{V^{\<u\>}}}
\def\Yw{{Y^{\<w\>}}}
\def\eone{\textbf{\ts\textrm{e}$_1$}}
\def\etwo{\textbf{\ts\textrm{e}$_2$}}
\def\bq{\mathbf{q}}
\title[Extensions of the Kahn--Saks inequality]{Extensions of the Kahn--Saks inequality \\ for posets of width two}
\date{\today}
\author{Swee Hong Chan}
\address[Swee Hong Chan]{Department of Mathematics, UCLA,  Los Angeles, CA 90095.}
\email{\texttt{sweehong@math.ucla.edu}}
\author[\ts Igor Pak]{Igor Pak}
\address[Igor Pak]{Department of Mathematics, UCLA,  Los Angeles, CA 90095.}
\email{\texttt{pak@math.ucla.edu}}
\author[\ts Greta Panova]{Greta Panova}
\address[Greta Panova]{Department of Mathematics, USC,  Los Angeles, CA 90089.}
\email{\texttt{gpanova@usc.edu}}
\begin{document}

\begin{abstract}
The Kahn--Saks inequality is a classical result on
the number of linear extensions of finite posets.
We give a new proof of this inequality for posets of width two and both elements in the same chain
using explicit injections of lattice paths.  As a consequence
we obtain a $q$-analogue, a multivariate generalization and
an equality condition in this case.  We also discuss the equality
conditions of the Kahn--Saks inequality for general posets and
prove several implications between conditions conjectured
to be equivalent.
\end{abstract}

\maketitle

\section{Introduction}\label{sec:intro}

\subsection{Foreword}\label{ss:intro-foreword}
The study of linear extensions of finite posets is surprisingly rich as they
generalize permutations, combinations, standard Young tableaux, etc.  By contrast,
the inequalities for the numbers of linear extensions are quite rare and difficult
to prove as they have to hold \emph{for all} posets.  Posets of width two serve
a useful middle ground as on the one hand there are sufficiently many of them
to retain the diversity of posets, and on the other hand they can be analyzed
by direct combinatorial tools.

In this paper, we study two classical results in the area:
the \emph{Stanley inequality} (1981), and its generalization,
the \emph{Kahn--Saks inequality} (1984).
Both inequalities were proved using the geometric \emph{Alexandrov--Fenchel inequalities}
and remain largely mysterious.  Despite much effort, no combinatorial proof of these
inequalities has been found.

We give a new, fully combinatorial, proof of the Kahn--Saks inequality for posets of width two and both elements in the same chain.
In this case, linear extensions are in bijection with certain lattice paths, and we
prove the inequality by explicit injections.  This is the approach first pioneered
in~\cite{CFG,GYY} and more recently extended by the authors in~\cite{CPP2}.
In fact, Chung, Fishburn and Graham~\cite{CFG} proved Stanley's inequality
for width two posets and their conjecture paved a way to Stanley's paper~\cite{Sta}.
The details of our approach are somewhat different, but we do recover the
\emph{Chung--Fishburn--Graham} (CFG) \emph{injection} as a special case.  The construction in this
paper is quite a bit more technical and is heavily based on ideas in our previous
paper~\cite{CPP2}, where we established the \emph{cross-product conjecture}
in the special case of width two posets.

Now, our approach allows us to obtain \emph{$q$-analogues} of both inequalities in the style
of the \emph{$q$-cross-product inequality} in~\cite{CPP2}.  More importantly, it is also
robust enough to imply \emph{conditions for equality} of the Kahn--Saks inequalities for
the case of posets of width two and both elements in the same chain.
The corresponding result for the Stanley inequality
in the generality of all posets was obtained by Shenfeld and van Handel~\cite{SvH}
using technology of geometric inequalities.  Most recently, a completely different
proof was obtained by the first two authors~\cite{CP}.
Although the equality condition in the special case of
the Kahn--Saks inequality is the main result of paper,
we start with a special case of the Stanley inequality as a stepping stone
to our main results.

\smallskip

\subsection{Two main inequalities}\label{ss:intro-main}
Let \ts $P=(X,\prec)$ \ts be a finite poset.
A \defn{linear extension} of $P$ is a bijection \. $L: X \to [n]$, such that
\. $L(x) < L(y)$ \. for all \. $x \prec y$.
Denote by \ts $\Ec(P)$ \ts the set of linear extensions of $P$,
and write \. $e(P):=|\Ec(P)|$.  The following are two key results in the area:

\medskip

\begin{thm}[{\rm \defn{Stanley inequality}~\cite[Thm~3.1]{Sta}}]\label{t:Sta}
Let \ts  $P=(X,\prec)$ \ts be a finite poset, and let \ts $x\in X$.
Denote by \ts $\aNr(k)$ \ts the number of linear extensions \ts $L\in \Ec(P)$,
such that \ts $L(x)=k$.  Then:
\begin{equation}\label{eq:Sta-ineq}
\aNr(k)^2 \,\. \ge \,\. \aNr(k-1) \,\. \aNr(k+1) \quad \text{for all} \quad k\. > \. 1\ts.
\end{equation}
\end{thm}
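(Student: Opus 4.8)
The plan is to run the classical geometric argument behind Stanley's original proof: realize the numbers $\aNr(k)$ as mixed volumes of two fixed convex bodies and then invoke the Alexandrov--Fenchel inequality. Write $n := |X|$ and fix $x \in X$. I would work with the \emph{order polytope} $\OO(P) := \{\omega \in [0,1]^X : \omega_y \le \omega_z \text{ whenever } y \prec z\}$, which is triangulated by the simplices $\Delta_L := \{0 \le \omega_{L^{-1}(1)} \le \dots \le \omega_{L^{-1}(n)} \le 1\}$ over $L \in \Ec(P)$, each of volume $1/n!$ and with pairwise disjoint interiors. For $t \in [0,1]$, put $S_t := \OO(P) \cap \{\omega_x = t\}$, an $(n-1)$-dimensional polytope sitting in $\Rb^{X \smallsetminus \{x\}}$.

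The first step is a direct volume computation. Slicing a simplex $\Delta_L$ with $L(x) = k$ by the hyperplane $\{\omega_x = t\}$ produces the product of a corner simplex $\{0 \le \omega_{L^{-1}(1)} \le \dots \le \omega_{L^{-1}(k-1)} \le t\}$ of volume $t^{k-1}/(k-1)!$ and a corner simplex $\{t \le \omega_{L^{-1}(k+1)} \le \dots \le \omega_{L^{-1}(n)} \le 1\}$ of volume $(1-t)^{n-k}/(n-k)!$. Summing over $L$ and collecting by $k = L(x)$ yields
\begin{equation*}
\vol_{n-1}(S_t) \ = \ \sum_{k=1}^{n} \frac{\aNr(k)}{(k-1)!\,(n-k)!}\, t^{\,k-1}(1-t)^{\,n-k}.
\end{equation*}

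The crux is the Minkowski-linearity identity $S_t = (1-t)\,S_0 + t\,S_1$ for every $t \in [0,1]$, where $S_0 = \OO(P) \cap \{\omega_x = 0\}$ and $S_1 = \OO(P) \cap \{\omega_x = 1\}$. The inclusion $\supseteq$ is a short convexity check, using that every point of $S_0$ is forced to vanish on $\{y : y \prec x\}$ and every point of $S_1$ is forced to equal $1$ on $\{z : x \prec z\}$. For $\subseteq$, given $r \in S_t$ with $0 < t < 1$, I would write $r = (1-t)p + tq$ via the threshold rule: set $p_a := 0$ and $q_a := r_a/t$ if $r_a \le t$, and $p_a := (r_a - t)/(1-t)$ and $q_a := 1$ if $r_a \ge t$; a brief case analysis over relations $a \prec b$ (hence $r_a \le r_b$) then shows that $p$ and $q$ are order-preserving with the required boundary values, so $p \in S_0$ and $q \in S_1$. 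I expect this to be the delicate point — the content is that the decomposition is exact on the whole interval rather than merely piecewise — but the threshold construction should make the verification routine.

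Granting the identity, I would rescale $S_t$ by $1/(1-t)$ and substitute $s = t/(1-t)$, which turns $S_t$ into the Minkowski combination $S_0 + s\,S_1$ and the slice-volume formula into $\vol_{n-1}(S_0 + s\,S_1) = \sum_{k} \aNr(k)\, s^{\,k-1} / \bigl((k-1)!\,(n-k)!\bigr)$. Comparing with the Minkowski expansion $\vol_{n-1}(S_0 + s\,S_1) = \sum_{j=0}^{n-1} \binom{n-1}{j}\, V\bigl(S_0^{\,n-1-j}, S_1^{\,j}\bigr)\, s^{\,j}$, where $V(\cdot)$ is the mixed volume and $A^{\,i}$ means $A$ repeated $i$ times, identifies $\aNr(k) = (n-1)!\; V\bigl(S_0^{\,n-k}, S_1^{\,k-1}\bigr)$. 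Finally, for $2 \le k \le n-1$ the Alexandrov--Fenchel inequality, applied to $S_0, S_1$ with the remaining $n-3$ entries taken to be $S_0$ repeated $n-k-1$ times and $S_1$ repeated $k-2$ times, gives
\begin{equation*}
V\bigl(S_0^{\,n-k}, S_1^{\,k-1}\bigr)^{2} \ \ge \ V\bigl(S_0^{\,n-k+1}, S_1^{\,k-2}\bigr)\; V\bigl(S_0^{\,n-k-1}, S_1^{\,k}\bigr),
\end{equation*}
which is \eqref{eq:Sta-ineq}; the cases $k \ge n$ are trivial since then the right-hand side of \eqref{eq:Sta-ineq} vanishes. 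It is worth stressing that this route is irreducibly geometric: no combinatorial proof of \eqref{eq:Sta-ineq} is known for general $P$, and the injective method developed below applies only to posets of width two.
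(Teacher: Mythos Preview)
Your argument is correct: this is precisely Stanley's original proof via the order polytope and the Alexandrov--Fenchel inequality, and each step (the slice--volume computation, the Minkowski linearity $S_t=(1-t)S_0+tS_1$ via the threshold decomposition, the mixed-volume identification, and the final application of Alexandrov--Fenchel) goes through as you describe.

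However, the paper does not give this proof. Theorem~\ref{t:Sta} is stated as background, with a citation to~\cite{Sta} (which is exactly the argument you reproduce) and the remark that it follows from the Kahn--Saks inequality by adjoining~$\wh 1$. The paper's own contribution in this direction is the proof of Theorem~\ref{t:q-Sta} in Section~\ref{sec:Sta}: a purely combinatorial argument, valid only for width-two posets, that encodes linear extensions as lattice paths in~$\Reg(P)$ and obtains~\eqref{eq:q-Sta-ineq} by two applications of the explicit injection in Lemma~\ref{l:lattice path bijection 1}. Setting $q=1$ recovers~\eqref{eq:Sta-ineq} in the width-two case. So your route is genuinely different from the paper's: yours is geometric and covers all finite posets but gives no $q$-refinement and no direct handle on equality; the paper's injective proof is restricted to width two but yields the $q$-analogue coefficientwise and, via the equality lemmas of Section~\ref{sec:toolkit}, the equality characterization in Theorem~\ref{t:q-Sta-equality}. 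You already note this trade-off in your final sentence, which is accurate.
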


\medskip

In other words, the distribution of value of linear extensions on~$x$ is \emph{log-concave}.

\medskip

\begin{thm}[{\rm \defn{Kahn--Saks inequality}~\cite[Thm~2.5]{KS}}]\label{t:KS}
Let \ts $x,y\in X$ \ts be distinct elements of a finite poset \ts $P=(X,\prec)$.
Denote by \ts $\aFr(k)$ \ts the number of linear extensions \ts $L\in \Ec(P)$,
such that \ts $L(y)-L(x)=k$.  Then:
\begin{equation}\label{eq:KS-ineq}
\aFr(k)^2 \,\. \ge \,\. \aFr(k-1) \,\. \aFr(k+1) \quad \text{for all} \quad k\. > \. 1\ts.
\end{equation}
\end{thm}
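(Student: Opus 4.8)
The plan is to recall the geometric proof of Kahn and Saks~\cite{KS}, which extends Stanley's argument for Theorem~\ref{t:Sta}: it reduces Theorem~\ref{t:KS} to an instance of the Alexandrov--Fenchel inequality for mixed volumes of convex polytopes. Write $n=|X|$. Replacing the ordered pair $(x,y)$ by $(y,x)$ turns $\aFr(k)$ into $\aFr(-k)$, and \eqref{eq:KS-ineq} is symmetric under this, so it suffices to treat $k\ge 2$. Let
\[
\mathcal{O}(P)\ =\ \bigl\{\,f\colon X\to[0,1]\ :\ f(u)\le f(v)\ \text{whenever}\ u\prec v\,\bigr\}
\]
be the order polytope of $P$; it has volume $e(P)/n!$ and its canonical triangulation contributes one simplex of volume $1/n!$ per linear extension. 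Ignoring the measure-zero locus of ties, $\aFr(k)/n!$ equals the volume of the region $R_k:=\{\,f\in\mathcal{O}(P):\#\{u\in X:f(x)<f(u)<f(y)\}=k-1\,\}$, the locus on which the value $f(y)$ outranks $f(x)$ by exactly $k$.

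Next I would turn $\vol(R_k)$ into a mixed volume. Parametrize by $s=f(x)$, $t=f(y)$ and integrate over $0\le s\le t\le 1$; the fibre of $R_k$ over $(s,t)$ decomposes according to which elements of $X\setminus\{x,y\}$ receive a value in $[0,s]$, in $[s,t]$, or in $[t,1]$. The constraints from $\prec$ force each such configuration to split as a product of rescaled order polytopes of the three induced subposets, and the rescaling of the ``middle'' block by the factor $(t-s)$, applied $k-1$ times, is what makes the dependence on $k$ affine. Assembling the pieces yields convex bodies $A,B,C_1,\dots,C_r$ (built from the order polytopes of $P_{\prec x}$, of $P_{\succ y}$, of the interval part of $P$, and of the part incomparable to both $x$ and $y$, plus auxiliary segments recording the free choices) and a constant $c$ independent of $k$ with
\[
\aFr(k)\ =\ c\cdot V\bigl(\underbrace{A,\dots,A}_{k-1},\ \underbrace{B,\dots,B}_{n-1-k},\ C_1,\dots,C_r\bigr).
\]
Applying the Alexandrov--Fenchel inequality to three consecutive members of this one-parameter family of mixed volumes gives $\aFr(k)^2\ge \aFr(k-1)\aFr(k+1)$ for $2\le k\le n-1$; for $k\ge n$ the right-hand side vanishes since $|L(y)-L(x)|\le n-1$, so \eqref{eq:KS-ineq} holds for all $k>1$. (Positivity of mixed volumes also shows $\aFr(k)\ge 0$, and the inequality itself forbids interior zeros of $\aFr$.)

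The hard step is the middle one: producing $A$ and $B$ so that the number of copies of $A$ is genuinely affine in $k$. The real obstruction is caused by elements of $X$ incomparable to both $x$ and $y$: such an element may lie below $f(x)$, strictly between $f(x)$ and $f(y)$, or above $f(y)$, so it does not belong to a single block of the configuration decomposition, and one cannot condition on its location without destroying log-concavity. Following Kahn and Saks, the remedy is to enlarge the ambient space, adjoining to $\mathcal{O}(P)$ auxiliary segments that absorb these extra degrees of freedom; the technical core is then to verify that the resulting volume polynomial $\vol\bigl(\lambda A+\mu B+\sum_i \nu_i C_i\bigr)$ reproduces $\aFr(k)$ in the normalization above. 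The remaining steps --- the reduction to $k\ge 2$, the identification $\aFr(k)/n!=\vol(R_k)$, and the boundary cases --- are routine.
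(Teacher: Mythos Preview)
Your proposal sketches the original geometric argument of Kahn and Saks via the Alexandrov--Fenchel inequality. As an outline it is correct, and you are honest that the central step --- the actual construction of the bodies $A,B,C_1,\dots,C_r$ so that $\aFr(k)$ becomes a mixed volume with exactly $k-1$ copies of~$A$ --- is not carried out. That step is the whole content of the Kahn--Saks paper, and your description of the obstruction (elements incomparable to both $x$ and $y$) and of the remedy (auxiliary coordinates) is accurate but does not substitute for the construction. So the proposal is a faithful roadmap of~\cite{KS}, not an independent proof.

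More importantly, the paper does \emph{not} give its own proof of Theorem~\ref{t:KS} in this generality; it simply cites~\cite{KS}. What the paper proves is the special case of width two with $x,y$ in the same chain, and it does so by an entirely different route: linear extensions are encoded as NE lattice paths in the region $\Reg(P)$ (Lemma~\ref{l:interpretation lattice path}), the difference $\aFr(k)^2-\aFr(k-1)\aFr(k+1)$ is decomposed into sums $S(u;w)$ indexed by pairs of $x$-values (equation~\eqref{eq:KS-diff-sums}), and each $S(u;w)$ is shown to be nonnegative via explicit path injections --- Lemma~\ref{l:lattice path bijection 1}, the criss-cross Lemma~\ref{l:path-averages}, and the Lindstr\"om--Gessel--Viennot-type Lemma~\ref{l: lgv}. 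No mixed volumes appear anywhere.

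The trade-off is clear. Your geometric route (i.e.\ the original one) works for arbitrary posets but is opaque about equality and gives no $q$-deformation. The paper's combinatorial route only handles width two with $x,y$ in the same chain, but because the injections are explicit and weight-preserving it yields the $q$-analogue (Theorem~\ref{t:q-KS}), the multivariate version (Theorem~\ref{t:bq-KS}), and the equality conditions (Theorem~\ref{t:q-KS-equality}) --- none of which are accessible from the Alexandrov--Fenchel side.
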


\medskip

Note that the Stanley inequality follows from the Kahn--Saks inequality by
adding the maximal element~$\ts\wh 1$ \ts to the poset~$P$, and
letting \ts $y \gets \wh 1$.

\medskip

\subsection{The $q$-analogues}\label{ss:intro-q-analogues}
From this point on, we consider only posets $P$ of \defn{width two}.  Fix a
partition of~$P$ into two chains \ts $\Cr_1,\Cr_2 \ssu X$,
where \ts $\Cr_1 \cap \Cr_2=\emp$.  Let \ts
$\Cr_1 = \{ \al_1,\ldots, \al_{\ana}\}$ \ts and \ts
$\Cr_2=\{\beta_1,\ldots,\beta_{\bnb}\}$ \ts be these chains
of lengths $\ana$ and $\bnb$, respectively.
The \defn{weight} of a linear extension $L\in \Ec(P)$ is defined in~\cite{CPP2} as
\begin{equation}\label{eq:def-weight}
 \wgt(L) \ := \ \sum_{i=1}^\ana \,  L(\al_i)\..
\end{equation}
Note that the definition of the weight \ts $\wgt(L)$ \ts depends on the chain
partition \ts $(\Cr_1,\Cr_2)$.  We can now state our first two results.

\medskip

\begin{thm}[{\rm \defn{$q$--Stanley inequality}}]\label{t:q-Sta}
Let \ts  $P=(X,\prec)$ \ts be a finite poset of width two, let \ts $x\in X$, and
let \ts $(\Cr_1,\Cr_2)$ \ts be the chain partition as above.
Define
$$
\aNr_q(k) \, := \, \sum_{L\in \Ec(P) \ : \ L(x)=k} \, q^{\wgt(L)}\..
$$
Then:
\begin{equation}\label{eq:q-Sta-ineq}
\aNr_q(k)^2 \ \geqslant \ \aNr_q(k-1) \, \aNr_q(k+1) \quad \ \text{ for all} \ \quad k\.>\.1 \ts,
\end{equation}
where the inequality between polynomials in \. $q$ \. is coefficient-wise.
\end{thm}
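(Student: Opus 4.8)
The strategy is to realize linear extensions of a width-two poset $P$ as lattice paths and then exhibit an explicit injection that witnesses the $q$-log-concavity. Fix the chain partition $(\Cr_1,\Cr_2)$ with $\Cr_1=\{\al_1,\ldots,\al_\ana\}$ and $\Cr_2=\{\be_1,\ldots,\be_\bnb\}$. A linear extension $L\in\Ec(P)$ is determined by the interleaving pattern of the $\al$'s and $\be$'s, which we encode as a monotone lattice path from $(0,0)$ to $(\ana,\bnb)$ using a unit right-step for each $\al_i$ placed and a unit up-step for each $\be_j$; the poset relations between $\al_i$ and $\be_j$ translate into the requirement that the path stay inside a ``staircase region'' $\Reg(P)$ bounded by two monotone staircases (the lower one forced by relations $\al_i\prec\be_j$, the upper one by $\be_j\prec\al_i$). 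Under this bijection, the weight $\wgt(L)=\sum_i L(\al_i)$ records, for each right-step, its position in the word reading order, so $q^{\wgt(L)}$ is a monomial whose exponent is a linear functional of the path (a sum of $x$-or-$y$ coordinates of the corners), and in particular $\wgt$ changes in a controlled way under a local push of the path. The first step of the write-up is to make this dictionary precise and to record how fixing $L(x)=k$ for the distinguished element $x$ (say $x=\al_p$ or $x=\be_p$ lying in one of the chains) cuts the path region at a fixed ``anti-diagonal'' $i+j=k$ and pins which step occurs at height $k$.

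**The injection.** With the path model in place, the heart of the argument is to construct, for each $k>1$, an injection
\[
\Phi\colon\ \Nc(k-1)\times\Nc(k+1)\ \hookrightarrow\ \Nc(k)\times\Nc(k),
\]
where $\Nc(j)$ is the set of pairs (path for $P$ with the $x$-step at anti-diagonal $j$), such that $\Phi$ is weight-preserving in the sense $\wgt(P_1)+\wgt(P_2)=\wgt(\Phi(P_1))+\wgt(\Phi(P_2))$. This is exactly the shape of argument that gives the coefficient-wise polynomial inequality $\aNr_q(k)^2\ge \aNr_q(k-1)\aNr_q(k+1)$: summing $q^{\wgt(P_1)+\wgt(P_2)}$ over the domain is dominated term-by-term by the corresponding sum over the codomain. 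The construction proceeds by overlaying the two paths $P_1$ (with $x$-step on anti-diagonal $k-1$) and $P_2$ (with $x$-step on $k+1$): between these two anti-diagonals the paths ``cross'' in a forced way, and one performs a surgery that swaps the two tails past the first crossing point, adjusting near the $x$-step so that both output paths have their $x$-step on anti-diagonal $k$. The bookkeeping that the swap lands inside $\Reg(P)$ for both outputs uses that $\Reg(P)$ is an intersection of up-sets and down-sets (a ``staircase convexity''), so that swapping suffixes of two admissible paths after a crossing keeps both admissible; that $\wgt$ is additively preserved follows because the swap moves a block of right-steps from one path-word to the other in a position-symmetric way. I expect to recover the Chung--Fishburn--Graham injection as the degenerate case where $x$ is the added top element, and to reuse verbatim much of the corner-tracking machinery of~\cite{CPP2}.

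**The main obstacle.** The genuinely delicate part is handling the interaction between the two constraints simultaneously: the path must (a) stay in $\Reg(P)$, and (b) have its $x$-step on a prescribed anti-diagonal, and the surgery that fixes (b) for both outputs must not be defeated by (a). When $x$ lies on, say, $\Cr_1$, pinning $L(x)=k$ forces the $p$-th right-step to occur at word-position $k$, i.e. the path passes through a specific lattice point determined by $p$ and $k$; moving from $k\pm1$ to $k$ shifts that forced point diagonally, and one must show a valid ``re-routing'' exists and is reversible. The clean way to organize this is to first establish an intermediate \emph{local} lemma: for paths pinned at adjacent anti-diagonals, there is a canonical first place where a $P_1$-type path and a $P_2$-type path separate, and swapping there defines a bijection onto the analogously-pinned pairs at the middle value, compatible with $\Reg(P)$ because of staircase convexity. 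Once that lemma is isolated, the theorem follows by applying it and checking the weight identity; so the obstacle is really concentrated in proving that local lemma with all the case analysis (whether $x$ is on $\Cr_1$ or $\Cr_2$, and whether the crossing happens above or below the $x$-step), which is where the bulk of the technical work from~\cite{CPP2} gets adapted.
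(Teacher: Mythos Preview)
Your lattice-path framework and the plan to build a weight-preserving injection on pairs are exactly right, and match the paper's setup. But the specific surgery you describe has a real gap. With $x=\al_r\in\Cr_1$, a path counted by $\aN_q(j)$ takes its $r$-th East step at height $j-r$, so a $(k{-}1)$-path $P_1$ and a $(k{+}1)$-path $P_2$ sit at heights $k{-}1{-}r$ and $k{+}1{-}r$ at column~$r$: near there $P_2$ is strictly \emph{above} $P_1$. The two full paths share endpoints $\zero$ and $(\ana,\bnb)$ but need not cross anywhere in between---$P_2$ can lie weakly above $P_1$ throughout---so ``swap tails past the first crossing'' may be vacuous. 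And even when they do meet, a Lindstr\"om--Gessel--Viennot tail-swap leaves the $r$-th East steps at their original heights $k{\pm}1{-}r$, not at the target height $k{-}r$; your phrase ``adjusting near the $x$-step'' is hiding exactly the missing mechanism.

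The paper supplies that mechanism by first factoring at the pinned step and then using a \emph{vertical translation} to force a crossing. Since the $r$-th East step is fixed, $\aN_q(j)$ factors (up to a power of~$q$) as $\aK_q\bigl(\zero,(r{-}1,j{-}r)\bigr)\cdot\aK_q\bigl((r,j{-}r),(\ana,\bnb)\bigr)$, and the log-concavity reduces to two \emph{independent} inequalities, one on the ``before'' halves and one on the ``after'' halves. Each is a direct instance of Lemma~8.2 of~\cite{CPP2} (Lemma~\ref{l:lattice path bijection 1} here): translate one of the two half-paths by $\etwo$ so that its endpoint now straddles that of the other, which \emph{forces} an intersection; swap there; translate the relevant piece back. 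The up/down shifts cancel, so the total $q$-weight is preserved, and row/column convexity of $\Reg(P)$ keeps both outputs inside the region. With this factorization the case analysis you flag as the ``main obstacle'' disappears---the two sides of the pinned step never interact---and the whole proof becomes a two-line application of the \cite{CPP2} lemma rather than a new construction.
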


\smallskip

The following result is a generalization, sice we can always assume that element
\ts $y = \wh 1$ \ts is in the same chain as element~$x$.

\smallskip

\begin{thm}[{\rm \defn{$q$--Kahn--Saks inequality}}]\label{t:q-KS}
Let \ts $x,y\in X$ \ts be distinct elements of a finite poset \ts $P=(X,\prec)$ \ts
of width two.
Suppose that either \. $x,y \in \Cr_1$\ts, or \. $x,y \in \Cr_2$.
  Define:
$$
\aFr_q(k) \ := \ \sum_{L\in \Ec(P) \ : \ L(y)-L(x)=k} \, q^{\wgt(L)}\..
$$
Then:
\begin{equation}\label{eq:q-KS-ineq}
\aFr_q(k)^2 \, \geqslant \, \aFr_q(k-1) \. \aFr_q(k+1) \quad \ \text{for all} \ \quad k\. > \. 1\ts,
\end{equation}
where the inequality between polynomials in \ts $q$ \ts is coefficient-wise.
\end{thm}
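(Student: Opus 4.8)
The plan is to prove the inequality \eqref{eq:q-KS-ineq} by an explicit, area-preserving injection between sets of lattice paths, building on the combinatorics behind the $q$--Stanley inequality (\Cref{t:q-Sta}) and extending it from one marked element to two marked elements lying in the same chain. First I would set up the lattice path model, as in \cite{CFG,CPP2}. Assume without loss of generality that $x=\al_s$ and $y=\al_t$ with $s<t$ (the case $x,y\in\Cr_2$ is symmetric). A linear extension $L\in\Ec(P)$ then corresponds bijectively to a monotone north/east lattice path $\pi=\pi(L)$ from $(0,0)$ to $(\ana,\bnb)$ that stays inside the region $\Lambda=\Lambda(P,\Cr_1,\Cr_2)$ cut out by the cross-relations of $P$ (a region bounded by two monotone staircases), where an east step in column $i$ records placing $\al_i$ and a north step in row $j$ records placing $\be_j$. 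Writing $h_i(\pi)$ for the height of the $i$-th east step of $\pi$, one has $L(\al_i)=i+h_i(\pi)$, and hence
\begin{equation*}
\wgt(L)\,=\,\tbinom{\ana+1}{2}+\area(\pi),\qquad\quad L(y)-L(x)\,=\,(t-s)+\bigl(h_t(\pi)-h_s(\pi)\bigr),
\end{equation*}
where $\area(\pi)$ counts the unit cells of $[0,\ana]\times[0,\bnb]$ lying below $\pi$.

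Setting $\ell:=k-(t-s)$ and $\Pc_j:=\{\pi\subseteq\Lambda:h_t(\pi)-h_s(\pi)=j\}$, we get $\aFr_q(k)=q^{\binom{\ana+1}{2}}\sum_{\pi\in\Pc_\ell}q^{\area(\pi)}$, so \eqref{eq:q-KS-ineq} becomes equivalent to producing an \emph{area-additive} injection $\Phi\colon\Pc_{\ell-1}\times\Pc_{\ell+1}\hookrightarrow\Pc_{\ell}\times\Pc_{\ell}$, i.e.\ one with $\area(\mu)+\area(\mu')=\area(\pi)+\area(\pi')$ whenever $\Phi(\pi,\pi')=(\mu,\mu')$. (When $\Pc_\ell=\emptyset$ the inequality is vacuous, and one checks that the set of attainable values of $h_t-h_s$ is an interval, so there are no internal zeros.) To build $\Phi$, I would cut each path into a prefix ending with its $s$-th east step (at height $h_s$), a middle segment running through its $t$-th east step (at height $h_t$), and a suffix; only the rise $h_t-h_s$ of the middle segment is constrained. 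For a pair $(\pi,\pi')\in\Pc_{\ell-1}\times\Pc_{\ell+1}$ the two middle segments have rises $\ell-1$ and $\ell+1$, so $\Phi$ must transfer one unit of rise from the middle of $\pi'$ to the middle of $\pi$. I would do this by a boundary-sensitive parenthesis-matching, in the style of the $q$-log-concavity injection for Gaussian binomials (and recovering the CFG injection as a special case): superimpose the two middle segments after a vertical alignment, locate the first column where one lies ``too far above'' the other, perform there an area-neutral surgery that lowers the rise of one segment by one and raises that of the other by one, and then reattach the prefixes and suffixes after a compensating unit vertical shift of one suffix, so that both new paths lie in $\Lambda$ with total area unchanged. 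The ``first bad column'' rule is what makes $\Phi$ invertible on its image.

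The hard part — and the reason this is substantially more technical than the $q$--Stanley case — is that the constraint now involves two east steps and a whole strip of columns rather than a single one. The natural decomposition writes $\aFr_q(k)$ as a \emph{sum over the common height $h=h_s$} of products of a Gaussian-binomial-type prefix factor, a middle-segment factor, and a Gaussian-binomial-type suffix factor (shifted by $\ell$); the prefix and suffix factors are exactly of the kind whose $q$-log-concavity underlies the proof of \Cref{t:q-Sta}, but log-concavity of the whole $h$-sum, which carries the new middle factor, does not follow term by term. Reconciling the transfer of rise against \emph{both} staircase boundaries of $\Lambda$ across the strip, and against the shift applied to the suffix, forces one to isolate the configurations in which the surgery would leave $\Lambda$ and match them by a separate injection — precisely as the ``forbidden'' cases are handled in \cite{CPP2} — and showing that the primary surgery, the boundary-repair cases, and this secondary matching assemble into a single globally injective, area-preserving map is where essentially all the work lies. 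Once the injection is in place, the remaining cases ($x,y\in\Cr_2$, or $k$ small enough that $\Pc_\ell=\emptyset$) are symmetric or trivial, and specializing $q=1$ recovers the Kahn--Saks inequality \eqref{eq:KS-ineq} for posets of width two with both elements in the same chain.
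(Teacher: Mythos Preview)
Your lattice-path setup and the reduction to an area-preserving injection \ $\Pc_{\ell-1}\times\Pc_{\ell+1}\hookrightarrow\Pc_\ell\times\Pc_\ell$ \ are correct, and you correctly identify that the difficulty is that the $h_s$-refined terms are not individually log-concave. But what follows is a plan, not a proof: the ``parenthesis-matching'' surgery and the ``first bad column'' rule are never actually specified, and you yourself concede that assembling the primary surgery, the boundary repairs, and the secondary matching into a single injective map ``is where essentially all the work lies.'' That work is absent. In particular, there is no reason to believe a single column-by-column matching rule of Gaussian-binomial type survives the two-sided staircase constraint of $\Lambda$ across the entire strip between columns $s$ and $t$; this is exactly where naive extensions of the CFG injection break down.

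The paper does something quite different and much more concrete. It refines not just by $h_s$ but by the \emph{pair} of values $(L(x),L'(x))=(u,w)$ taken in the two linear extensions, and regroups the difference $\aFr_q(k)^2-\aFr_q(k-1)\aFr_q(k+1)$ as $\sum_{u>w-1}S(u;w)+\tfrac12\sum_{u=w-1}S(u;w)$, where each $S(u;w)$ is a four-term alternating sum of products of path-counting polynomials $\aKr_q(\cdot,\cdot)$ between eight explicit lattice points. For $u>w-1$, an algebraic identity rewrites $S(u;w)$ as a sum of three nonnegative products, each nonnegative by either Lemma~\ref{l:lattice path bijection 1} (the basic translate-and-swap injection) or by the new \emph{criss-cross lemma} (Lemma~\ref{l:path-averages}), which handles a genuine ``second-difference'' term $\Delta_2$ that has no analogue in the Stanley case. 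For $u=w-1$, the term $\tfrac12 S(u;u+1)$ is nonnegative by the Lindstr\"om--Gessel--Viennot lemma (Lemma~\ref{l: lgv}). So the injection is not one global map but a composite of several small, explicit injections applied to disjoint pieces of the decomposition; the criss-cross lemma is the new ingredient you are missing.
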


\smallskip

In Section~\ref{sec:multi}, we give a multivariate generalization of both theorems.
Note that the assumption that $x$ and $y$ belong to the same chain in the partition
\ts $(\Cr_1,\Cr_2)$ \ts  are necessary for the conclusion of  Theorem~\ref{t:q-KS} to hold, as shown in the next example.

\smallskip

\begin{ex}\label{ex:Ramon}
Let \. $P= C_3 + C_3$ \. be the disjoint sum of two chains with three elements.
Denote these chains by  \ts $\Cr_1:=\{\al_1,\al_2,\al_3\}$ \ts
  and \ts $\Cr_2:=\{\be_1,\be_2,\be_3\}$. For elements \. $x= \al_1$ \. and \. $y=\be_3$,
we have:
$$\aF_q(1) \, = \,  q^{14}\., \quad  \aF_q(2) \, = \, 2 \ts q^{13}  \quad \text{and} \quad \aF_q(3)\, = \, 3 \ts q^{12} \. + \. q^{11}\ts.
$$
We conclude:
\begin{align*}
\aF_q(2)^2  \, - \,  \aF_q(1) \, \aF_q(3)
 \ = \  q^{26} \. - \. q^{25}   \ \not \geqslant  \ 0\ts.\end{align*}
\end{ex}

\medskip

\subsection{Equality conditions}\label{ss:intro-equality}
Let $x= \al_r\in \Cr_1$.  We say that $x$ satisfies a \ts \defn{$k$-pentagon property} \ts if
$$
\al_{r-1} \, \prec \, \beta_{k-r} \, \prec \, \beta_{k-r+1} \, \prec \, \al_{r+1} \qquad
\text{and} \qquad  \al_r \.||\. \beta_{k-r} \ \,, \ \  \al_r \.||\. \beta_{k-r+1}\ \,,
$$
where \ts $u\.||\. v$ \ts denotes \emph{incomparable elements} \ts $u,v\in X$.
In other words, the  subposet of~$P$ restricted to
$$
\{\al_{r-1}, \al_{r}, \al_{r+1}, \beta_{k-r}, \beta_{k-r+1}\}
$$
has a pentagonal Hasse diagram, see Figure~\ref{fig:pentagon}.
For  \ts $x=\beta_r\in \Cr_2$ \ts the $k$-pentagon property is defined analogously.

\begin{figure}[hbt]
\begin{center}
\includegraphics[width=3.2cm]{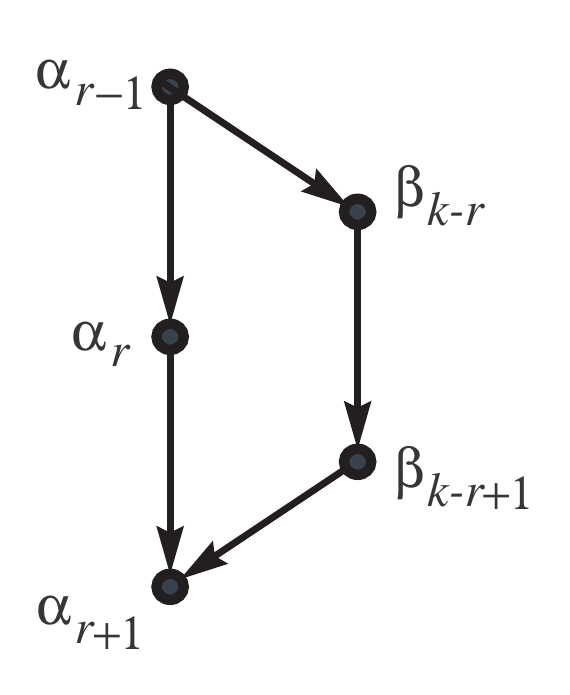}
\end{center}
\caption{The $k$-pentagon property for \ts $x=\al_r\in \Cr_1$.
The arrows point from smaller to larger elements in the poset.}\label{fig:pentagon}
\end{figure}

\medskip

\begin{thm}[{\rm \defn{Equality condition for the $q$-Stanley inequality},
cf.~Theorem~\ref{t:Sta-equality-gen}}]\label{t:q-Sta-equality}
Let \ts  $P=(X,\prec)$ \ts be a finite poset of width two.  Fix \ts $x\in X$,
and let \ts $\aNr(k)$, $\aNr_q(k)$ \ts be defined as above.
Suppose that \ts $k \in \{1,\ldots,n-1\}$ \ts and  \ts $\aNr(k)>0$.  Then the following are equivalent:
\begin{enumerate}
			[{label=\textnormal{({\alph*})},
		ref=\textnormal{\alph*}}]
\item \label{itemequality Stanley a} \ $\aNr(k)^2 \, = \, \aNr(k-1) \, \aNr(k+1)$,
\item \label{itemequality Stanley b} \ $\aNr(k) \, = \, \aNr(k+1) \. = \. \aNr(k-1)$,
\item \label{itemequality Stanley c} \ $\aNr_q(k)^2 \, = \, \aNr_q(k-1) \, \aNr_q(k+1)$,
\item \label{itemequality Stanley d} \ $\aNr_q(k)\, = \,  q^\ve \ts\aNr_q(k-1)\, = \,  q^{-\ve}\ts \aNr_q(k+1)$, \.
where \ts $\ve=1$ \ts for \ts $x \in \Cr_1$ \ts  and \ts $\ve=-1$ \ts for \ts  $x \in \Cr_2$\ts,
\item  \label{itemequality Stanley e} \ element $x$ satisfies \ts $k$-pentagon property.
\end{enumerate}
\end{thm}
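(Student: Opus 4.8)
The plan is to reduce the cycle $(a)$--$(e)$ to two substantive implications, $(e)\Rightarrow(d)$ and $(a)\Rightarrow(e)$; everything else is formal. Indeed $(d)\Rightarrow(c)$ follows by substituting the two expressions of $(d)$ into the two sides of $(c)$; $(d)\Rightarrow(b)$ and $(c)\Rightarrow(a)$ follow by setting $q=1$; and $(b)\Rightarrow(a)$ is immediate. For $(a)\Rightarrow(c)$ I would use Theorem~\ref{t:q-Sta}: the polynomial $\aNr_q(k)^2-\aNr_q(k-1)\aNr_q(k+1)$ has nonnegative coefficients, and $(a)$ forces their sum to be $0$, so it vanishes identically. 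I also record that $(a)$ together with $\aNr(k)>0$ forces $\aNr(k-1),\aNr(k+1)>0$. Both remaining implications are proved in the lattice-path model behind Theorem~\ref{t:q-Sta}: identify $L\in\Ec(P)$ with the monotone path $\gamma_L$ from $(0,0)$ to $(\ana,\bnb)$ recording the interleaving of $\Cr_1$ with $\Cr_2$, so that the admissible paths are exactly those avoiding the staircase regions carved out by the cross relations of $P$, and $\wgt(L)=\binom{\ana+1}{2}+\area(\gamma_L)$, where $\area$ counts the unit cells below $\gamma_L$. For $x=\al_r$ the event $L(x)=k$ says $\gamma_L$ uses the horizontal edge $\eb_k$ from $(r-1,k-r)$ to $(r,k-r)$, while for $x=\be_r$ it says $\gamma_L$ uses the vertical edge from $(k-r,r-1)$ to $(k-r,r)$; since $\wgt$ records only the positions of the $\Cr_1$-elements, passing between the three neighbouring such edges changes $\wgt$ by $\pm1$, the sign depending on whether $x\in\Cr_1$ or $x\in\Cr_2$ --- this is the sign $\ve$ of $(d)$.

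For $(e)\Rightarrow(d)$, take $x=\al_r\in\Cr_1$ with the $k$-pentagon property (the case $x\in\Cr_2$ is symmetric with the area changes reversed, so $\ve=-1$; when $r\in\{1,\ana\}$ or $k-r\in\{0,\bnb\}$ one reads the missing outer vertices of the pentagon as $\wh0,\wh1$ in the augmented poset, and there both $(d)$ and $(e)$ fail since then one of $\aNr(k\pm1)$ vanishes). Put $A=(r-1,k-r-1)$, $B=(r,k-r+1)$, and let $\sq$ be the $1\times 2$ box $[r-1,r]\times[k-r-1,k-r+1]$. Using the relations $\al_{r-1}\prec\be_{k-r}$ and $\be_{k-r+1}\prec\al_{r+1}$, one checks that every admissible path counted by $\aNr(k-1)$, $\aNr(k)$ or $\aNr(k+1)$ enters $\sq$ at $A$ and exits at $B$: any other way of crossing the boundary of $\sq$ would carry the path into a region forbidden by one of these two relations. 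There are exactly three monotone $A$-to-$B$ paths inside $\sq$, according to whether the single horizontal step comes first, in the middle, or last, and these are precisely the local shapes forced by $L(\al_r)=k-1,k,k+1$. The incomparabilities $\al_r\,||\,\be_{k-r}$ and $\al_r\,||\,\be_{k-r+1}$ ensure that replacing the in-$\sq$ portion of a path by either of the other two local shapes keeps it admissible, since the only order relations affected are those of $\al_r$ against $\be_{k-r}$ and $\be_{k-r+1}$, which are unconstrained. These replacements are mutually inverse bijections among the three families, and since $\al_r$'s horizontal step sits at heights $k-r-1,k-r,k-r+1$ in the three cases, each bijection shifts $\wgt$ by exactly $1$; this is $(d)$ with $\ve=1$.

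For $(a)\Rightarrow(e)$, by $(a)\Rightarrow(c)$ the weight-preserving injection $\Phi$ constructed in the proof of Theorem~\ref{t:q-Sta}, from pairs of paths counted by $\aNr_q(k-1)\aNr_q(k+1)$ into pairs counted by $\aNr_q(k)^2$, must be a bijection. I would prove the contrapositive: a failure of the $k$-pentagon property contradicts $(a)$. Take $x=\al_r$. If $k-r<0$ or $k-r+1>\bnb$, then one of $\aNr(k-1),\aNr(k+1)$ vanishes while $\aNr(k)>0$, against $(a)$. Otherwise, run through the five pentagon conditions with short direct checks on the admissible values of $L(\al_r)$: each of $\al_r\prec\be_{k-r}$ and $\be_{k-r+1}\prec\al_r$ forces $\aNr(k)=0$, excluded by hypothesis; $\be_{k-r}\prec\al_r$ (in particular $\be_{k-r}\prec\al_{r-1}$) forces $\aNr(k-1)=0$; and $\al_r\prec\be_{k-r+1}$ (in particular $\al_{r+1}\prec\be_{k-r+1}$) forces $\aNr(k+1)=0$; the last two contradict $(a)$. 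Hence $(a)$ already gives $\al_r\,||\,\be_{k-r}$, $\al_r\,||\,\be_{k-r+1}$, and rules out the "wrong-direction" comparabilities $\be_{k-r}\prec\al_{r-1}$ and $\al_{r+1}\prec\be_{k-r+1}$, so the only way the pentagon can still fail is $\al_{r-1}\,||\,\be_{k-r}$ or $\al_{r+1}\,||\,\be_{k-r+1}$. In either case the extra incomparability leaves room for an admissible path counted by $\aNr(k)$ that detours through $(r-2,k-r)$ (resp.\ $(r+1,k-r)$), and I would use this to exhibit a pair of $\aNr(k)$-paths lying outside the image of $\Phi$, contradicting surjectivity and hence $(a)$.

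The hard part will be this last step: making precise the image of $\Phi$ and certifying that it omits such a pair exactly when $\al_{r-1}\,||\,\be_{k-r}$ or $\al_{r+1}\,||\,\be_{k-r+1}$. Since $\Phi$ is the technical core of Theorem~\ref{t:q-Sta}, built along the lines of~\cite{CPP2}, this amounts to tracking, through each branch of its construction, which target pairs have no preimage, and showing the union of these omitted pairs is empty precisely under the $k$-pentagon property. I expect the bulk of the work to lie in (i) describing the image of $\Phi$ in terms of the local shape of the staircase near $\eb_{k-1},\eb_k,\eb_{k+1}$, (ii) checking that the detours used to build the omitted pairs respect all staircase constraints, and (iii) the boundary cases $r\in\{1,\ana\}$, $k-r\in\{0,\bnb\}$, handled via the artificial extrema $\wh0,\wh1$.
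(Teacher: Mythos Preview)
Your overall cycle of implications matches the paper's, and your $(e)\Rightarrow(d)$ argument is correct and in fact slightly more direct than the paper's route (the paper proves $(e)\Rightarrow(b)$ by the same local swap and then closes the loop through the other implications; you get the $q$-refinement immediately). Your reduction $(a)\Rightarrow(c)$ via nonnegativity of the coefficients in Theorem~\ref{t:q-Sta} is also fine and is used implicitly in the paper.

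The genuine gap is in $(a)\Rightarrow(e)$. Your casework correctly establishes the two incomparabilities $\al_r\,||\,\be_{k-r}$, $\al_r\,||\,\be_{k-r+1}$ and rules out the wrong-direction comparabilities, leaving only the possibilities $\al_{r-1}\,||\,\be_{k-r}$ or $\al_{r+1}\,||\,\be_{k-r+1}$. But what you then propose---tracking branch-by-branch through the construction of $\Phi$ to locate an omitted pair---is exactly what the paper packages once and for all as the Equality Lemma (Lemma~\ref{l:bijection 1 equality}). That lemma says that equality in Lemma~\ref{l:lattice path bijection 1} (with the strict inequality $a_2'-b_2>c_2-d_2$, which holds here since $C=D$) forces the segment $AB$ to lie on the boundary of $\Reg(P)$. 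In the proof of Theorem~\ref{t:q-Sta} there are two such applications; under $(a)$ both are equalities, and Lemma~\ref{l:bijection 1 equality} then forces the vertical segment $A_1B_1$ (on the line $x=r-1$, from height $k-r-1$ to $k-r+1$) onto the \emph{upper} boundary of $\Reg(P)$ and the segment $AB$ (on the line $x=r$) onto the \emph{lower} boundary. Translating ``$A_1B_1$ on the upper boundary'' to the poset gives precisely $\al_{r-1}\prec\be_{k-r}$, and ``$AB$ on the lower boundary'' gives $\be_{k-r+1}\prec\al_{r+1}$. This replaces your steps (i)--(iii) entirely; no separate analysis of detours or of the image of $\Phi$ is needed.

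A minor correction: your parenthetical that ``when $r\in\{1,\ana\}$ or $k-r\in\{0,\bnb\}$ \ldots\ both $(d)$ and $(e)$ fail since one of $\aNr(k\pm1)$ vanishes'' is not right. For instance, take $\Cr_1=\{\al_1\}$, $\Cr_2=\{\be_1\prec\be_2\prec\be_3\}$ with $\al_1$ incomparable to all $\be_i$, and $k=2$: here $r=\ana=1$, yet $\aNr(1)=\aNr(2)=\aNr(3)=1$ and the $k$-pentagon property holds (with the missing $\al_0,\al_2$ read as $\wh0,\wh1$). The correct handling is simply to interpret $\al_0:=\wh0$ and $\al_{\ana+1}:=\wh1$ throughout; the lattice-path argument and the Equality Lemma go through unchanged, since the corresponding boundary segments of $\Reg(P)$ are just pieces of the bounding rectangle $[0,\ana]\times[0,\bnb]$.
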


\smallskip

The equivalence \. (a)~$\Leftrightarrow$~(b) was recently proved by Shenfeld
and van Handel~\cite{SvH} for general posets via a condition implying~(e),
see Theorem~\ref{t:Sta-equality-gen} and the discussion that follows.
Conditions~(c) and~(d) are specific to posets of width two.
The following result is a generalization of Theorem~\ref{t:q-Sta-equality} and
the main result of the paper:

\medskip

\begin{thm}[{\rm \defn{Equality condition for the $q$-Kahn--Saks inequality}}]\label{t:q-KS-equality}
Let \ts $x,y\in X$ \ts be distinct elements of a finite poset \ts $P=(X,\prec)$ \ts of width two.
Let \ts $\aFr(k)$, $\aFr_q(k)$ \ts be defined as above.
Suppose that either $x,y \in \Cr_1$ or $x,y \in \Cr_2$.
Also suppose that \ts $k \in \{2,\ldots,n-2\}$ \ts and
 \ts  $\aFr(k)>0$. Then the following are equivalent:
\begin{enumerate}
		[{label=\textnormal{({\alph*})},
		ref=\textnormal{\alph*}}]
\item \label{itemequality KS}
\ $\aFr(k)^2 \, = \, \aFr(k-1)\,\aFr(k+1)$,
\item \label{itemequality linear}
\ $\aFr(k) \, = \,\aFr(k+1) \, = \, \aFr(k-1)$,
\item \label{itemequality q-KS}
\ $\aFr_q(k)^2 \, = \, \aFr_q(k-1)\, \aFr_q(k+1)$,
\item \label{itemequality q-linear}
\ $\aFr_q(k) \, = q^{\ve}\ts \aFr_q(k-1)\, = \, q^{-\ve}\ts\aFr_q(k+1)$, for some \. $\ve \in \{\pm 1\}$,
\item \label{itemequality combinatorial} there is an element \ts $z\in \{x,y\}$, such that
for every \ts $L\in \Ec(P)$ \ts for which $L(y)-L(x)=k$, \\
there are elements \ts $u,v \in X$ \ts which satisfy \. $u\. || \.z$, \. $v\. || \. z$, and \.
$L(u)+1=L(z) = L(v)-1$.
\end{enumerate}
\end{thm}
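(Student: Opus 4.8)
The plan is to establish the cycle of implications
$\ref{itemequality q-KS}\Rightarrow\ref{itemequality combinatorial}\Rightarrow\ref{itemequality q-linear}\Rightarrow\ref{itemequality linear}\Rightarrow\ref{itemequality KS}\Rightarrow\ref{itemequality q-KS}$,
of which the last three steps are soft. Indeed, $\ref{itemequality q-linear}\Rightarrow\ref{itemequality linear}$ follows by setting $q=1$; $\ref{itemequality linear}\Rightarrow\ref{itemequality KS}$ is immediate since then $\aFr(k-1)=\aFr(k)=\aFr(k+1)$; and $\ref{itemequality KS}\Rightarrow\ref{itemequality q-KS}$ holds because the polynomial $\aFr_q(k)^2-\aFr_q(k-1)\,\aFr_q(k+1)$ has nonnegative coefficients by Theorem~\ref{t:q-KS} and vanishes at $q=1$ by~\ref{itemequality KS}, hence is identically zero. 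So the content lies in $\ref{itemequality q-KS}\Rightarrow\ref{itemequality combinatorial}$ and $\ref{itemequality combinatorial}\Rightarrow\ref{itemequality q-linear}$. Throughout I assume without loss of generality that $x,y\in\Cr_1$, say $x=\al_r$ and $y=\al_s$ with $r<s$, so that $L(x)<L(y)$ for every $L\in\Ec(P)$.

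For $\ref{itemequality combinatorial}\Rightarrow\ref{itemequality q-linear}$, let $z\in\{x,y\}$ be the witness supplied by~\ref{itemequality combinatorial}. Fix $L\in\Ec(P)$ with $L(y)-L(x)=k$; by~\ref{itemequality combinatorial} there exist $u,v$ with $u\,||\,z$, $v\,||\,z$, $L(u)=L(z)-1$ and $L(v)=L(z)+1$, and since $P$ has width two both $u$ and $v$ belong to $\Cr_2$. Transposing $z$ with $v$ yields a linear extension $L^{+}$: the swap affects only positions $L(z)$ and $L(z)+1$, and every relation meeting $z$ or $v$ is preserved because $z\,||\,v$; similarly, transposing $z$ with $u$ yields a linear extension $L^{-}$. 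Take $z=x$. Then $y$ keeps its position, so $L^{+}$ has $L^{+}(y)-L^{+}(x)=k-1$ and $\wgt(L^{+})=\wgt(L)+1$, while $L^{-}$ has gap $k+1$ and $\wgt(L^{-})=\wgt(L)-1$; both maps $L\mapsto L^{+}$ and $L\mapsto L^{-}$ are injective, since $L$ is recovered from its image by undoing the transposition at the position read off from $x$. Hence $\aFr_q(k-1)\geqslant q\,\aFr_q(k)$ and $\aFr_q(k+1)\geqslant q^{-1}\aFr_q(k)$ coefficient-wise, and multiplying these while invoking Theorem~\ref{t:q-KS} gives $\aFr_q(k)^2\geqslant\aFr_q(k-1)\,\aFr_q(k+1)\geqslant q\,\aFr_q(k)\cdot q^{-1}\aFr_q(k)=\aFr_q(k)^2$. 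So both middle inequalities are equalities, and since $\aFr_q(k)\neq0$ (as $\aFr(k)>0$), this forces $\aFr_q(k-1)=q\,\aFr_q(k)$ and $\aFr_q(k+1)=q^{-1}\aFr_q(k)$, which is~\ref{itemequality q-linear} with $\ve=-1$. The case $z=y$ is symmetric, with the roles of $u$ and $v$ exchanged and $\ve=1$.

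The crux is $\ref{itemequality q-KS}\Rightarrow\ref{itemequality combinatorial}$, and here I would use the explicit weight-additive injection $\Phi$ from our proof of Theorem~\ref{t:q-KS}, which sends a pair $(L_1,L_2)$ with $L_1$ of gap $k-1$ and $L_2$ of gap $k+1$ to a pair of gap-$k$ linear extensions of the same total weight; since~\eqref{eq:q-KS-ineq} is coefficient-wise, condition~\ref{itemequality q-KS} is equivalent to $\Phi$ being a bijection. The task is then to describe the complement of the image of $\Phi$ in the lattice-path model and show it is empty exactly when~\ref{itemequality combinatorial} holds: following the Chung--Fishburn--Graham blueprint refined in~\cite{CPP2}, a pair of gap-$k$ paths fails to be in the image precisely when a certain local crossing move is available at one of the two heights marking $x$ and $y$, and I expect to show that such a configuration exists for some pair unless one of $x,y$, say $z$, is in every gap-$k$ extension immediately flanked by two elements of $\Cr_2$ — which is~\ref{itemequality combinatorial}. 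This is the main obstacle: it requires carrying the (considerably more intricate than in the Stanley case) combinatorics of $\Phi$ through both marked heights and the intervening run of $\Cr_2$-elements, together with a careful case analysis on the relative positions of $\al_r$, $\al_s$ and the blocks of $\Cr_2$-elements around them. As a consistency check, setting $y=\wh 1$ collapses this to the Stanley setting: the maximal element $y$ can never be flanked on both sides, so necessarily $z=x$, and ``$x$ immediately flanked by two incomparable elements in every extension with $x$ at the relevant position'' is exactly the $k$-pentagon property, so the argument should reduce to, and be built on top of, Theorem~\ref{t:q-Sta-equality}.
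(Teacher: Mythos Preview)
Your chain \ref{itemequality q-linear}$\Rightarrow$\ref{itemequality linear}$\Rightarrow$\ref{itemequality KS}$\Rightarrow$\ref{itemequality q-KS} is fine, and the step \ref{itemequality KS}$\Rightarrow$\ref{itemequality q-KS} via ``nonnegative polynomial vanishing at $q=1$'' is a clean shortcut the paper does not bother to isolate (it goes \ref{itemequality KS}$\Rightarrow$\ref{itemequality combinatorial}$\Rightarrow$\ref{itemequality q-linear}$\Rightarrow$\ref{itemequality q-KS} instead). Your argument for \ref{itemequality combinatorial}$\Rightarrow$\ref{itemequality q-linear} is essentially the paper's, including the squeeze $\aFr_q(k-1)\aFr_q(k+1)\geqslant \aFr_q(k)^2\geqslant \aFr_q(k-1)\aFr_q(k+1)$ and the cancellation that forces both inequalities to be equalities.

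The genuine gap is the crux implication \ref{itemequality q-KS}$\Rightarrow$\ref{itemequality combinatorial} (equivalently \ref{itemequality KS}$\Rightarrow$\ref{itemequality combinatorial}), which you only sketch. Two points here. First, there is no single ``weight-additive injection $\Phi$'' in the proof of Theorem~\ref{t:q-KS}: the difference $\aFr_q(k)^2-\aFr_q(k-1)\aFr_q(k+1)$ is broken into a sum of terms $S(u;w)$ indexed by the value $L(x)$ takes in each factor, and each $S(u;w)$ is itself decomposed (cf.~\eqref{eq:S_delta}) into \emph{products} of factors of the form $\Delta_1(\cdots)$ and $\Delta_2(\cdots)$, each nonnegative by a separate application of Lemma~\ref{l:lattice path bijection 1}, Lemma~\ref{l:path-averages}, or Lemma~\ref{l: lgv}. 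Equality therefore does not say ``one injection is surjective'' but rather ``in every $S(u;w)$, at least one factor in each summand vanishes''; the description ``a certain local crossing move is available at one of the two heights'' is too coarse to capture this.

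Second, extracting \ref{itemequality combinatorial} from these vanishings is not a single argument but a case analysis on the shape of $\Reg(P)$ near the columns of $x$ and $y$. The paper introduces parameters $u_0\le u_1$, $u_2\le u_3$ (governing the boundary above $x$) and $w_0\le w_1$, $w_2\le w_3$ (above $y$), and treats separately the regimes $m:=u_3-u_0\ge 2$ with $u_0<u_1$; $m\ge 2$ with $u_0=u_1$ and $u_3>w_3$; $m\ge 2$ with $u_0=u_1$, $w_2=w_3$, $u_0=w_0$, $u_3=w_3$ (which is shown to be \emph{incompatible} with equality by an explicit counting argument, not by image analysis); and $m,m'\le 1$. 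In several of these one must choose specific $(u,w)$, identify which $\Delta$-factor is forced to vanish, and then invoke the equality criteria of Lemma~\ref{l:bijection 1 equality} or Lemma~\ref{l: lgv} to pin a vertical segment of the boundary of $\Reg(P)$; only after assembling these segments does one get the ``width-one strip'' picture that translates into~\ref{itemequality combinatorial}. Your proposal does not yet contain any of this machinery, and the phrase ``I expect to show'' is precisely where the work lies.
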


\smallskip

Note that conditions~(c) and~(d) are specific to posets of width two.
While conditions~(a) and~(b) do extend to general posets,
the equivalence  \eqref{itemequality KS} \. $\Leftrightarrow$ \. \eqref{itemequality linear}
does not hold in full generality.   Even for the poset \. $P=C_3+C_3$ \. of width two \.
given in Example~\ref{ex:Ramon}, we have \. $\aF(2)^2 \. = \. \aF(3) \. \aF(1) \. =\. 4$, \. even though \. $\aF(1)=1$,
\. $\aF(2)=2$ \. and \. $\aF(3)=4$.

We should also mention that the \ts  $\aF(k)>0$ \ts assumption is a very weak constraint,
as the vanishing can be completely characterized for general posets (see Theorem~\ref{t:F-positive}).
We refer to Section~\ref{sec:posets} for further discussion of general posets, and for the \emph{$k$-midway
 property} which generalizes the $k$-pentagon property but is more involved.

\smallskip

\subsection{Proof discussion}\label{ss:intro-proof}  As we mentioned above, we start by
translating the problem into a natural question about directed lattice paths in a row/column
convex region in the grid (cf.~$\S$\ref{ss:finrem-region}).
From this point on, we do not work with posets and the proof
becomes purely combinatorial enumeration of lattice paths.

While the geometric proofs in~\cite{KS,Sta} are quite powerful, the equality cases
of the Alexandrov--Fenchel inequality are yet to be fully understood. So proving the
equality conditions of poset inequalities is quite challenging, see~\cite{SvH,CP}
and~$\S$\ref{ss:finrem-hist}.  This is why our direct combinatorial approach is so useful,
as the explicit injection becomes a bijection in the case of equality.

In the case of Stanley's inequality the CFG injection is quite simple and elegant,
leading to a quick proof of the equality condition.  For the Kahn--Saks inequality,
the direct injection is a large composition of smaller injections, each of which
is simple and either generalizes the CFG injection or of a different flavor, all
influenced by the noncrossing paths in the \emph{Lindstr\"om–-Gessel–-Viennot}
\emph{lemma}~\cite{GV} (see also~\cite[$\S$5.4]{GJ}).
Consequently, the equality condition of the Kahn--Saks
inequality is substantially harder to obtain as one has to put together the
equalities for each component of the proof and do a careful case analysis.

In summary, our proof of the main result (Theorem~\ref{t:q-KS-equality}) is like
an elaborate but delicious dish: the individual ingredients are elegant and natural,
but the instruction on how they are put together is so involved the resulting
recipe may seem difficult and unapproachable.

\smallskip

\subsection{Structure of the paper}\label{ss:intro-structure}
We start with an introductory Section~\ref{sec:basic}
 on posets, lattice paths, and lattice path
inequalities.  This section also includes some reformulated key lemmas from our previous
paper~\cite{CPP2}, whose proof is sketched both for clarity and completeness.
A reader very familiar with the standard definitions, notation and the results
in~\cite{CPP2} can safely skip this section.

In the next Section~\ref{sec:toolkit}, we introduce key combinatorial lemmas which
we employ throughout the paper: a
\emph{criss--cross inequality} (Lemma~\ref{l:path-averages}),
and two \emph{equality lemmas} (Lemma~\ref{l:bijection 1 equality}
and Lemma~\ref{l: lgv}).
In a short Section~\ref{sec:Sta}, we prove both the Stanley inequality (Theorem~\ref{t:Sta})
which easily extends to the proof of the $q$-Stanley inequality (Theorem~\ref{t:q-Sta}),
and the equality conditions for Stanley's inequality (Theorem~\ref{t:q-Sta-equality}).
Even though these results are known in greater generality (except for Theorem~\ref{t:q-Sta}
which is new), we recommend the reader not skip
this section, as the proofs we present use the same approach as the following sections.

In Sections~\ref{s:q-KS-proof} and~\ref{s:q-KS-equality}, we present the proofs
of Theorems~\ref{t:q-KS} and~\ref{t:q-KS-equality}, respectively,
by combining the previous tools together.
These are the central sections of the paper.
In a short Section~\ref{sec:multi},
we give a multivariate generalizations of our $q$-analogues.  Finally, in
Section~\ref{sec:posets}, we discuss generalizations of Theorem~\ref{t:q-KS-equality}
to all finite posets.  We state Conjecture~\ref{conj:KS-equality} characterizing the
\emph{complete equality conditions}i and prove several implications in support of the conjecture
using the properties of promotion-like maps (see~$\S$\ref{ss:finrem-promo}).
We conclude with final remarks and open problems in Section~\ref{sec:finrem}.

\bigskip

\section{Lattice path inequalities} \label{sec:basic}

\subsection{Basic notation} \label{ss:back-not}
We use \ts $[n]=\{1,\ldots,n\}$, \ts $\nn = \{0,1,2,\ldots\}$, and
\ts $\pp = \{1,2,\ldots\}$. Throughout the paper we use $q$ as a variable.
For polynomials \ts $f, g\in \zz[q]$, we write \ts $f\leqslant g$ \ts if
the difference \ts $(g-f) \in \nn[q]$, i.e.\ if \ts $(g-f)$ \ts
is a polynomial with nonnegative coefficients.  Note the difference
between relations
$$
x \ts \preccurlyeq y\,, \quad a \ts\le \ts b \,  \quad  \text{and} \quad f \leqslant \ts g \ts,
$$
for posets elements, integers and polynomials, respectively.

\smallskip

\subsection{Lattice path interpretation}\label{ss:lattice path interpretation}

Let $P=(X,\prec)$ be a finite poset of width two and let \ts $(\Cr_1,\Cr_2)$ \ts be a fixed
partition into two chains.  Denote by \ts $\zero=(0,0)$ \ts the origin and by
\ts $\eone = (1,0)$, \ts
$\etwo=(0,1)$ \ts two standard unit vectors in $\Zb^2$.

For a linear extension $L\in \Ec(P)$, define the \defn{North--East {\rm (NE)} lattice path}
\ts $\phi(L)$ obtained from $L$ by interpreting it as a sequence of North and East
steps corresponding to elements in $\Cr_1$ and $\Cr_2$, respectively.
Formally, let \ts
$\phi(L):=(Z_t)_{1 \leq t \leq n}$ \. in \ts $\Zb^2$ \ts
from \ts $\zero=(0,0)$ to $(\ana,\bnb)$, be the path \ts
defined recursively as follows:
\begin{equation*}
	Z_0 \, = \, \zero, \qquad  Z_t \ := \
	\begin{cases}
		\ts Z_{t-1} \ts + \ts \eone & \text{ if } \ L^{-1}(t) \in \Cr_1\.,\\
		\ts Z_{t-1}\ts + \ts \etwo & \text{ if } \ L^{-1}(t) \in \Cr_2\..
	\end{cases}
\end{equation*}
%

\nin
Denote by $\Cen(P)$ the set
\begin{align*}
 \Cenup(P) \ &:= \ \bigg\{ \left(h-\frac{1}{2}, k - \frac{1}{2}\right) \in \Rb^2 \ :  \  \alpha_h \. \precc \. \beta_k\,, \ 1\le h \le \ana, \ 1\le k \le \bnb \. \bigg \}\., \\
  \Cendown(P) \ &:= \ \bigg\{ \left(h-\frac{1}{2}, k - \frac{1}{2}\right) \in \Rb^2 \ :  \  \alpha_h \. \succc \. \beta_k\,, \ 1\le h \le \ana, \ 1\le k \le \bnb \. \bigg \}\..
\end{align*}
Let $\Forbup(P)$ and $\Forbdown(P)$ be  the set of unit squares in $[0, \ana] \times [0,\bnb]$
whose centers are in $\Cenup(P)$ and $\Cendown(P)$, respectively.
Note that the region $\Forbup(P)$ lies above the region $\Forbdown(P)$, and their interiors do not intersect.
Let $\Reg(P)$ be the (closed) region of $[0,\ana]\times [0,\bnb]$ that
is bounded from above by the region $\Forbup(P)$, and from below by the region $\Forbdown(P)$,
see Figure~\ref{f:regionP}.
It follows directly from the definition that $\Reg(P)$ is a connected row and
column convex region, with boundary defined by two lattice paths. Moreover, the lower boundary of $\Reg(P)$ is the lattice path corresponding to the $\Cr_1$-minimal linear extension (i.e. assigning the smallest possible values to the elements of $\Cr_1$), and the upper boundary corresponds to the $\Cr_1$-maximal linear extension.

\begin{figure}[hbt]
	\centering
	\begin{tabular}{c@{\hskip 1 in}c }
		\includegraphics[width=0.18\linewidth]{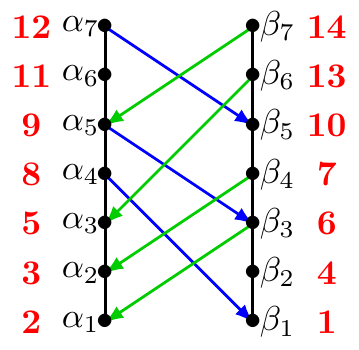}  &
		\includegraphics[width=0.2\linewidth]{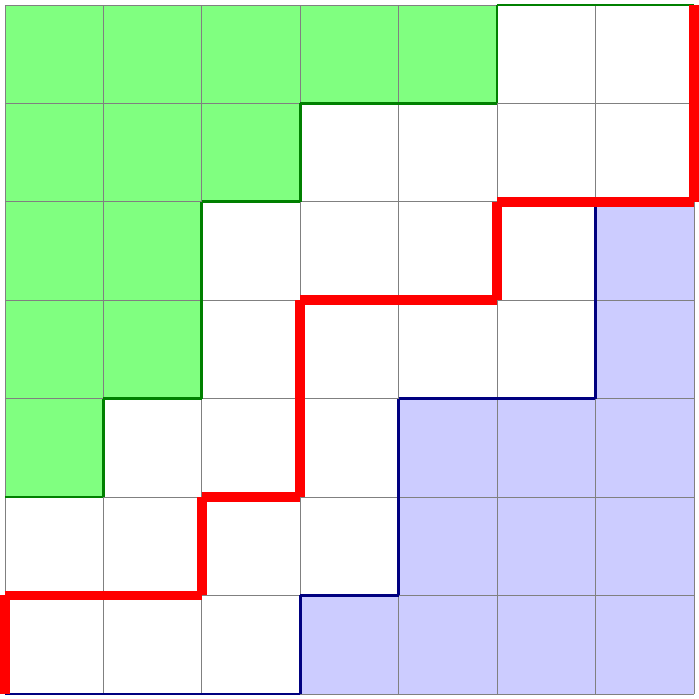}  \\
		(a) & (b)
	\end{tabular}
	\caption{(a) The Hasse diagram of a poset $P$, and a linear extension $L$ of $P$ (written in red).
	(b) The corresponding region $\Reg(P)$, with $\Forbup(P)$ in green and $\Forbdown(P)$ in blue, and the lattice path $\phi(L)$ in red.	
	}
	\label{f:regionP}
\end{figure}

\smallskip

\begin{lemma}[{\rm \cite[$\S$2]{CFG} and~\cite[Lem~8.1]{CPP2}}]\label{l:interpretation lattice path}
	The map~$\phi$ described above is a bijection between
    \ts $\Ec(P)$ \ts and NE lattice paths in \ts $\Reg(P)$ \ts
    from \ts $\zero$ \ts to \ts $(\ana,\bnb)$.
\end{lemma}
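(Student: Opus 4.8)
The plan is to momentarily forget the order relations of $P$, treat a linear extension merely as a shuffle of the two chains $\Cr_1$ and $\Cr_2$, and only afterwards encode the surviving cross-chain comparabilities as a geometric condition on the path. Throughout I use that the chains are labelled in their own order, so that in any $L \in \Ec(P)$ the $\Cr_1$-steps of $\phi(L)$ occur in the order $\al_1, \al_2, \ldots$ and the $\Cr_2$-steps in the order $\be_1, \be_2, \ldots$ First I would exhibit the candidate inverse: given a NE lattice path $\pi$ from $\zero$ to $(\ana,\bnb)$, let the $i$-th East step of $\pi$ represent $\al_i$ and the $j$-th North step represent $\be_j$, and let $L_\pi$ be the total order on $X = \Cr_1 \sqcup \Cr_2$ in which the elements appear in the order their steps are traversed. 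Since $\pi$ has exactly $\ana$ East and $\bnb$ North steps, $L_\pi$ is a bijection $X \to [n]$, and by construction it restricts to the chain order on each of $\Cr_1$ and $\Cr_2$. It is then immediate that $\pi \mapsto L_\pi$ and $L \mapsto \phi(L)$ are mutually inverse bijections between NE lattice paths from $\zero$ to $(\ana,\bnb)$ and linear orders on $X$ extending $\prec$ on $\Cr_1$ and on $\Cr_2$, so it remains only to match, on the two sides, the orders lying in $\Ec(P)$ with the paths lying in $\Reg(P)$.

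The heart of the argument is the following dictionary, proved by tracking one step of the path. Fix $\pi$ and indices $1 \le h \le \ana$, $1 \le k \le \bnb$, and write the $h$-th East step of $\pi$ as the horizontal unit segment from $(h-1, y_h)$ to $(h, y_h)$; here $y_h$ equals the number of North steps traversed before it, i.e.\ the number of $\be_j$ preceding $\al_h$ in $L_\pi$. Hence $\al_h$ precedes $\be_k$ in $L_\pi$ iff at most $k-1$ North steps precede the $h$-th East step, i.e.\ iff $y_h \le k-1$. On the other hand, at $x$-coordinate $h - \tfrac12$ the path $\pi$ lies precisely on this East step, at height $y_h$, so the center $\left(h - \tfrac12,\, k - \tfrac12\right)$ of the unit square $[h-1,h] \times [k-1,k]$ lies above $\pi$ iff $k - \tfrac12 > y_h$, again iff $y_h \le k-1$; note the center can never lie on $\pi$. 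Therefore
\[
\al_h \text{ precedes } \be_k \text{ in } L_\pi
\qquad \Longleftrightarrow \qquad
\left(h - \tfrac12,\, k - \tfrac12\right) \text{ lies above } \pi,
\]
and, complementarily, $\be_k$ precedes $\al_h$ in $L_\pi$ iff this center lies below $\pi$.

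Finally I would assemble the pieces. The order $L_\pi$, which already respects the two chain orders, is a linear extension of $P$ iff it also respects every cross-relation $\al_h \prec \be_k$ and every $\be_k \prec \al_h$. By the dictionary, the first family of requirements says every point of $\Cenup(P)$ lies above $\pi$ and the second says every point of $\Cendown(P)$ lies below $\pi$; since $\Cenup(P)$ is closed under decreasing the first coordinate and increasing the second (as $\al_{h'} \preceq \al_h \prec \be_k \preceq \be_{k'}$ forces $\al_{h'} \prec \be_{k'}$), the (possibly empty) region $\Forbup(P)$ is a staircase abutting the top edge of $[0,\ana] \times [0,\bnb]$, and symmetrically for $\Forbdown(P)$. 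Consequently these two conditions are equivalent to $\pi$ running weakly below $\Forbup(P)$ and weakly above $\Forbdown(P)$, i.e.\ to $\pi \subseteq \Reg(P)$. Together with the bijection of the first paragraph this proves the lemma.

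I expect the dictionary to be the only real obstacle; everything else is essentially definitional once it is in hand. Its one delicate point is the boundary case $y_h = k-1$ (the path touching the bottom edge of a forbidden-up square), which is exactly why $\phi$ must land in the \emph{closed} region $\Reg(P)$ rather than its interior.
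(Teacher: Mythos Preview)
Your proof is correct and complete. The paper does not include its own proof of this lemma, instead citing \cite[\S2]{CFG} and \cite[Lem~8.1]{CPP2}; your argument is the standard one and matches what appears in those references.
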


\smallskip

\begin{rem}\label{rem:general-regions}
{\rm It is not hard to see the regions \ts $\Reg(P)$ \ts which appear in
Lemma~\ref{l:interpretation lattice path} have no other constraints.
Formally, for every region \ts $\Gamma\ssu \zz^2$ \ts between two
noncrossing paths \. $\gamma,\gamma': \ts \zero \to (\ana,\bnb)$, there
is a poset $P$ of width two with a partition into two chains of sizes \ts
$\ana$ and~$\bnb$, such that \ts $\Gamma = \Reg(P)$. We leave the proof
to the reader, see also~$\S$\ref{ss:finrem-region}.
}\end{rem}

\medskip

\subsection{Inequalities for pairs of paths}\label{ss:lemmas-ineq}

We will use the lattice path inequalities from~\cite{CPP2} and prove their extensions. In order to explain the combinatorics, we will briefly describe the proofs from~\cite{CPP2}. Informally, they state that there are more pairs of paths which pass closer to the \emph{inside} of the region than to the \emph{outside} of the region.

Let \ts $A, B \in \Reg(P)$.
Denote by \. $\Kc(A,B)$  \. the set of NE lattice paths \ts $\zeta: \A \to \B$, such that \ts
    $\zeta\in \Reg(P)$. Similarly,
    denote by \. $\aK_q(\A,\B)$  \. the polynomial
	\begin{align*}
		\aK_q(\A,\B) \  := \  \sum_{\zeta \in \Kc(\A,\B)} \. q^{\wgt(\zeta)}\,,
	\end{align*}
	and we write \. $\aK(A,B) := \aK_1(A,B)$ \. (i.e., when $q=1$).
\medskip

\begin{lemma}[{\rm \cite[Lem~8.2]{CPP2}}]
\label{l:lattice path bijection 1}
Let \ts $A, A', B', B \in \Reg(P)$ \ts be on the same vertical line with $A$ above $A'$ such that $\oa{AA'} = -\oa{BB'}$ and $A'$ on or above $B$, i.e.\ \ts $a_1=a_1'=b_1=b_1'$ \ts and
\ts $a_2 - a_2' =  b_2' - b_2$ with $a_2' \geq b_2$.
Let \ts $C,D \in \Reg(P)$ \ts be on a vertical line to the right
of the line $AB$, and such that \ts $a'_2-b_2 \geq c_2-d_2$.
Then:
	\begin{equation*}
				\aKr_q(\A',\C) \,\cdot \, \aKr_q(\B',\D) \  \geqslant \  	\aKr_q(\A,\C) \,\cdot \, \aKr_q(\B,\D).
			\end{equation*}			
	\end{lemma}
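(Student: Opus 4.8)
The plan is to deduce the coefficient-wise inequality from a single \emph{weight-preserving injection}
$$
\Theta \ : \ \Kc(A,C) \times \Kc(B,D) \ \hookrightarrow \ \Kc(A',C) \times \Kc(B',D),
$$
since then $\aKr_q(A',C)\ts\aKr_q(B',D)$ and $\aKr_q(A,C)\ts\aKr_q(B,D)$ are the sums of $q^{\wgt}$ over these two sets of pairs of paths, and the former dominates the latter coefficient-wise. If $A=A'$ (equivalently $B=B'$) both sides coincide, so I assume the common vertical shift $\delta:=a_2-a_2'=b_2'-b_2$ is positive; let $v$ denote the segment of $\delta$ consecutive North steps along the line $\{x=a_1\}$ running from $A'$ up to $A$. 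Since $A,A'\in\Reg(P)$ and $\Reg(P)$ is column-convex, every lattice point of $v$ lies in $\Reg(P)$.

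To define $\Theta$ on a pair $(\zeta_1,\zeta_2)$, first set $\eta_1:=v\cdot\zeta_1\in\Kc(A',C)$, obtained by prepending $v$ to $\zeta_1$. The second coordinate is the subtle one. Because $a_2'\ge b_2$, the path $\eta_1$ starts weakly above $\zeta_2$ on the line $\{x=a_1\}$, and the hypotheses are arranged precisely so that $\eta_1$ and $\zeta_2$ meet; at a suitably chosen common vertex one reroutes the two paths in the spirit of the Lindström--Gessel--Viennot lemma~\cite{GV}, transferring the ``extra height'' carried by the prefix $v$ from the initial portion of $\eta_1$ onto the initial portion of $\zeta_2$ and thereby lifting its source from $B$ to $B'$; the output is a path $\eta_2\in\Kc(B',D)$, and I put $\Theta(\zeta_1,\zeta_2):=(\eta_1,\eta_2)$. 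The map is injective because the operation is reversible: from $(\eta_1,\eta_2)$ one recognizes and deletes the prefix $v$ of $\eta_1$, then undoes the reroute at the recorded vertex, recovering $(\zeta_1,\zeta_2)$.

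It remains to verify the two properties that make the construction legitimate, and these constitute the heart of the argument. First, $\eta_2$ must stay inside $\Reg(P)$: the reroute pushes part of the second path upward, and the inequality $c_2-d_2\le a_2'-b_2$ is exactly the bound ensuring that this lifted portion does not breach the upper forbidden region $\Forbup(P)$, with row- and column-convexity of $\Reg(P)$ used once more. Second, one checks $\wgt(\eta_1)+\wgt(\eta_2)=\wgt(\zeta_1)+\wgt(\zeta_2)$: prepending $v$ raises $\wgt(\zeta_1)$ by a controlled amount, the reroute redistributes the remaining weight between the two paths, and the same relation $c_2-d_2\le a_2'-b_2$ (read at its extremal case as an identity among the North-step counts of the four paths) makes the accounting balance. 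The main obstacle is packaging the reroute as one clean, invertible move uniformly across the case distinction on how $\eta_1$ and $\zeta_2$ first intersect, all while respecting the region constraint at the rerouting vertex; this is carried out in~\cite[Lem~8.2]{CPP2}, which we only sketch here for completeness.
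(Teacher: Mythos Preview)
Your overall plan --- a weight-preserving injection $\Kc(A,C)\times\Kc(B,D)\hookrightarrow\Kc(A',C)\times\Kc(B',D)$ --- is exactly the paper's, but the construction you describe does not work.

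The key claim, that $\eta_1:=v\cdot\zeta_1$ and $\zeta_2$ must meet, is false. Take $A=(0,3)$, $A'=(0,2)$, $B'=(0,1)$, $B=(0,0)$ and $C=(2,4)$, $D=(2,2)$ (so $\delta=1$ and $a_2'-b_2=2=c_2-d_2$), with $\Reg(P)$ the full rectangle. For $\zeta_1:(0,3)\to(1,3)\to(2,3)\to(2,4)$ and $\zeta_2:(0,0)\to(1,0)\to(2,0)\to(2,1)\to(2,2)$, the path $\eta_1$ runs through $(0,2),(0,3),(1,3),(2,3),(2,4)$ and shares no vertex with $\zeta_2$. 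The hypothesis $c_2-d_2\le a_2'-b_2$ does not force a crossing: $\eta_1$ starts weakly \emph{above} $\zeta_2$, and nothing prevents it from ending above as well (indeed $c_2\ge d_2$ is permitted). Even when $\eta_1$ and $\zeta_2$ do happen to meet at some vertex $E$, an LGV swap there produces paths $A'\to D$ and $B\to C$, not $A'\to C$ and $B'\to D$; no reroute at a common vertex of these two particular paths can move the source of the second path from $B$ up to $B'$ while keeping its terminus at~$D$.

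The paper's injection uses a different device: it \emph{translates} $\zeta_2$ upward by $\vb:=\oa{BA'}$ to obtain $\wh\zeta_2:A'\to D+\vb$. Now $\zeta_1$ starts at $A$ (strictly above $A'$), while $\wh\zeta_2$ ends at height $d_2+(a_2'-b_2)\ge c_2$ by the hypothesis, so $\zeta_1$ begins above $\wh\zeta_2$ and ends at or below it and the two \emph{must} cross. Swapping at the first crossing $E$ and then translating the piece containing $A$ back down by $\vb$ directly yields paths $A'\to C$ and $B'\to D$ in $\Reg(P)$. The $q$-weight is preserved because the piece shifted up and the piece shifted down both run from $x=a_1$ to $x=E_1$ and hence contain the same number of East steps, so the two vertical shifts cancel.
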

\begin{figure}[hbt]
\begin{center}
\includegraphics[width=0.9\textwidth]{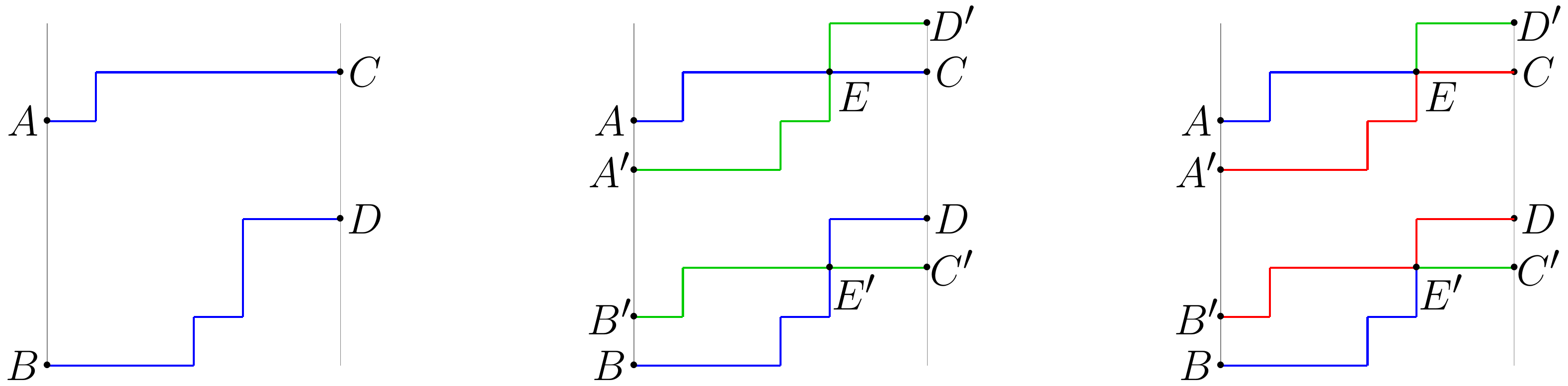}
\end{center}
\caption{The proof of Lemma~\ref{l:lattice path bijection 1}: The injection $\vk$ takes the blue paths $A \to C, B \to D$, translates the $B \to D$ path up to form the green path $A' \to D'$ (second picture), intersects it with the blue $A \to C$ path at $E$, and then forms the red path $A' \to C$ (by following the green $A' \to E$ and then switching to the blue $E \to C$. The other red path is obtained by translating the blue/green $A \to E \to D'$ down.  }\label{fig:injection1}
\end{figure}

\begin{proof}[Proof outline]
We exhibit an injection $\vk$ from pairs of paths
\ts $\gamma: A \to C$, \ts $\delta:B \to D$ \ts in \ts $\Reg(P)$ \ts
to pairs of paths \ts $\gamma':A' \to C$, \ts $\delta': B' \to D$ \ts in \ts $\Reg(P)$.
Let \ts $\vb = \oa{BA'}$ \ts and \ts $\wh{\delta} = \delta +\vb$ \ts be
the translated path~$\delta$, which starts at \ts $A' = B+ \vb$ \ts and
ends at \ts $D' = D+ \vb$, lying on or above~$C$ by the condition in the Lemma.
Then $\gamma$ and $\wh{\delta}$ must intersect, and let $E$ be their first (closest to $A$) intersection point.

Now, let \. $\gamma' = \wh{\delta}(A',E) \circ \gamma(E,C)$, so \ts $\gamma':A' \to C$.
Similarly, let \. $\delta'= \gamma(A,E) \circ \wh{\delta}(E,D') \ts - \ts\vb$, so \ts $\delta': B' \to D$. Then \. $\gamma' \subset \Reg(P)$ \. since $\wh{\delta}$ is on or above \ts $\delta \subset \Reg(P)$ (because $a_2 \geq b_2$) and is strictly below $\gamma\subset \Reg(P)$ since $E$ is the first intersection point. Similarly, $\gamma(A,E) - \vb$ \ts is also between $\gamma$ and $\delta$ and hence in $\Reg(P)$. The other parts of \ts $\gamma', \delta'$ \ts are part of the original paths $\gamma,\delta$ and so are also in~$\Reg(P)$. Then $\vk$ is clearly an injection.
Since the paths are composed of the same pieces, some of which translated vertically with zero net effect, the total $q$-weight is preserved.
\end{proof}

\bigskip

\section{Lattice paths toolkit expansion}\label{sec:toolkit}

\subsection{Criss-cross inequalities}\label{sec:toolkit-criss-cross}

Here we consider inequalities between sums of pairs of paths.

\smallskip

\begin{lemma}[{\rm \defn{Criss-cross lemma}}]
\label{l:path-averages}
Let \. $A,A',B',B \in \Reg(P)$ \. be on the same vertical line, with $A$ the highest and $B$ the lowest points.
In addition, let \. $C,C',D,D'\in \Reg(P)$ be on another vertical line, with $C$ the highest and $D$ the lowest
points, and such that \. $\oa{CC'}=-\oa{DD'}=\oa{AA'}=-\oa{BB'}$.
Finally, let \. $\oa{AB} = \oa{CD}$. Then we have:
\begin{equation}\label{eq:double_path_diff}
\begin{split}
				&\aKr_q(A,C) \,\cdot \, \aKr_q(B,D) \ + \  \aKr_q(A',C') \,\cdot \, \aKr_q(B',D') \\
& \qquad \geqslant \   	\aKr_q(A',C) \,\cdot \, \aKr_q(B',D) \ + \ \aKr_q(A,C') \,\cdot \, \aKr_q(B,D') .
				\end{split}
			\end{equation}	
\end{lemma}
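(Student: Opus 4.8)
The plan is to derive the criss-cross inequality \eqref{eq:double_path_diff} by packaging Lemma~\ref{l:lattice path bijection 1} into a single injection on the \emph{disjoint union} of the two families of pairs appearing on the left-hand side. Concretely, set $\Lc := \Kc(A,C)\times\Kc(B,D)$ and $\mathcal{R} := \Kc(A',C')\times\Kc(B',D')$, and similarly $\Lc' := \Kc(A',C)\times\Kc(B',D)$ and $\mathcal{R}' := \Kc(A,C')\times\Kc(B,D')$. We want a weight-preserving injection $\Lc' \sqcup \mathcal{R}' \hookrightarrow \Lc \sqcup \mathcal{R}$. Writing $\vb = \oa{AA'} = \oa{CC'}$ (a downward vertical vector, since $A$ is above $A'$ and $C$ above $C'$), the key geometric observation is that a pair in $\Lc'$ is a pair of paths $\gamma:A'\to C$, $\delta:B'\to D$; translating $\gamma$ up by $-\vb$ gives a path $A \to C'$, while $\delta$ stays put. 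If $\gamma - \vb$ (now from $A$) and $\delta$ cross, we switch tails at the first crossing $E$ to produce paths $A\to D$ and $B'\to C'$ — but the endpoints are wrong, so instead we should switch so as to land in $\Lc$ or $\mathcal{R}$. The correct bookkeeping: apply the injection $\vk$ of Lemma~\ref{l:lattice path bijection 1} "upward" on some inputs and "downward" on others, and show the two partial injections have disjoint images distinguished by whether the swapped path passes above or below a reference line.

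More carefully, I would proceed as follows. First, I would verify that Lemma~\ref{l:lattice path bijection 1} applies in both "directions" here: with the roles $A\leftrightarrow A'$, $B\leftrightarrow B'$ on the left line and $C\leftrightarrow C'$, $D\leftrightarrow D'$ on the right line, the hypotheses $\oa{AA'}=-\oa{BB'}$, $\oa{CC'}=-\oa{DD'}$, $\oa{AB}=\oa{CD}$, and the convexity of $\Reg(P)$ give exactly the configuration needed (possibly after checking the inequality $a_2'-b_2 \geq c_2-d_2$, which follows from $\oa{AB}=\oa{CD}$ and $\vb$ being a common downward shift). Applying Lemma~\ref{l:lattice path bijection 1} once gives $\aKr_q(A',C)\cdot\aKr_q(B',D) \geqslant \aKr_q(A,C)\cdot\aKr_q(B,D)$ — but that is an inequality in the \emph{wrong} direction relative to what \eqref{eq:double_path_diff} needs, so a single application does not suffice; we genuinely need to combine two pairs of products. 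Thus the right framework is a four-term "swap" argument: define on $\Lc'$ the map that translates the $A'\to C$ path up by $-\vb$ to get an $A\to C'$ path, pairs it with the untranslated $B'\to D$ path, finds the first intersection $E$, and switches tails to get either an element of $\Lc = \Kc(A,C)\times\Kc(B,D)$ (if we follow $A\to E$ along the translated path then $E\to D$ along $\delta$, reshuffling) — I would set up the two cases so that one case outputs into $\Lc$ and the "leftover" case outputs into $\mathcal{R}$, with an analogous construction on $\mathcal{R}'$ filling the complementary part of $\Lc\sqcup\mathcal{R}$.

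The main obstacle, and where I expect the real work to lie, is the \emph{disjointness} and \emph{surjectivity-onto-the-right-piece} bookkeeping: one must partition $\Lc\sqcup\mathcal{R}$ into the image of $\Lc'$ and the image of $\mathcal{R}'$ (or show the combined map is injective) by means of a clean invariant — typically "does the distinguished path cross a certain vertical or diagonal reference segment, and if so, above or below a marked lattice point." This is exactly the flavor of the LGV-style tail-swapping argument alluded to in $\S$\ref{ss:intro-proof}, and the delicate point is that the intersection point $E$ and the choice of which path to translate must be recoverable from the output so that the map inverts on its image. I would therefore organize the proof as: (1) state the two partial injections $\vk_1: \Lc' \to \Lc\sqcup\mathcal{R}$ and $\vk_2:\mathcal{R}'\to\Lc\sqcup\mathcal{R}$ explicitly via tail-swaps as in Figure~\ref{fig:injection1}; (2) check each preserves $q$-weight, since all pieces are either original or translated vertically with net zero shift (the shift on one path is cancelled by the need to translate back, as in the proof of Lemma~\ref{l:lattice path bijection 1}); (3) check $\Reg(P)$-containment using row/column convexity exactly as before; (4) prove $\mathrm{im}(\vk_1)\cap\mathrm{im}(\vk_2)=\varnothing$ by exhibiting the reconstruction of the input from any output, the case split being governed by the position of the swap point. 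Step (4) is the crux; everything else is a direct adaptation of the already-proven Lemma~\ref{l:lattice path bijection 1}.
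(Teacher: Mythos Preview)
Your high-level plan --- a weight-preserving injection from the two RHS families into the two LHS families via translate-and-swap --- is the right genre, but the concrete construction you sketch is broken, and the decisive geometric step is absent.

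First, a computational error. You set $\vb=\oa{AA'}$ and assert that translating $\gamma:A'\to C$ by $-\vb$ yields a path $A\to C'$. It does not: the new endpoint is $C-\vb$, which lies \emph{above} $C$, whereas $C'=C+\vb$ lies below. The translation that actually works uses the longer vector $\vb:=\oa{AB'}=\oa{A'B}=\oa{CD'}=\oa{C'D}$ (these coincide thanks to $\oa{AB}=\oa{CD}$ together with $\oa{AA'}=-\oa{BB'}=\oa{CC'}=-\oa{DD'}$), and it is applied to the \emph{lower} path: given $(\zeta,\eta)\in\Kc(A',C)\times\Kc(B',D)$, shift $\eta$ up by $-\vb$ to obtain $\wh\eta:A\to C'$, which now genuinely must cross $\zeta:A'\to C$.

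Second, and more substantively, the paper does not build two partial injections $\vk_1,\vk_2$ and then argue disjointness of images; it runs a counting argument over a common decomposition. From $(\zeta,\wh\eta)$ one extracts the first and last intersection points $E,F$, the two middle segments $\pi=\zeta(E,F)$ and $\rho=\wh\eta(E,F)$, and the four tails $\gamma:A\to E$, $\gamma':A'\to E$, $\delta:F\to C$, $\delta':F\to C'$. Recombining these pieces (and translating the $\rho$-side back by $\vb$) yields four candidate pairs $(\zeta_i,\eta_i)$, two with endpoints matching the LHS terms and two matching the RHS terms. One then checks, case by case on which of the eight paths lie in $\Reg(P)$, that the number of valid LHS pairs is always at least the number of valid RHS pairs. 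The only nontrivial case is when exactly one RHS pair is valid: there the key is a \emph{sandwiching} observation --- for instance $\gamma'$ lies vertically between $\gamma$ and $\gamma'+\vb$, both already known to sit in $\Reg(P)$, so by column-convexity $\gamma'\subset\Reg(P)$ too --- which forces at least one LHS pair to be valid. This sandwiching argument is precisely the content you flagged as ``the crux'' in your step~(4); it is not a disjointness-of-images statement, and your proposal does not contain it.
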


\begin{figure}[h!]
\includegraphics[width=1.5in]{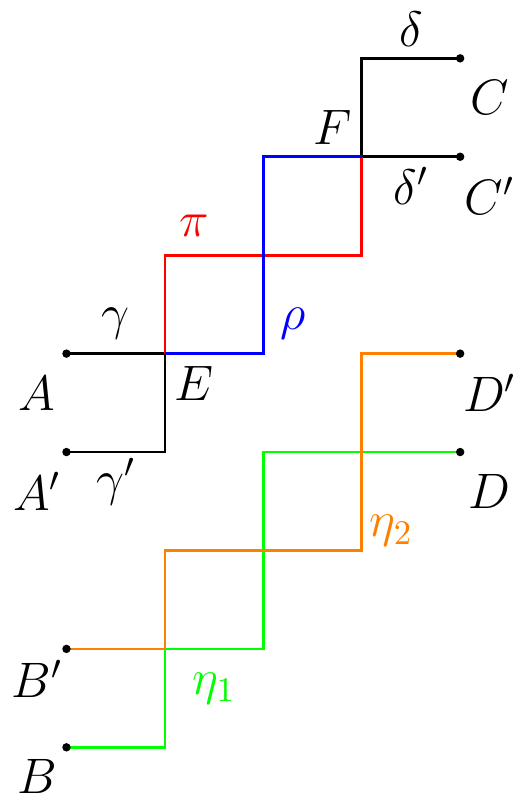}
\caption{ Illustration of the proof of Lemma~\ref{l:path-averages}. Here we show points \ts $E,F$ \ts and paths \ts $\gamma, \gamma', \delta,\delta'$, $\pi$ (in red) \ts and~$\rho$ (in blue). The green path $\eta_1$ is formed by following $\gamma'$ then $\rho$ and then $\delta'$ and translating the resulting path by $\vb$, so it is a path $B \to D$. The orange path $\eta_2$ is also shown. } \label{fig:path_averages1}
\end{figure}

\smallskip

\begin{proof}
The idea is to consider the pairs of paths counted on each side, and show that each pair (after the necessary transformation) is counted less times on the RHS than on the LHS, where the number of times it could appear on each side is $0,1,2$.

To be precise, given two points $E$ and $F$ in $\Reg(P)$ between the lines $AB$ and $CD$, and paths $(\pi,\rho)$  with endpoints $E$ and $F$, let
$$S(E,F) \ := \ \big\{ (\ga, \. \ga', \. \delta, \. \delta') \ | \ \ga:A\to E, \, \ga':A'\to E, \, \delta:F \to C, \, \delta': F \to C' \big\}.
$$
Here we have $4$-tuples of paths with the given endpoints, such that their only intersection points are the endpoints, namely \. $\ga \cap \ga' =\{E\}$ \. and \, $\delta \cap \delta'=\{F\}$. Connecting  the paths in $S(E,F)$  with $(\pi,\rho)$, we can obtain four different pairs of paths from the points $A,A'$ to $C,C'$. We now count how often each such pair is counted in LHS and RHS of the desired inequality in~\eqref{eq:double_path_diff}, after we translate one of the paths by \. $\vb := \oa{AB'} = \oa{A'B} = \oa{C'D} = \oa{CD'}$.

Fix points $E,F$ as above, paths $\pi,\rho:E \to F$, and 4-tuple $(\ga,\ga',\delta,\delta') \in S(E,F)$. These 6 paths can be combined in different ways to give 2 paths from $A,A'$ to $C,C'$, and after translating one by $\vb$ obtain pairs appearing in~\eqref{eq:double_path_diff}.
The pairs are:
\begin{align*}
&\zeta_1 \ := \ \ga \circ \pi \circ \delta, \qquad \zeta_1: A \to C\ts,   \quad &\eta_1 \ := \ (\ga' \circ \rho \circ \delta') +\vb, \qquad \eta_1: B \to D\ts,  \\
&\zeta_2 \ := \ \ga' \circ \pi \circ \delta', \qquad \zeta_2:A' \to C'\ts,  \quad &\eta_2 \ :=  \ (\ga \circ \rho \circ \delta) +\vb, \qquad \eta_2: B' \to D'\ts, \\
&\zeta_3 \ := \ \ga \circ \pi \circ \delta', \qquad \zeta_3: A \to C'\ts,   \quad &\eta_3 \ := \ (\ga' \circ \rho \circ \delta) +\vb, \qquad \eta_3: B \to D'\ts, \\
&\zeta_4 \ := \ \ga' \circ \pi \circ \delta, \qquad \zeta_4:A' \to C\ts,  \quad  &\eta_4\ := \ (\ga \circ \rho \circ \delta') +\vb, \qquad\eta_4: B' \to D\ts.
\end{align*}

\smallskip

\nin
{\bf Case 1:} \ts  At least one of $\zeta_3, \eta_3$ is not (entirely contained) in $\Reg(P)$, and at least one of $\zeta_4, \eta_4$ is not  in $\Reg(P)$, then none of these pairs of paths is counted in the RHS of~\eqref{eq:double_path_diff}, and the contribution to the RHS is 0.

\smallskip

\nin
{\bf Case 2:} \ts Both pairs of paths  $(\zeta_3,\eta_4)$ and $(\zeta_4,\eta_4)$ are contained in $\Reg(P)$. This implies that all the components and their translates are in $\Reg(P)$, and hence \. $\zeta_1,\zeta_2,\eta_1,\eta_2 \subset \Reg(P)$. So the contribution from these paths is 2 on both LHS and RHS.

\smallskip

\nin {\bf Case 3 and 4:} \ts Exactly one pair is in $\Reg(P)$, say $\zeta_3,\eta_3 \subset \Reg(P)$ and at least one of $\zeta_4,\eta_4$ is not  in $\Reg(P)$. Then $\ga, \delta', \ga' +\vb, \delta +\vb \subset \Reg(P)$. Since $\ga'$ is between $\ga$ and $\ga'+\vb$, both of which are contained in~$\Reg(P)$, and since $\Reg(P)$  is simply connected, we conclude that  $\ga'$ is also in $\Reg(P)$. Thus, $\zeta_2 \subset \Reg(P)$. Similarly, since $\ga +\vb$ is between $\ga$ and $\ga'+\vb$, we have $\ga+\vb \subset \Reg(P)$. Thus, $\zeta_2, \eta_2 \subset \Reg(P)$. Hence these paths are counted once in the RHS and at least once in the LHS.

To finish the proof, we need to show that we have indeed considered all possible pairs of paths which can arise in the RHS. Let $\zeta \in \Kc(A',C)$, $\eta \in \Kc(B',D)$, so $(\eta,\zeta)$ is a pair of paths counted in the first term on the RHS. Let $\wh{\eta} = \eta-\vb:A \to C'$, it has to intersect $\zeta$. Let $E$ be the first intersection point (closest to $A/A'$) and let $F$ be the last intersection point. Set $\pi = \zeta(E,F)$, $\rho = \wh{\eta}(E,F)$ and $\ga' = \zeta(A',E)$, $\ga = \wh{\eta}(A,E)$, $\delta'=\wh{\eta}(F,C')$ and $\delta=\zeta(F,C)$. Then, fixing these $E,F, \pi, \rho$ and $(\ga,\ga',\delta,\delta') \in S(E,F)$ we recover $\zeta = \zeta_4$ and $\eta = \eta_4$. Similarly, given $\zeta \in \aKr(A,C')$ and $\eta \in \aKr(B,D')$ we recover $(\zeta_3,\eta_3)$.

Moreover, these constructions reassign portions of the same paths on the RHS and LHS, total translated areas cancel out, so the $q$-weights are preserved and the inequality holds for the $q$-weighted paths. This completes the proof.
\end{proof}

\medskip

\subsection{Equalities}

Here we describe the cases when equalities in the lattice path lemmas from Section~\ref{sec:basic} are achieved.
The following is an easy generalization of the~\cite[Lemma 8.4]{CPP2}.

\begin{lemma}[{\rm \defn{Equality lemma}}]\label{l:bijection 1 equality}
	Let \ts $A,B,A',B', C, D \in \Reg(P)$ \ts be as in Lemma~\ref{l:lattice path bijection 1}.
We then have the following conditions for equalities in Lemma~\ref{l:lattice path bijection 1}: \
	If \. $a_2'-b_2 > c_2-d_2$\ts, then
	\begin{equation*}
				\aKr(A',C) \,\cdot \, \aKr(B',D) \  = \  	\aKr(A,C) \,\cdot \, \aKr(B,D)
			\end{equation*}
			if and only if either both sides are zero, or
				\begin{equation*}
				\aKr_q(A',C) \ = \ \aKr_q(A,C) \ \quad \text{and} \ \quad   \aKr_q(B',D) \  = \  	 \aKr_q(B,D) .
			\end{equation*}
			Furthermore, if \ts  $a_2>a_2'$ \ts and the segment $CD$ lies strictly to the right of segment $AB$,
			then the segment $AB$ is part of the lower boundary of $\Reg(P)$.
\end{lemma}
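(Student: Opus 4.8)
The plan is to prove the equality lemma in two parts, following the structure of the injection $\vk$ from the proof of Lemma~\ref{l:lattice path bijection 1}, and then extract the geometric consequence when $a_2 > a_2'$.

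\textbf{Equality characterization.} First I would recall that the inequality $\aKr(A,C)\cdot\aKr(B,D) \leqslant \aKr(A',C)\cdot\aKr(B',D)$ was proved via the injection $\vk$ sending a pair $(\gamma:A\to C,\ \delta:B\to D)$ to $(\gamma':A'\to C,\ \delta':B'\to D)$ built by translating $\delta$ up by $\vb=\oa{BA'}$ to get $\wh\delta$, taking the first intersection point $E$ of $\gamma$ with $\wh\delta$, and recombining. Equality (when both sides are nonzero) holds if and only if $\vk$ is a bijection, i.e.\ surjective. The reverse map would take $(\gamma':A'\to C,\ \delta':B'\to D)$, translate $\delta'$ up by $\vb$ to get $\wh{\delta'}:A'\to D'$ lying on or above $C$, find the first intersection $E'$ of $\gamma'$ with $\wh{\delta'}$, and recombine. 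The subtle point — and here is where the hypothesis $a_2'-b_2 > c_2-d_2$ enters — is to see that when this strict inequality holds, the translated path $\wh{\delta'}$ necessarily starts strictly above $C$ (its endpoint $D'$ has height $d_2 + (a_2'-b_2) > d_2 + (c_2-d_2) = c_2$), which forces the recombination to genuinely ``use'' the crossing rather than degenerate. So I would argue: $\vk$ is surjective iff for every valid pair $(\gamma',\delta')$ the preimage pair lies in $\Reg(P)$, and this happens for \emph{all} such pairs iff it happens in particular for the extremal ones, which translates into the requirement that $\Kc(A',C)$ and $\Kc(A,C)$ have the same cardinality and likewise for the $B$-side — equivalently $\aKr(A',C)=\aKr(A,C)$ and $\aKr(B',D)=\aKr(B,D)$. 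The $q$-statement follows because $\vk$ preserves $q$-weight, so a weight-preserving bijection exists iff the two $q$-polynomials on each side coincide; I would phrase this as: equality of products of polynomials in $\nn[q]$ with the componentwise order, combined with a weight-preserving injection, forces componentwise equality factor-by-factor once we know the $q=1$ cardinalities match. The easy direction (the ``if'') is immediate since equal factors give equal products.

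\textbf{The geometric consequence.} For the last sentence, assume $a_2 > a_2'$ (so $A \neq A'$, and in fact $A$ is strictly above $A'$) and that segment $CD$ lies strictly to the right of segment $AB$, and suppose we are in the equality case, so in particular $\aKr(A',C)=\aKr(A,C) > 0$ (nonzero because $\aKr(A,C)$ appears as a factor on a nonzero side). The point is that $\Kc(A,C)$ and $\Kc(A',C)$ have the same size even though $A'$ lies strictly below $A$ on the same vertical line and $C$ is strictly to the right. I would argue there is an injection $\Kc(A',C)\hookrightarrow\Kc(A,C)$ (shift the relevant portion) that fails to be surjective unless every path in $\Kc(A,C)$ actually passes through $A'$ — but a NE path from $A$ to $C$ cannot pass through $A'$ since $A'$ is due south of $A$. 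The resolution is that equality of cardinalities with $A'$ strictly below $A$ can only happen if no NE path from $A$ can dip below the horizontal level of $A$ before moving east, i.e.\ the segment $AB$ must lie on the lower boundary of $\Reg(P)$: otherwise there would be ``room'' below $A$ within $\Reg(P)$ and one could construct a path through $A'\to C$ that does not come from shifting a path through $A\to C$, or conversely exhibit a path $A\to C$ with no counterpart, contradicting $\aKr(A',C)=\aKr(A,C)$. Concretely, I would show: if $AB$ is \emph{not} part of the lower boundary, then the lattice point immediately southeast (or the unit square just below $A$) is in $\Reg(P)$, and using this one builds a path counted by $\aKr(A',C)$ whose preimage under the natural shift is not a valid $A\to C$ path in $\Reg(P)$, or directly shows $\aKr(A',C) > \aKr(A,C)$ by a strict injection — contradiction.

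\textbf{Main obstacle.} The hardest part will be the surjectivity analysis of $\vk$ in the equality case: pinning down exactly why the strict inequality $a_2'-b_2 > c_2-d_2$ is the precise condition under which ``$\vk$ bijective'' decouples into the two separate equalities on the $A$-side and $B$-side, rather than some entangled condition. This requires carefully tracking which configurations $(\gamma',\delta')$ can fail to have a $\Reg(P)$-valid preimage and showing the failure set is empty for \emph{all} valid pairs exactly when both cardinality equalities hold — as opposed to merely the product being equal. The geometric consequence, by contrast, is a fairly direct contradiction argument once the cardinality equality $\aKr(A',C)=\aKr(A,C)$ is in hand and one exploits that NE paths cannot move south.
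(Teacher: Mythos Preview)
Your plan has a circularity that you partially diagnose as the ``main obstacle,'' but it is fatal as stated. You propose first to deduce the decoupled equalities $\aKr(A',C)=\aKr(A,C)$ and $\aKr(B',D)=\aKr(B,D)$ from the product equality, and then to use $\aKr(A',C)=\aKr(A,C)$ to force $AB$ onto the lower boundary. But the decoupling step has no argument behind it: the obvious ``prepend a vertical segment'' maps give only $|\Kc(A,C)|\le |\Kc(A',C)|$ and $|\Kc(B',D)|\le |\Kc(B,D)|$, and a product equality $pr=qs$ together with $p\le q$, $s\le r$ does \emph{not} force $p=q$ and $s=r$. Your sentence about ``equality of products of polynomials \dots\ forces componentwise equality factor-by-factor'' is false in general, and nothing in your outline explains what extra structure of $\vk$ would make it true here.

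The paper avoids this entirely by reversing the order of the two parts: it goes directly from ``$\vk$ is a bijection'' to the geometric conclusion, and only afterwards reads off the decoupled $q$-equalities as a consequence of the geometry. The key idea you are missing is to test surjectivity of $\vk$ on a single, carefully chosen pair of \emph{extremal} paths: the lowest path $\eta:A'\to C$ and the highest path $\xi:B'\to D$ in $\Reg(P)$. Since $\vk$ is onto, $(\eta,\xi)$ has a preimage; unwinding $\vk^{-1}$ forces $\eta$ and the translate $\xi+\vb$ (with $\vb=\oa{B'A}$) to intersect at some first point $E$. The extremality of $\eta$ and $\xi$ then pins $E$ to the lower boundary of $\Reg(P)$, and the maximality of $\xi$ rules out $E$ lying strictly to the right of the segment $AB$, so $E\in AB$ and the whole segment is on the lower boundary. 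Once that is established, every path from $A'$ (resp.\ $B$, $B'$) to the right must begin by climbing vertically to $A$, which immediately gives both $q$-equalities.

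Two smaller issues in your part~2: there is no natural injection $\Kc(A',C)\hookrightarrow\Kc(A,C)$ when $A'$ is below $A$ --- prepending the vertical segment goes the other way --- and NE paths never ``dip below'' anything, so that phrasing is vacuous. Your ``concrete'' plan at the end (if $AB$ is not on the lower boundary then $\aKr(A',C)>\aKr(A,C)$) is a valid contrapositive, but it needs the separate equality $\aKr(A',C)=\aKr(A,C)$ as input, which sends you straight back into the gap above.
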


\smallskip

\begin{proof}
We assume that $a_2>a_2'$ and the segment $CD$ lies strictly to the right of $AB$, as otherwise the lemma is straightforward.
The equality in Lemma~\ref{l:lattice path bijection 1} implies that the map $\vk$ is a bijection. Let \. $\xi:B' \to D$ \. be the highest possible path in $\Reg(P)$ and \. $\eta:A' \to C$ \. be the lowest possible path in $\Reg(P)$, see~Figure~\ref{fig:equality}. Then these paths must be in the image of $\vk$, and their preimages are \. $\wh{\xi}:B \to D$ \. and $\wh{\eta}:A \to C$. Let \. $\vb = \oa{B'A}$.

Following the construction of $\vk^{-1}$, we see that the paths $\eta$ and \ts $\xi + \vb$ \ts must intersect, with $E$ the closest intersection point to $A$. By the minimality of $\eta$ and maximality of $\xi$ in $\Reg(P)$, we have that \ts $\xi +\vb$ \ts is on or above $\eta$. Since
the endpoints of $\xi+\vb$ (i.e. $A$ and~$D'$) are strictly above the endpoints of $\eta$ (i.e. $A'$ and~$C$) by assumptions,
we have  $E$ is contained in lower boundary of~$\Reg(P)$.
Since $\xi$ is below \ts $\xi+\vb$ and is above the lower boundary of \ts $\Reg(P)$,
we have $E$ is contained in $\xi$. Next, we observe that if \ts $E \not \in AB$, then \ts $\wh{\eta}(A,E)$ \ts is strictly above $\xi(B',E)$, which contradicts the maximality of $\xi$ in~$\Reg(P)$. Thus $E$ is contained in $AB$ and is on or above~$A$,
and so the lower boundary of $\Reg(P)$ contains the segment~$AB$.  This completes the proof.
\end{proof}

\begin{figure}
\includegraphics[width=2in]{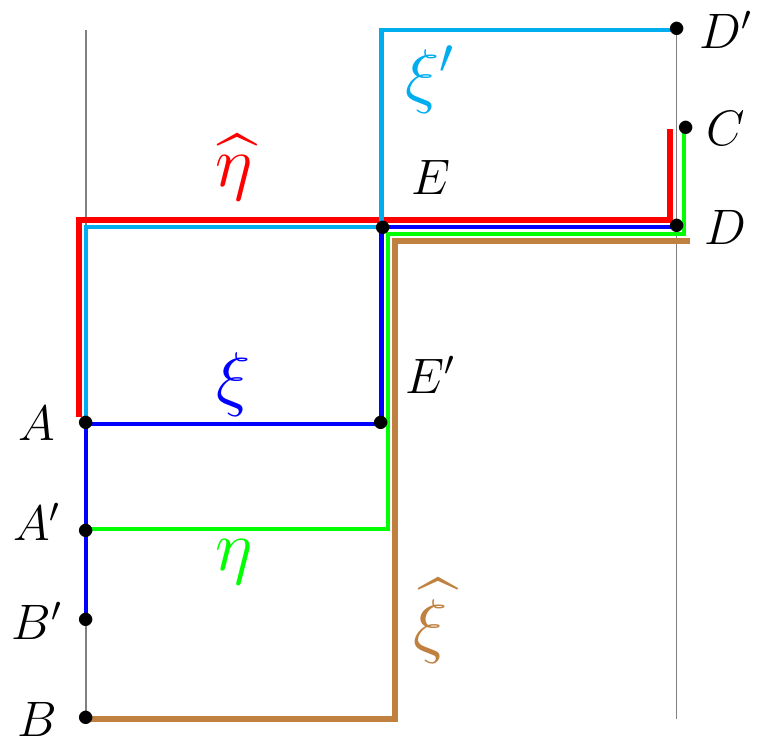}
\caption{The proof of Lemma~\ref{l:bijection 1 equality}.}\label{fig:equality}
\end{figure}

\medskip

The following Lemma treats the special case when $A'=B$ in the Equality Lemma~\ref{l:bijection 1 equality}.
The inequality itself reduces directly to Lindstr\"om--Gessel--Viennot lemma
as the translation vector \. $\vb=\zero$.

\smallskip

\begin{lemma}[{\rm \defn{Special equality lemma}}]\label{l: lgv}
	Let  \.$A,B \in \Reg(P)$ \. be two points on the same vertical line with $A$ above~$B$,
and \. $C,D \in \Reg(P)$ \. points on another vertical line with $C$ above~$D$  to the east
of the line~$AB$. Then:
$$
\aKr_q(A,C)\, \cdot\, \aKr_q(B,D) \ \geqslant \ \aKr_q(B,C) \, \cdot\, \aKr_q(A,D)
$$
with equality if and only if there exists a point $E$ for which every path counted here must pass through, i.e.,
	\begin{alignat*}{2}
		&\aKr_q(A,C) \ = \ \aKr_q(A,E) \. \cdot \. \aKr_q(E,C),  \qquad &&\aKr_q(B,D) \ = \ \aKr_q(B,E) \. \cdot \.  \aKr_q(E,D),  \\
		&\aKr_q(B,C) \ = \ \aKr_q(B,E)  \. \cdot \. \aKr_q(E,C),  \qquad &&\aKr_q(A,D) \ = \ \aKr_q(A,E)  \. \cdot \. \aKr_q(E,D).
	\end{alignat*}
Furthermore, if $CD$ lies strictly to the right of $AB$, then  one of the three conditions hold:
	\begin{itemize}
		\item[{\rm (a)}] \. $E=A$ \. is part the lower boundary of \ts $\Reg(P)$,
		\item[{\rm (b)}] \. $E= D$ \. is part of the upper boundary of \ts $\Reg(P)$,
		\item[{\rm (c)}] \. $E$ is part of the upper and lower boundary of \ts $\Reg(P)$.
	\end{itemize}
\end{lemma}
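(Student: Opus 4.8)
The plan is to obtain the whole statement as the boundary case $A'=B$ (equivalently $B'=A$) of Lemma~\ref{l:lattice path bijection 1} and of Lemma~\ref{l:bijection 1 equality}, the case where the translation vector $\vb=\oa{BA'}$ vanishes and everything reduces to the Lindstr\"om--Gessel--Viennot picture. To line up orientations one applies those lemmas with their ``$(C,D)$'' taken to be the present $(D,C)$: then the inequality hypothesis $a_2'-b_2\geqslant c_2-d_2$ becomes $0\geqslant d_2-c_2$, which holds since $C$ is above $D$; the strict form $a_2'-b_2>c_2-d_2$ needed by Lemma~\ref{l:bijection 1 equality} becomes $0>d_2-c_2$, also true; and the conclusion $\aKr_q(A',C)\,\aKr_q(B',D)\geqslant\aKr_q(A,C)\,\aKr_q(B,D)$ of Lemma~\ref{l:lattice path bijection 1} becomes precisely $\aKr_q(A,C)\,\aKr_q(B,D)\geqslant\aKr_q(B,C)\,\aKr_q(A,D)$, the inequality asserted here. (If $A=B$ or $C=D$ the statement is immediate, so we assume both ``above'' relations are strict.)

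It is worth spelling out the underlying injection, which is short here. Given $\gamma\in\Kc(B,C)$ and $\delta\in\Kc(A,D)$, the two paths must share a lattice point: one of them lies entirely above the other, and comparing on the left line rules out $\gamma$ lying above (it would force $b_2>a_2$), while comparing on the right line rules out $\delta$ lying above (it would force $d_2>c_2$). Let $E$ be the last common point of $\gamma$ and $\delta$, and set $\gamma':=\gamma(B,E)\circ\delta(E,D)\in\Kc(B,D)$ and $\delta':=\delta(A,E)\circ\gamma(E,C)\in\Kc(A,C)$, both in $\Reg(P)$ as concatenations of pieces of $\gamma$ and $\delta$. Past $E$ the pieces $\gamma(E,C)$ and $\delta(E,D)$ meet only at $E$, so $E$ is again the last common point of $\gamma'$ and $\delta'$; hence the same tail swap recovers $(\gamma,\delta)$, so the map is injective. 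It preserves $\wgt$ because $\gamma(B,E)$ and $\delta(A,E)$ end on the same vertical line and thus carry equally many East steps, so the concatenation bookkeeping from the proof of Lemma~\ref{l:lattice path bijection 1} applies verbatim. This gives the inequality.

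For the equality condition, Lemma~\ref{l:bijection 1 equality} in this boundary case states that the equality $\aKr_q(A,C)\,\aKr_q(B,D)=\aKr_q(B,C)\,\aKr_q(A,D)$ holds (coefficient-wise, equivalently at $q=1$) if and only if both sides vanish or $\aKr_q(A,C)=\aKr_q(B,C)$ and $\aKr_q(A,D)=\aKr_q(B,D)$. I would then convert this into the common-point form. Since $A$ lies directly above $B$ on a vertical line and $\Reg(P)$ is column-convex, the vertical segment $BA$ lies in $\Reg(P)$; prepending it gives a $\wgt$-preserving injection $\Kc(A,X)\hookrightarrow\Kc(B,X)$ (the prepended segment has no East step, hence weight $0$), whose image is exactly the set of paths of $\Kc(B,X)$ passing through $A$ (a North--East path from $B$ meeting $A$ must climb straight up to $A$). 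Hence $\aKr_q(A,X)=\aKr_q(B,X)$ iff every path of $\Kc(B,X)$ passes through $A$. Taking $E:=A$ and using $\aKr_q(A,A)=\aKr_q(B,A)=1$, the four product identities in the statement with $E=A$ reduce to exactly the two displayed equalities (the other two being trivial); conversely, if the four identities hold for some $E$, both sides of the inequality collapse to $\aKr_q(A,E)\,\aKr_q(B,E)\,\aKr_q(E,C)\,\aKr_q(E,D)$, so equality holds. The degenerate ``both sides vanish'' case is routine and does not occur in our applications.

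Finally, for the ``Furthermore'': if equality holds and $CD$ lies strictly to the right of $AB$, then we are not in the degenerate case, so $\aKr(A,C),\aKr(B,D),\aKr(B,C),\aKr(A,D)$ are all positive, and the ``Furthermore'' clause of Lemma~\ref{l:bijection 1 equality}, applied in this boundary case (where $a_2>a_2'=b_2$), shows that the segment $AB$ is part of the lower boundary of $\Reg(P)$. In particular $E=A$ lies on the lower boundary, which is alternative~(a), so one of (a)--(c) holds. The step I expect to be most delicate is precisely this conversion from the factored conditions of Lemma~\ref{l:bijection 1 equality} to the single common-point formulation: one must check that the prepend map is weight-preserving and surjects onto the paths through $A$, and that the resulting point $E=A$ simultaneously satisfies all four product identities and, via the ``Furthermore'' clause, lies on the lower boundary. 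The inequality itself is, by contrast, just the classical Gessel--Viennot non-crossing argument.
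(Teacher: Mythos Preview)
Your derivation of the inequality via the Lindstr\"om--Gessel--Viennot tail-swap is fine, and so is the easy direction of the equality criterion (if such an $E$ exists, both sides factor identically). The gap is in the ``only if'' direction and in the ``Furthermore'' clause: you invoke Lemma~\ref{l:bijection 1 equality} in the degenerate instance $A'=B$, $B'=A$, but that lemma does \emph{not} survive this specialization. The paper says so explicitly (``Lemma~\ref{l:bijection 1 equality} does not apply anymore''), and here is a concrete counterexample. Take $\Reg(P)$ with lower boundary $(0,0)\to(5,0)\to(5,5)$ and upper boundary $(0,0)\to(0,2)\to(3,2)\to(3,5)\to(5,5)$, and set $A=(0,2)$, $B=(0,0)$, $C=(3,5)$, $D=(3,2)$. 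Then $\aKr(A,C)=\aKr(A,D)=1$ while $\aKr(B,C)=\aKr(B,D)=10$, so equality holds in Lemma~\ref{l: lgv}, yet $\aKr(A,C)\neq\aKr(B,C)$ and $\aKr(A,D)\neq\aKr(B,D)$; moreover the segment $AB$ is \emph{not} on the lower boundary of $\Reg(P)$. Thus your deduction that ``$E=A$ lies on the lower boundary, which is alternative~(a)'' is simply false in general: here the correct common point is $E=D$, i.e.\ case~(b), and case~(a) fails.

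What the paper does instead is a direct argument that does not pass through Lemma~\ref{l:bijection 1 equality}. Since the tail-swap is a bijection when equality holds, \emph{every} pair $(\gamma',\delta')\in\Kc(A,C)\times\Kc(B,D)$ lies in its image, hence $\gamma'$ and $\delta'$ must intersect. In particular the highest path $\gamma:A\to C$ and the lowest path $\delta:B\to D$ meet at some $E$; extremality of $\gamma$ and $\delta$ then forces every path counted in the lemma through~$E$. The trichotomy (a)--(c) is read off from the location of $E$: if $E$ lies on the segment $AB$ one gets~(a), if on $CD$ one gets~(b), and otherwise $E$ is a pinch point of $\Reg(P)$, giving~(c). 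Your reduction collapses all three outcomes to~(a), which is why it cannot be correct.
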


\begin{proof}
We assume that segment $CD$ lies strictly to the right of $AB$, as otherwise the lemma is straightforward.
First, observe that the inequality follows from Lemma~\ref{l:lattice path bijection 1} by setting \. $A' \leftarrow A$, \. $B' \leftarrow B$ \. and \. $A \leftarrow B$, \. $B \leftarrow A$. In that case the translation vector is zero and we apply the intersection argument directly to the paths \ts $A \to C, B \to D$.
	
To analyze the equality, we notice that Lemma~\ref{l:bijection 1 equality} does not apply anymore,
so a different argument is needed.  The ``only if'' part of the claim is clear.
We now prove the if part.
Let $\gamma:A \to C$ be the highest path within $\Reg(P)$ from $A \to C$,
and let $\delta:B \to D$ be the lowest possible path within $\Reg(P)$ from $B$ to~$D$.
Since the injection $\vk$ in Lemma~\ref{l:lattice path bijection 1} is now a bijection,
it follow that $\gamma$ and $\delta$ intersects at a point $E$.
If $E$ is contained in the segment $AB$ (resp.~$CD$), then
the segment $AB$ (resp.~$CD$) is contained in the lower (resp.~upper)
boundary of $\Reg(P)$ and thus every path counted here must pass through $E=A$
(resp.~$E=D$).  If $E$ is not contained in the segment $AB$ or~$CD$,
then $E$ is an intersection of the upper and lower boundary of \ts $\Reg(P)$,
and every path in \ts $\Reg(P)$ \ts must pass through~$E$.  This completes the proof.
\end{proof}

\bigskip

\section{Stanley's log-concavity} \label{sec:Sta}

Theorem~\ref{t:q-Sta} is a direct Corollary of Theorem~\ref{t:q-KS}
when setting $x$ to be a $\wh{0}$ element in the poset.
But its proof via lattice paths is much more direct, and illustrative,
so we discuss it separately here first.

\subsection{Proof of Theorem~\ref{t:q-Sta}}
Without loss of generality, assume \ts $x \in \Cr_1$, so $x=\al_r$ for some~$r$.
Let \. $Y^{\<k\>} = (r-1,k-r)$, so that the lattice paths corresponding to
linear extensions $L$ with \ts $L(x)=k$ \ts pass through \. $A_1':=Y^{\<k\>}$ \. and
\. $A':=Y^{\<k\>}+\eone$. Let \. $A:= Y^{\<k+1\>} +\eone  = A'+\etwo$, \.
$A_1 := Y^{\<k+1\>}$, \. $B_1:= Y^{\<k-1\>}$, \. $B := B_1+\eone$.
Then the paths with \ts $L(x)=k+1$ \ts pass through $A_1,A$
and the paths with \ts $L(x)=k-1$ \ts pass through $B_1,B$.
We can then write the difference between the left and right hand sides
of inequality~\eqref{eq:q-Sta-ineq} in terms of lattice paths as
\begin{equation}
\begin{split}
\Delta \, := \, \aN_q(k)^2 \, - \, \aN_q(k-1)  \.\cdot \.\aN_q(k+1) \,
= \, q^{2\binom{a+1}{2} + 2k -2r}  \, \times \qquad\qquad\qquad\\
 \times \. \biggl[ \aK_q(\zero, A_1')^2  \.\aK_q( A', Q)^2 \, - \,
\aK_q(\zero, B_1)  \.\aK_q(\zero, A_1) \. \aK_q( B, Q) \. \aK_q(A, Q)  \biggr]\..
\end{split}
\end{equation}

We now apply Lemma~\ref{l:lattice path bijection 1} twice as follows.
Let \ts $B_1'=A_1'$ \ts and \ts $C=D=\zero$. Observe that this configuration matches
the configuration in the Lemma by rotating \ts $\Reg(P)$ \ts by \ts $180^\circ$.
Note that we can apply the lemma since \. $\oa{A_1A_1'}=-\oa{B_1B_1'}=-\etwo$ \.
and \. $a_2'-b_2 =1 \geq 0 = c_2-d_2$.  Thus:
$$
\aK_q(\zero, A_1')^2 \ = \ \aK_q(\zero, A_1') \.\cdot \. \aK_q(\zero, B_1')
\ \geqslant \ \aK_q(\zero, A_1)  \.\cdot \. \aK_q(\zero, B_1).
$$
Similarly, on the other side we apply the lemma with \ts $A'=B'$ \ts and \ts $C=D=Q$,
satisfying the conditions since \. $\oa{AA'} = \etwo = -\oa{BB'}$ \.
and \. $a_2'-b_2=1 > 0=c_2-d_2$. Thus:
$$
\aK_q(A',Q)^2 \ = \ \aK_q( A',Q)  \.\cdot \.\aK_q( B',Q) \ \geqslant \ \aK_q( A,Q) \.\cdot \. \aK_q(B,Q).
$$
Multiplying the last two inequalities we obtain the desired inequality \ts $\Delta \geq 0$.
\qed


\begin{figure}
\includegraphics[width=2in]{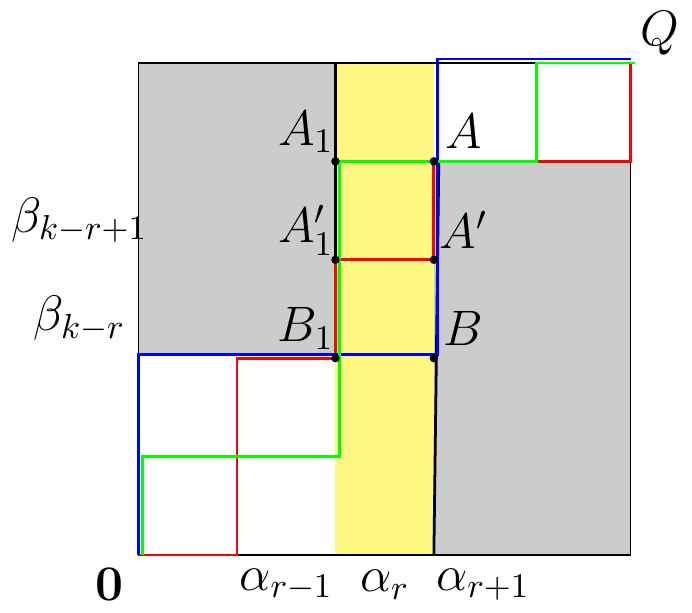}
\caption{The equality case in Stanley's Theorem~\ref{t:q-Sta} leading to the statement in Theorem~\ref{t:q-Sta-equality}, for the element $x=\alpha_r$ and $A_1'=B_1' = Y^{\<k\>}$. The lattice paths equality Lemma~\ref{l:bijection 1 equality} (after $180^\circ$ rotation) implies that all paths passing through $A_1$ also pass through $B_1$, so $A_1B_1$ is part of the upper boundary of $\Reg(P)$ (shaded in gray). Similarly for the paths passing through $A,B$. We see that the square centered at $(r-1+\frac12,k-r+\frac12) \not \in \Reg(P)$, which means that $\al_r \.||\. \beta_{k-r+1}$, and similarly we derive the other conditions.} \label{fig:sta-equality}
\end{figure}

\medskip

\subsection{Proof of Theorem~\ref{t:q-Sta-equality}}
It is clear that \eqref{itemequality Stanley d} $\Rightarrow$ \eqref{itemequality Stanley c}, \eqref{itemequality Stanley d} $\Rightarrow$ \eqref{itemequality Stanley b}, \eqref{itemequality Stanley c} $\Rightarrow$ \eqref{itemequality Stanley a},
and \eqref{itemequality Stanley b} $\Rightarrow$ \eqref{itemequality Stanley a}.
We now show that \eqref{itemequality Stanley a} $\Rightarrow$ \eqref{itemequality Stanley d}.	
In the proof of the Stanley inequality, notice that the equality is achieved exactly
when all applications of Lemma~\ref{l:lattice path bijection 1} lead to equalities.
For the equality in the first application of Lemma~\ref{l:lattice path bijection 1}, we have:
$$
\aK_q(\zero, A_1') \, \aK_q(\zero, B_1') \ = \ \aK_q(\zero, A_1) \, \aK_q(\zero, B_1).
$$
This equality case is covered by Lemma~\ref{l:bijection 1 equality}
(after $180^\circ$ rotation), which implies that the segment \ts $A_1B_1$ \ts
is part of the upper boundary of \ts $\Reg(P)$
 (which is the condition after rotating by $180^{\circ}$).
 The second application of Lemma~\ref{l:lattice path bijection 1}
 implies that~$AB$ is part of the lower boundary of \ts $\Reg(P)$.
 Thus every path \ts $\zero \to Q$ \ts passes on or below $B_1$ and on or above~$A$.
 Hence \. $q\aN_q(k-1) = \aN_q(k) = q^{-1}\aN_q(k-1)$,
 where the factors of~$q$ arise from the different horizontal levels
 of the path passing  from the \ts $A_1B_1$ \ts segment to the $AB$ segment.

We now show that \eqref{itemequality Stanley a} $\Rightarrow$ \eqref{itemequality Stanley e}.
Since the lattice paths and \ts $\Reg(P)$ \ts correspond to the poset structure,
we can restate the above conditions on poset level. The fact that \ts $A_1B_1$ \ts
is an upper boundary of \ts $\Reg(P)$ \ts implies that the element \.
$\beta_{k-r} \succ \alpha_{r-1}$. The fact that \. $BB_1, AA_1 \subset \Reg(P)$ \.
implies that \. $\beta_{k-r}, \beta_{k-r+1}$ \. are not comparable to~$\alpha_r$.
Finally, $AB$ on the lower boundary of \ts $\Reg(P)$ \ts implies \.
$\alpha_{r+1} \succ \beta_{k+1-r}$.

We now show \eqref{itemequality Stanley e} $\Rightarrow$ \eqref{itemequality Stanley b}
(cf.\ Proposition~\ref{prop:equality-KS-c->b}  for a proof of the analogous implication
for Kahn--Saks equality for general posets).
 Denote \ts $\cN(i) :=\{L\in \Ec(P)~:~L(x)=i\}$, so that \ts $\aN(i)=|\cN(i)|$.
Let $L \in \cN(i)$.
It follows from \eqref{itemequality Stanley e} that \ts $L(\beta_{k-r})=k-1$ \ts  and  \ts $x  \. || \. \beta_{k-r}$.
Thus there is an injection \. $\cN(k) \to \cN(k-1)$ \. by relabeling \ts $x \leftrightarrow \beta_{k-r}$,
 so that \ts $L(x)=k-1$ and $L(\beta_{k-r})=k$.  Thus, \ts $\aN(k) \le \aN(k-1)$.  Similarly, we obtain
 \ts $\aN(k) \le \aN(k+1)$ \ts  by relabeling \ts $x \leftrightarrow \beta_{k-r+1}$.  However, by the Stanley
 inequality (Theorem~\ref{t:Sta}), we have \. $\aN(k)^2 \ts \ge \ts \aN(k-1)\. \aN(k+1)$, implying
 that all inequalities are in fact  equalities.
\qed

\bigskip

\section{Proof of Theorem~\ref{t:q-KS}}\label{s:q-KS-proof}

For a given integer $w\in \nn$,
let \ts $\aF_q(w;k)$ \ts be the $q$-weighted sum
\begin{align*}
\aF_q(w;k) \ := \  \sum_L \, q^{\wgt(L)}\,,
\end{align*}
where the sum is over all linear extensions \ts $L\in \Ec(P)$,
such that \ts $L(x)=w$ \ts and \ts $L(y)=w+k$.  By definition,
\begin{align*}
\aF_q(k) \ = \ \sum_{w \in \nn} \,  \aF_q(w;k)\ts.
\end{align*}
We can thus express the difference
\begin{equation}\label{eq:KS-diff-sums}
\begin{split}
\Delta \ :&= \ \aF_q(k) \.\cdot\.\aF_q(k)  \ - \ \aF_q(k-1) \.\cdot\.\aF_q(k+1) \\
&= \ \sum_{v, \, v' \in \Zb} \, \aF_q(v;k) \.\cdot\. \aF_q(v';k) \ - \  \aF_q(v;k-1) \.\cdot\.\aF_q(v';k+1) \\
&= \  \sum_{u>w-1} \. S(u;w) \ + \ \frac12 \. \sum_{u=w-1} \. S(u;w),
\end{split}
\end{equation}
where we have grouped the terms in the expansions of products of \ts $\aF_q(*;*)$ \ts using
\begin{equation}\label{eq:Suw}
\begin{split}
S(u;w) \ = & \ \aF_q(u;k) \.\cdot \. \aF_q(w;k)  \, + \, \aF_q(w-1;k) \.\cdot \.\aFr_q(u+1;k) \ \\
&- \ \aF_q(u;k+1) \.\cdot \. \aF_q(w;k-1) \, - \, \aF_q(u+1; k-1)\.\cdot \. \aF_q(w-1;k+1).
\end{split}
\end{equation}
In order to verify the identity~\eqref{eq:KS-diff-sums}, let \ts $u\geq  w-1$.
Note that by setting \. $v \leftarrow w$, \. $v'\leftarrow u$ \. into the first term,
and setting \. $v \leftarrow u+1$, \. $v' \leftarrow w-1$ \. into the second term of~\eqref{eq:Suw},
 we cover the cases \ts $v' \geq v-1$ \ts and \ts $v \geq v'+1$ \ts in the positive summands
 in~\eqref{eq:KS-diff-sums}, where the double appearance of $v'=v-1$ is balanced out by the factor $\frac{1}{2}$. Similarly, for the negative terms, setting \. $v' \leftarrow u$, \.
 $v \leftarrow w$ \. covers the terms \ts $v' \geq v-1$, and setting \ts $v' \leftarrow w-1$,
 \ts $v \leftarrow u+1$ \ts covers the terms \ts $v-1 \geq v'$.
 Formally, we have:
$$
\aligned
& \sum_{u>w-1} \, \aF_q(u;k) \.\cdot \. \aF_q(w;k)  \, + \, \aF_q(w-1;k) \.\cdot \. \aF_q(u+1;k) \\
& \qquad  = \ \sum_{v' \geq v} \, \aF_q(v;k) \.\cdot \. {}\aF_q(v^\prime;k)
\ + \ \sum_{v \geq v^\prime+2} \, \aF_q(v;k) \.\cdot \. \aF_q(v^\prime;k)
\endaligned
$$
Similarly, we have:
\begin{align*}
& \sum_{u>w-1} \, \aF_q(u;k+1) \.\cdot \.  \aF_q(w;k-1) \, + \, \aF_q(u+1; k-1)\.\cdot \.  \aF_q(w-1;k+1) \\
& \qquad = \ \sum_{v' \geq v}\, \aF_q(v';k+1) \.\cdot \.  \aF_q(v;k-1) \  + \ \sum_{v \geq v'+2} \, \aF_q(v;k-1)\.\cdot \.  \aF_q(v';k+1),
\end{align*}
and the remaining case of \ts $v'=v-1$ \ts comes from \. $\frac12 S(u;u+1)$.

%
%

\begin{figure}
\includegraphics[width=5in]{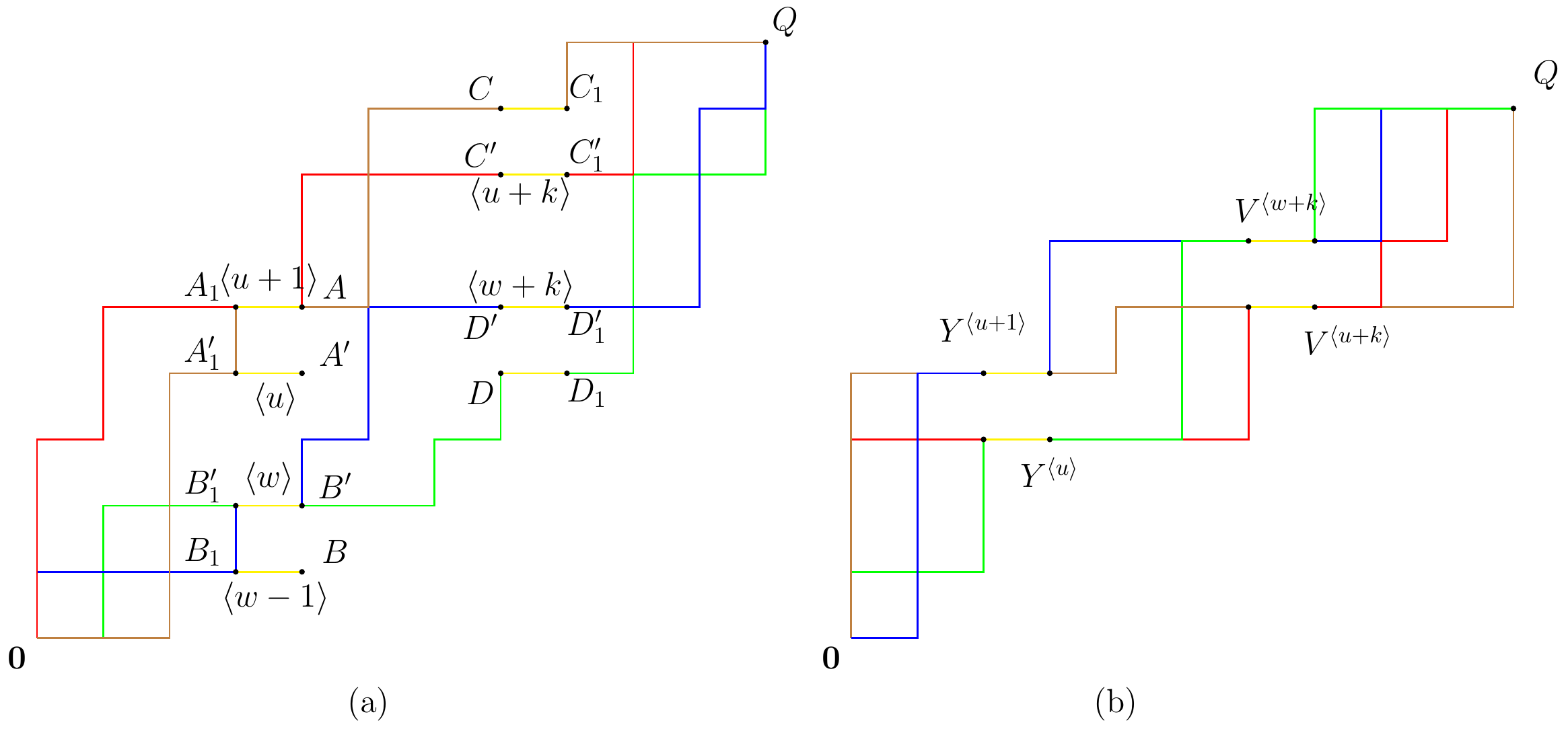}
\caption{The proof of Theorem~\ref{t:q-KS} in the case when $x,y\in \Cr_1$. Not all paths in $S(u;w)$ are drawn to avoid overcluttering. Figure (b) shows the paths involved in $S(u;u+1)$, which is the difference between the $q$-weighted sum of (red, blue) pairs minus the $q$-weighted sum of (green, brown) pairs.
 }\label{fig:KS-proof}
\end{figure}

\smallskip

We now prove that \. $S(u;w) \geqslant 0$ \. for all \ts $u \geq w-1$ \ts appearing in~\eqref{eq:KS-diff-sums}.
 Suppose \. $x,y \in \Cr_1$ \. so \. $x=\alpha_s$, and \. $y=\alpha_{s+r}$. 
For \ts $u\in \Zb$, let \. $\Yu := (s-1, u-s)$ \. and \. $\Vu: = (s+r-1,u-(s+r))$, that is, if a linear extensions has $L(x)=w$ then its lattice path passes through $Y^{\<w\>}, Y^{\<w\>}+\eone$, and if $L(y) = w+k$ then it passes through $V^{\<w+k\>},V^{\<w+k\>}+\eone$.

 In terms of lattice paths, we have:
\begin{align*}
\aF_q(w;k) \ = \  q^{\binom{\ana+1}{2}+2w+k} \, \aK_q\big(\zero, Y^{\<w\>}\big) \, \aK_q\bigl(Y^{\<w\>}+\eone,V^{\<w+k\>}\bigr) \,  \aK_q\bigl(V^{\<w+k\>}+\eone,Q\bigr) .
\end{align*}
Let first $u>w-1$ and for simplicity label the following points $A_1 = Y^{\<u+1\>}$, $A= Y^{\<u+1\>} + \eone$, $B_1 = Y^{\<w-1\>}$, $B=B_1+\eone$ and their shifts by $\pm \etwo$ as $A'_1 = \Yu $, $A' = A - \etwo = A'_1 + \eone$, $B_1' = \Yw$ and $B'=B_1' + \eone = B+\etwo$. Similarly, let $C=V^{\<u+k+1\>}$, $C_1 = C+\eone$, $C'= C-\etwo = V^{\<u+k\>}$, $C_1' = C_1 -\etwo$ and $D=V^{\<w+k-1\>}$, $D' = D+\etwo$ , $D_1 = D+\eone$ and $D_1' = D_1 +\etwo$.

Thus, letting \. $\ell = 2\binom{a+1}{2} + 2u+2w+2k$, we can expand $S(u;w)$ and regroup its terms as follows:
\begin{equation}\label{eq:S_delta}
\begin{split}
S(u;w) \. q^{-\ell} \ = &  \ \aK_q(\zero,A_1) \, \aK_q(\zero,B_1) \, \aK_q(A,C) \, \aK_q(B,D) \, \aK_q(C_1,Q) \, \aK_q(D_1,Q)\\
 & \quad + \,  \aK_q(\zero,A'_1) \, \aK_q(\zero,B'_1)  \, \aK_q(A',C') \, \aK_q(B',D') \,  \aK_q(C_1',Q) \,  \aK_q(D_1',Q)\\
&\quad - \, \aK_q(\zero,A_1) \, \aK_q(\zero,B_1)  \, \aK_q(A,C') \, \aK_q(B,D')  \, \aK_q(C'_1,Q)  \, \aK_q(D'_1,Q) \\
&\quad - \, \aK_q(\zero,A'_1) \, \aK_q(\zero,B'_1)  \, \aK_q(A',C) \, \aK_q(B',D)  \, \aK_q(C_1,Q)  \, \aK_q(D_1,Q) \\
\ = &  \ \Delta_1(\zero;A_1/A'_1,B'_1/B_1)  \, \Delta_1(A',B';C/C',D'/D;Q) \\
& \quad +   \, \aK_q(\zero,A_1)  \, \aK_q(\zero,B_1) \,  \Delta_1(A/A',B'/B;C',D')  \,  \Delta_1(C/C',D'/D;Q) \\
& \quad +  \, \aK_q(\zero,A_1) \, \aK_q(\zero,B_1) \,  \Delta_2(A,B;C,D) \,  \aK_q(C_1,Q) \, \aK_q(D_1,Q).
\end{split}
\end{equation}
Here the $\Delta$ notation means that we take differences of paths passing through either $E$ or $E'$ when using the $E/E'$, and $\Delta_2$ plays the role of a second derivative.
Specifically, the restructured terms above are given as follows, they are each nonnegative by our lattice paths lemmas:
\begin{align*}
\Delta_1(\zero;A_1/A'_1,B'_1/B_1) \ &: = \ \aK_q(\zero,A_1')\, \aK_q(\zero,B_1') \,- \,\aK_q(\zero,A_1)\,\aK_q(\zero,B_1) \ \geqslant_{\text{Lem}~\ref{l:lattice path bijection 1}} \ 0,\\
\Delta_1(A',B';C/C',D'/D;Q)\ &: = \ \aK_q(A',C',Q)\,\aK_q(B',D',Q)\, -\, \aK_q(A',C,Q)\, \aK_q(B',D,Q)  \ \geqslant_{\text{(see below)}} \ 0,\\
\Delta_1(C_1/C'_1,D'_1/D_1;Q) \ &: = \  \aK_q(C'_1,Q) \,\aK_q(D'_1,Q) \, -\,  \aK_q(C_1,Q) \,\aK_q(D_1,Q) \ \geqslant_{\text{Lem}~\ref{l:lattice path bijection 1}} \ 0,\\
\Delta_1(A/A',B'/B;C',D') \ &: = \  \aK_q(A',C')\,\aK_q(B',D') \, -\,  \aK_q(A,C') \,\aK_q(B,D')\ \geqslant_{\text{Lem}~\ref{l:lattice path bijection 1}} \ 0,\\
\Delta_2(A,B;C,D) \ &: = \ \aK_q(A,C)\, \aK_q(B,D)\, + \,  \aK_q(A',C')\,\aK_q(B',D')  \\
\qquad &\qquad  - \, \aK_q(A,C')\, \aK_q(B,D') \, - \, \aK_q(A',C)\,\aK_q(B',D) \ \geqslant_{\text{Lem}~\ref{l:path-averages}} \ 0.
\end{align*}
Here the second inequality follows by applying Lemma~\ref{l:lattice path bijection 1} twice:
$$
\aK_q(A',C') \ \aK_q(B',D') \ \geqslant \ \aK_q(A',C) \ \aK_q(B',D) \quad \text{ and } \quad \aK_q(C'_1,Q)\ \aK_q(D'_1,Q) \ \geqslant \  \aK_q(C_1,Q)\ \aK_q(D_1,Q).
$$

Now  let \ts $u=w-1$, we set \ts $Y':= \Yu+\eone$ \ts and \. $Y:= \Yw +\eone = Y^{\<u+1\>}+\eone = Y'+\etwo$,
and \. $V':=V^{\<u+k\>}$ \. and \. $V:=V^{\<u+k+1\>}=V'+\etwo$. Then:
\begin{equation}\label{eq: S lgv}
\aligned
\frac12 \. S(u;u+1) \ & = \, \aF_q(u;k) \, \aF_q(u+1;k)  \, - \, \aF_q(u;k+1) \, \aF_q(u+1;k-1)\\
& = \, q^{2\binom{\ana+1}{2}+4u+2+2k} \, \aK_q\big(\zero, \Yw\big) \, \aK_q\big(\zero, \Yu\big) \, \aK_q\big(V+\eone,Q\big)\, \aK_q\big(V'+\eone,Q\big) \ \times \\
& \qquad\qquad \times \ \biggl[ \aK_q(Y , V)\, \aK_q (Y',V') \, - \,  \aK_q(Y' ,V)\,\aK_q (Y, V') \biggr]
\  \geqslant_{\text{Lem~\ref{l: lgv}}} \  0\ts.
\endaligned
\end{equation}
This completes the proof. \qed
\bigskip

\section{Proof of Theorem~\ref{t:q-KS-equality}}\label{s:q-KS-equality}

\subsection{Setting up the proof}\label{ss:q-KS-setup}
It is clear that  \ts \eqref{itemequality q-linear} $\Rightarrow$ \eqref{itemequality q-KS}, \ts
\eqref{itemequality q-linear} $\Rightarrow$ \eqref{itemequality linear},
\ts \eqref{itemequality q-KS} $\Rightarrow$ \eqref{itemequality KS},
and \ts \eqref{itemequality linear} $\Rightarrow$ \eqref{itemequality KS}.

For \ts \eqref{itemequality combinatorial} $\Rightarrow$ \eqref{itemequality q-linear}, we adapt the proof of
Proposition~\ref{prop:equality-KS-c->b} below, of the analogous implication for general posets.
Without loss of generality, we assume that $z=x$ and $x \in \Cr_1$.
Then  \eqref{itemequality combinatorial}  implies that,
given a  linear extension \ts $L\in \Ec(P)$ \ts with \ts $L(y)-L(x)=k$, we can obtain
linear extension \ts $L'\in \Ec(P)$ \ts with \. $L'(y)-L'(x)=k-1$,
and linear extension \ts $L''\in \Ec(P)$ \ts with \. $L''(y)-L''(x)=k+1$,
by switching element~$x$ with the succeeding and preceding element in~$L$, respectively.
This map is clearly an injection that changes the $q$-weight by a factor of \ts $q^{\pm 1}$, so we have
\[
\aF_q(k-1)  \ \geqslant \ q \. \aF(k) \quad \text{and} \quad \aF_q(k+1)  \ \geqslant \ q^{-1} \. \aF(k).
\]
Since we also have \. $\aF_q(k)^2 \geq \aF_q(k-1) \aF_q(k+1)$ \. by Kahn--Saks Theorem~\ref{t:q-KS},
we conclude that equality occurs in the equation above, which proves~\eqref{itemequality q-linear}.

The proof of \ts \eqref{itemequality KS} $\Rightarrow$ \eqref{itemequality combinatorial} \ts will occupy
the rest of this section.  Together with the implications above, this implies the theorem.

\medskip

\subsection{Lattice paths interpretation} \label{ss:q-KS-paths}

Suppose  that $x=\alpha_s$ and $y=\alpha_{s+r}$.
We will assume without loss of too much generality that $r>1$,
so that the boundary between the region of $x$ and the region of $y$ does not overlap.
This allows us to apply the combinatorial interpretation in Lemma~\ref{l:bijection 1 equality} and Lemma~\ref{l: lgv}.
We remark that the method described here still applies to the case $r=1$ (by a slight modification of Lemma~\ref{l:bijection 1 equality} and Lemma~\ref{l: lgv}), and we  omit the details here for brevity.

The idea of the proof is as follows.  Informally, we will show that condition (a) implies that the regions above $x$ or $y$ is a vertical strip of width 1, that is the upper and lower boundary above $x$ and above $y$ are at distance 1 from each other, see Figure~\ref{fig:sta-equality}. These strips extend to the levels for which there exist a linear extension
\ts $L\in \Ec(P)$  \ts with \ts $L(y)-L(x)=k\pm 1$ (see the full proof for precise description in each possible case). In order to show this, we analyze the proof of Theorem~\ref{t:q-KS} in Section~\ref{s:q-KS-proof}. In order to have equality we must have \ts $S(u;w)=0$  \ts for every \ts $u \geq w-1$.  So we apply the equality conditions from Lemmas~\ref{l:bijection 1 equality} and~\ref{l: lgv} for every inequality involved in the proofs of \ts $S(u;w) \geq 0$. These equality conditions impose restrictions on the boundaries of  \ts $\Reg(P)$, making them vertical at the relevant levels above $x$ and $y$, and ultimately  drawing the width-1 vertical strip. This analysis requires choosing special points \ts $u$ and~$w$ from Section~\ref{s:q-KS-proof}, and the application of the equality Lemmas requires certain conditions. Thus there are several different cases which need to be considered.

In order to apply this analysis we parameterize  \ts $\Reg(P)$  \ts above $x$ and~$y$ as follows.
Let $u_0$ be the smallest possible value \. $L(x)=L(\al_s)$ \. can take, i.e.\
\. $(Y^{\<u_0\>},  Y^{\<u_0\>}+\eone)$ \. is a segment in the lower boundary of \ts $\Reg(P)$,
see Figure~\ref{fig:KS-equality}.
Let \ts $u_1-1$ \ts be the largest possible value that \ts $L(\al_{s-1})$ \ts can take, i.e.\ \.
$(Y^{\<u_1\>}-\eone, Y^{\<u_1\>})$ \. is a segment in the upper boundary of  \ts $\Reg(P)$.
Let   \ts $u_2+1$  \ts be the smallest possible value \ts $L(\al_{s+1})$ \ts can take,
i.e.\ \. $(Y^{\<u_2\>}+\eone, Y^{\<u_2\>}+2\eone)$ \. is a segment in the lower boundary of  \ts $\Reg(P)$.
Finally, let $u_3$ be the largest possible value $L(x)$ can take, so \.
$(Y^{\<u_3\>}, Y^{\<u_3\>}+\eone)$ \. is a segment in  the upper boundary of~$\Reg(P)$.
Clearly we have \ts $u_0 \leq u_1$ \ts and \ts $u_2 \leq u_3$.
Similarly, let \ts $w_0+k$ \ts be the smallest possible value  \ts $L(y)$  \ts can take, so this gives the level of the lower boundary of $\Reg(P)$ above~$y$. Finally, let  \ts $w_1+k-1$  \ts be the largest possible value  \ts $L(\alpha_{r+s-1})$  \ts can take, let  \ts $w_2+k+1$  \ts be the smallest possible value \ts $L(\alpha_{r+s+1})$  \ts can take, and  \ts $w_3+k$  \ts be the largest possible value  \ts $L(y)$ \ts  can take.
Clearly, we have \. $w_0\leq w_1$ \. and \. $w_2 \leq w_3$.

Here we are only concerned with \emph{effectively possible} values of~$u$,
i.e.\  values for which there exist linear extensions with  \ts $L(x)=u$ \ts
and \ts $L(y)-L(x) \in [k-1,k+1]$. We can thus restrict our region above $x$ and~$y$,
as follows. If we had  \ts $w_0 -u_0>1$, then  \ts $\aF(u_0;j) =0$  \ts for  \ts $j\in \{k-1, k, k+1\}$,
since $L(y) \leq u_0+k+1< w_0+k$. Thus we can assume that the region above $x$ starts at  \ts $L(x)=w_0-1$. Similarly, if  \ts
$w_0-u_0<-1$, we can restrict the region above $y$ accordingly. Thus we can assume \.
$|w_0-u_0| \leq 1$. Similarly, we can apply the same argument to the upper boundaries,
and assume that \. $|w_3-u_3|\leq 1$.  Finally,
let \ts $\vmax$ \ts be the largest integer such that  \ts $\aF(\vmax;k)>0$,
and let \ts $\vmin$ \ts be the smallest integer such that  \ts $\aF(\vmin;k)>0$.
Note that  \ts  $\vmax=\min\{u_3,w_3\}$  \ts and \ts  $\vmin=\max \{u_0,w_0\}$.

\begin{figure}[ht!]
	\includegraphics[width=2in]{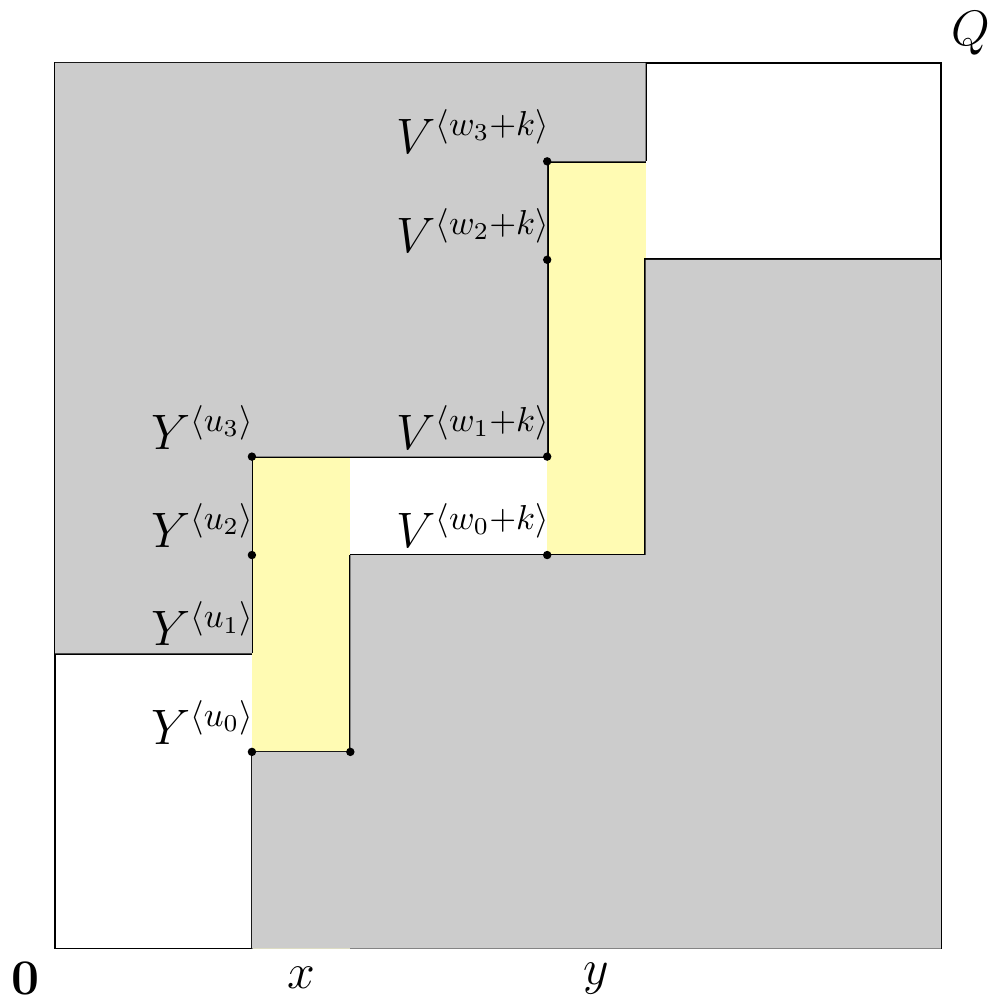}
	\caption{The structure of $\Reg(P)$ in the analysis of the Kahn--Saks equality.
		Here  \ts $k=5$, $u_0=5$, $u_1=6$, $u_2=7$, $u_3=8$,  \ts and  \ts  $w_0=5$, $w_1=6$, $w_2=8$, $w_3=9$.
	}\label{fig:KS-equality}
\end{figure}

\nin
In the language of lattice paths,
condition \eqref{itemequality combinatorial} follows from showing either of the following:
\begin{enumerate}
	[{label=\textnormal{({S\arabic*})},
		ref=\textnormal{S\arabic*}}]
	\item \label{itemS1}
	For every \. $v \in [\vmin, \vmax]$,
	we have $Y^{\<v-1\>}$ is contained in the upper boundary of $\Reg(P)$,
	and
	 $Y^{\<v+1\>} +\eone$ is contained in the lower boundary of $\Reg(P)$.
	\item \label{itemS2}
	For every \. $v \in [\vmin, \vmax]$,
		we have  $Y^{\<v+k-1\>}$ is contained in the upper boundary of $\Reg(P)$,
	and $Y^{\<v+k+1\>} +\eone$ is contained in the lower boundary of $\Reg(P)$.
\end{enumerate}

\nin
Note that these condition imply the width-1 vertical strip above $x$ or $y$ for all relevant values. It also implies that \. $L^{-1}(v \pm 1)\in \Cr_2$ \. and that \. $L^{-1}(v \pm 1) \. || \. x$ \.  since \. $Y^{\<v\pm 1\>}, \. Y^{\<v\pm 1\>} + \eone \in \Reg(P)$.

 For the rest of the section,
 we write \ts
 $A':=Y^{\<u\>}$, \ts $B':= Y^{\<w\>}$, and let the notations \.
   $A,B,A_1, B_1, A_1', B_1'$, $C,D, C', D', C_1, D_1, C_1',D_1'$ \. be  as in the proof of Theorem~\ref{t:q-KS} in  Section~\ref{s:q-KS-proof}. The choices of $u$ and $w$ will be chosen separately for each case of consideration.
   We also write \ts $m:=u_3-u_0$ \ts and \ts $m':=w_3-w_0$.

\smallskip

We split the proof  into different cases, depending on the values of \.
 $m$, $m'$, $u_1-u_0$,  $w_3-w_2$, $u_0-w_0$, and \ts $u_3-w_3$.

\medskip

\subsection{The cases $m\geq 2$, $u_0<u_1$ or  $m'\geq 2$, $w_2 <w_3$.}

We will now prove that \eqref{itemS2} holds for the first case.
The second case is analogous, after \ts $180^\circ$ \ts rotation of the configuration,
and leads to~\eqref{itemS1}.

Note that  \ts $\aF(u_0+1;k) >0$  \ts since there is a linear extensions  \ts $L(x)=u_0+1$
\ts and \. $L(y)=u_0+k+1 \in[ w_0+k, w_3+k]$.
We then have:
\[
u_0  \ \leq \  \vmin  \ \leq \  u_0+1 \ \leq \ u_1 \qquad \text{and} \qquad u_0+1  \ \leq \  \vmax \ \leq u_3\..
\]

We now turn to the proof of the inequality in Section~\ref{s:q-KS-proof}, and
notice  that equality in~\eqref{eq:KS-ineq} would be achieved  only if  \ts $S(u;w)=0$.
Let \ts  $u:=\vmax$  \ts and  \ts $w:=\vmin$.
Since  \ts $S(u;w)=0$,  this means that
\begin{align*}
\Delta_1(\zero;A_1/A'_1,B'_1/B_1) \, =\, 0 \qquad &\text{or} \qquad
\Delta_1(A',B';C/C',D'/D;Q) \, = \, 0.
\end{align*}

 Now note that  by Lemma~\ref{l:bijection 1 equality} we must have \. $\Delta_1(\zero;A_1/A'_1,B'_1/B_1) >0$,
 since the condition of $A_1B_1$ being part of $\Reg(P)$'s boundary is not satisfied: $B_1 = Y^{\<w-1\>}$ is not part of the upper boundary of $\Reg(P)$ since $w \leq u_1$.  Thus we must have $\Delta_1(A',B';C/C',D'/D;Q) =0$. This  implies that
\begin{equation}\label{eqKSequality condition 1}
	\aK(A',C')\, \aK(B',D')\, \aK(C_1',Q)\, \aK(D'_1,Q) \ = \ \aK(A',C)\, \aK(B',D) \, \aK(C_1,Q)\, \aK(D_1,Q).
\end{equation}

Let us show that every terms in the left side of~\eqref{eqKSequality condition 1} is nonzero.
Suppose otherwise, that \ts $\aK(A',C')=0$ (the other cases are treated analogously).
By the monotonous boundaries of \ts $\Reg(P)$, we must have $A'$ or $C'$ not in
\ts $\Reg(P)$, contradicting the choice of $u$ since there are linear extensions
with \ts $L(x) = u$ \ts and  \ts $L(y) = y+k$.

Therefore, we must have equality in both applications of Lemma~\ref{l:lattice path bijection 1}, so we can apply the Equality Lemma~\ref{l:bijection 1 equality} to both terms in \eqref{eqKSequality condition 1} (one after $180^\circ$ rotation).
These equalities imply that \. $CD=Y^{\<\vmax+k+1\>} \, Y^{\<\vmin+k-1\>}$ \. is part of the upper boundary of $\Reg(P)$, and that \. $C_1D_1=(Y^{\<\vmax+k+1\>} +\eone) \, (Y^{\<\vmin+k-1\>}+\eone)$ \. is part of the lower boundary of $\Reg(P)$.
This implies~\eqref{itemS2}.

For the rest of the proof, we can assume that $w_2=w_3$ if $m'\geq 2$ and $u_0=u_1$ if $m\geq 2$.

\medskip

\subsection{The case $m\geq 2$,  $u_0=u_1$, $u_3>w_3$}

Since $u_3>w_3$, we have that $w_3= \vmax$ and $u_3= \vmax+1$.
Let $u:=\vmax$ and $w:=\vmax$.
Since $m\geq 2$ we have that $A_1,B_1 \in \Reg(P)$, and since $w_3<u_3$ we have that $CC_1 \not \in \Reg(P)$.
Thus we have:
\[
\aK(\zero,A_1), \, \aK(\zero,B_1), \, \aK(C_1',Q), \, \aK(D_1',Q) \ > \ 0
\qquad \text{and} \qquad \aK(A,C) =   \aK(A',C)  \ = \ 0.
\]
We will first show that the segment $AB$ is contained in the lower boundary of $\Reg(P)$.

Since $S(u;w)=0$, the vanishing of the second summand in~\eqref{eq:S_delta} implies that either
\begin{equation*}
	\aK(\zero,A_1)\.\cdot\. \aK(\zero,B_1)=0, \ \quad \text{or} \ \quad  \Delta_1(C_1/C'_1,D'_1/D_1;Q) = 0, \ \quad \text{or} \ \quad
	\Delta_1(A/A',B'/B;C',D')= 0.
\end{equation*}
The first product is nonzero from above.  Below we show that \ts $\Delta_1(C_1/C'_1,D'_1/D_1;Q) \ne 0$,
implying that \ts $\Delta_1(A/A',B'/B;C',D')= 0$.

Note that the expression for $S(u;w)$ is implicitly over paths containing the entire horizontal segments above $x,y$. That is, in equation~\ref{eq:S_delta}, there is a summand containing $\aK(*,C)$ if and only if it also contains $\aK(C_1,Q)$, because the whole expression counts paths passing through $CC_1$. Thus, we can replace $\aK(C_1,Q)$ everywhere by $\wh{\aK}(C_1,Q):=\aK(C,C_1,Q)$.
With this replacement we have that $\wh{\aK}(C_1,Q)=0$ since $C \not \in \Reg(P)$ and so:
\begin{align*}
	 \aK(C'_1,Q)  \. \cdot \. \aK(D'_1,Q) \ >\ 0   \ = \  \wh{\aK}(C_1,Q) \. \cdot \. \aK(D_1,Q).
\end{align*}
This implies that \ts $\Delta_1(C_1/C'_1,D'_1/D_1;Q)\ne 0$, and, therefore, \ts $\Delta_1(A/A',B'/B;C',D')=0$.
This in turn implies that $AB$ is contained in the lower boundary of \ts $\Reg(P)$ \ts
by the Equality Lemma~\ref{l:bijection 1 equality}.

Now note that,
since $AB$ is in the lower boundary of \ts $\Reg(P)$,
every path in \ts $\Reg(P)$ \ts must pass through \. $A=Y^{\<\vmax +1\>}+\eone = Y^{\<u_3\>}+\eone $.
Also note that, since $u_0=u_1$, we have \. $Y^{\<u_0\>}\, Y^{\<u_0+1\>}$ \. is in the upper boundary of \ts $\Reg(P)$,  so every path in \ts $\Reg(P)$ \ts must pass through
$Y^{\<u_0\>}$.
These two properties imply that paths differ only by the level of their horizontal segment above $x$ and so
\begin{equation}\label{eqFK calculation 1}
\begin{split}
		\aF(v;k-1) \ &= \ \aF(v-1;k) \quad \text{ for every } \quad v \in [u_0+1,u_3]\.,\\
	\aF(v;k+1) \ &= \ \aF(v+1;k) \quad \text{ for every } \quad v \in [u_0,u_3-2]\..
\end{split}
\end{equation}
We will use ~\eqref{eqFK calculation 1} to show that \ts $\vmin=u_0+1$.

Suppose first that $\vmin = u_0$.
Then \eqref{eqFK calculation 1} gives us
\begin{align*}
	\aF(k-1) \ &= \ \sum_{v=u_0+1}^{u_3} \, \aF(v;k-1) \ = \ \sum_{v=u_0+1}^{u_3} \, \aF(v-1;k) \ = \ \sum_{v=u_0}^{u_3-1} \, \aF(v;k) \ =  \ \aF(k),\\
	\aF(k+1) \ &= \  \sum_{v=u_0}^{u_3-2} \, \aF(v;k+1) \ = \ \sum_{v=u_0}^{u_3-2} \, \aF(v+1;k) \ = \ \sum_{v=u_0+1}^{u_3-1} \,\aF(v;k) \ <  \ \aF(k).
\end{align*}
So we have \. $\aF(k)^2 > \aF(k-1) \. \aF(k+1)$, a contradiction.

Then  suppose  that $\vmin = u_0-1$.
Then \eqref{eqFK calculation 1}   gives us
\begin{align*}
	\aF(k-1) \ &= \ \sum_{v=u_0}^{u_3} \aF(v;k-1) \ = \ \sum_{v=u_0+1}^{u_3} \aF(v-1;k)  \ + \ \aF(u_0;k-1) \ = \ \sum_{v=u_0}^{u_3-1} \aF(v;k)  \, + \, \aF(u_0;k-1) \\
	 &=  \ \aF(k) \ + \ \aF(u_0;k-1),\\[2 pt]
	\aF(k+1) \ &= \  \sum_{v=u_0}^{u_3-2} \aF(v;k+1) \ = \ \sum_{v=u_0}^{u_3-2} \aF(v+1;k) \ = \ \sum_{v=u_0+1}^{u_3-1} \aF(v;k) \ =  \ \aF(k) \, -  \, \aF(u_0;k).
\end{align*}
 On the other hand, since \ts $\vmin =u_0 -1$  \ts and  \ts $\vmax=u_3-1$, we then have \ts  $m'=m\geq 2$, so we can without loss of generality assume that  \ts $w_2=w_3$ from the conclusion of the previous subsection.
Since \ts  $w_2=w_3$, we then have:
\[\aF(u_0;k-1) \  \leq \  \aF(u_0;k).  \]
Combining these two equations, we then have
\begin{align*}
	\aF(k-1) \.\cdot\. \aF(k+1) \ = \ \big[\aF(k) +  \aF(u_0;k-1)\big] \.\cdot\. \big[\aF(k)  -  \aF(u_0;k)\big] \ \leq \ \aF(k)^2 \ - \  \aF(u_0;k)^2 \ < \ \aF(k)^2\.,
\end{align*}
which is another contradiction.
Hence, since \. $\vmin\in [u_0-1,u_0+1]$, we conclude that we must have  \ts $\vmin = u_0+1$.

Now recall that the combinatorial properties say that
\. $Y^{\<u_0\>}=Y^{\<\vmin-1\>}$ \. is contained in the upper boundary of \ts $\Reg(P)$,
and \. $Y^{\<\vmax+1\>}+\eone$ \. is contained in the lower boundary of \ts $\Reg(P)$.
This implies \eqref{itemS1}, as desired.

An analogous conclusion  can be derived for the case  $u_0>w_0$ by applying the same argument.
Finally,
by the 180$^\circ$ rotation, an analogous conclusion can be drawn
for the case \ts $u_3<w_3$ \ts and/or \ts $u_0< w_0$.
Hence for the rest of the proof we can assume that \ts $u_0=w_0$
\ts and \ts $u_3=w_3$ \ts if \ts $m\geq 2$.
\medskip

\subsection{The case $m\geq 2$, $u_0=u_1$, $w_2=w_3$, $u_0=w_0$, $u_3=w_3$}
\label{subsecequality no overlapping boundaries}

Note that in this case \. $m=u_3-u_0=w_3-w_0=m'$, \. $\vmin=u_0=w_0$ \. and \. $\vmax=u_3=w_3$.
We will show that this case leads to a contradiction.

\smallskip

\nin
{\bf Claim:} Either the segment $(Y^{\<\vmax\>}+\eone, Y^{\<\vmin\>}+\eone)$ is contained in the lower boundary of $\Reg(P)$,
or the segment $(V^{\<\vmax+k\>}, V^{\<\vmin+k\>})$ is contained in the upper boundary of $\Reg(P)$.

\smallskip

To prove the claim,
let first \ts $u:=\vmax-1$ \ts and \ts $w:=\vmax$.
Since \ts $S(u;u+1)=0$, we get from equation~\eqref{eq: S lgv} that
\begin{align*}
	\aK(Y , V)\.\cdot\. \aK (Y',V')\  =\  \aK(Y' ,V)\.\cdot\.\aK (Y, V'),
\end{align*}
where \. $Y= Y^{\<\vmax\>}+\eone$, \. $Y'= Y^{\<\vmax-1\>}+\eone$ \.
and \. $V= V^{\<\vmax+k\>}$, \. $V'= V^{\<\vmax+k-1\>}$.
It then follows from Special Equality Lemma~\ref{l: lgv} that
there exists a point $E$ for which every path counted here must pass through,
and there are three subcases:

\begin{enumerate}
\item $E$ is equal to $A:=Y^{\<\vmax\>}+\eone$ and is contained in the lower boundary of $\Reg(P)$,

\item $E$ is equal to  $D:=V^{\<\vmax\>}$ and is contained in the upper boundary of $\Reg(P)$,

\item  $E$ is contained in the upper and lower boundary of $\Reg(P)$ (which then necessarily intersect).
\end{enumerate}

\smallskip

\nin {\bf Case (iii).} Suppose that $E$ is contained in  the upper and lower boundary of $\Reg(P)$, and in particular every path in $\Reg(P)$ must pass through $E$.
We now change our choice of $u$ and $w$ to  $u:=\vmax-1$ and $w:= \vmin +1$.  Note that here \. $AB=(Y^{\<\vmax\>}+\eone, Y^{\<\vmin\>}+\eone)$ \. and  \. $CD=(V^{\<\vmax+k\>}, V^{\<\vmin+k\>})$.
Observe that from $m \geq 2$ we have $u \geq w$. It follows from $S(u;w)=0$ and equation~\eqref{eq:S_delta} that $\Delta_2(A,B;C,D)=0$.
Rewriting $\Delta_2(A,B;C,D)$ using the intersection point $E$, we get
\[
\Delta_2(A,B;C,D) \ = \ \big[\aK(A,E)\aK(B,E) - \aK(A',E) \aK(B',E)\big]
\.\cdot\. \big[\aK(E,C)\aK(E,D) - \aK(E,C')\aK(E,D')\big].
\]
One of the factors must be zero,   so suppose that
\[
\aK(A,E)\.\cdot\.\aK(B,E) \, - \, \aK(A',E)\.\cdot\. \aK(B',E) \ = \ 0.
\]
By applying the Equality Lemma~\ref{l:bijection 1 equality}, we then have that
$AB=(Y^{\<\vmax\>}+\eone, Y^{\<\vmin\>}+\eone)$ is contained in the lower boundary of \ts $\Reg(P)$, as desired.
The case
$$
\aK(E,C)\.\cdot\.\aK(E,D) \, - \, \aK(E,C')\.\cdot\.\aK(E,D')\ = \ 0.
$$
uses a similar argument.  In that case, we conclude that \.
$(V^{\<\vmax+k\>}, V^{\<\vmin+k\>})$ \. is contained in the upper boundary of \ts $\Reg(P)$ \ts
instead, which proves the claim.

\smallskip

\nin {\bf Case (i).} Suppose that
$E$ is equal to \. $A=Y^{\<\vmax\>}+\eone$
and is contained in the lower boundary of \ts $\Reg(P)$.
Then it follows that
the segment \. $(Y^{\<\vmax\>}+\eone,Y^{\<\vmin\>}+\eone)$ \.
is contained in the lower boundary of \ts $\Reg(P)$, as desired.


\smallskip

\nin {\bf Case (ii).}  Suppose that $E$ is equal to
$D=V^{\<\vmax+k-1\>}$
and is contained in the upper boundary of $\Reg(P)$.
This implies  that $(V^{\<\vmax+k\>}, V^{\<\vmax+k-1\>})$ is contained in the upper boundary of $\Reg(P)$.
By $180^\circ$ rotation and using the same argument,
we can without loss of generality also assume that
$(Y^{\<\vmin+1\>}+\eone, Y^{\<\vmin\>}+\eone)$ is contained in the lower boundary of $\Reg(P)$.
Now let $u:=\vmax-1$ and $w:= \vmin +1$.
It again follows from $S(u;w)=0$ that $\Delta_2(A,B;C,D)=0$.
Since $(V^{\<\vmax+k\>}, V^{\<\vmax+k-1\>})$ is contained in the upper boundary of $\Reg(P)$,
we have:
\[ \aK(A',C') \ = \ \aK(A',C), \qquad \aK(A,C') \ = \ \aK(A,C). \]
Since $(Y^{\<\vmin+1\>}+\eone, Y^{\<\vmin\>}+\eone)$ is contained in the lower boundary of $\Reg(P)$,
we have:
\[ \aK(B',D') \ = \ \aK(B,D'), \qquad \aK(B',D) \ = \ \aK(B,D). \]
It then follows that $\Delta_2(A,B;C,D)$ can be rewritten as
\begin{align*}
	\Delta_2(A,B;C,D)  \ &=  \ \aK(A,C)\.\cdot\. \aK(B,D) \, + \, \aK(A',C)\.\cdot\.\aK(B,D')
\\ & \qquad \quad  - \ \aK(A,C)\.\cdot\. \aK(B,D') \, -\, \aK(A',C)\.\cdot\. \aK(B,D) \\
	& = \ \big( \aK(A,C) \.-  \. \aK(A',C)\big) \, \big( \aK(B,D)\. - \.  \aK(B',D)\big).
\end{align*}
Without loss of generality, assume that $\aK(A,C)-  \aK(A',C) =0$.
This implies that the segment \.
$AA'=(Y^{\<\vmax\>}+\eone, Y^{\<\vmax-1\>}+\eone)$ \. is contained in the lower boundary of \ts $\Reg(P)$,
which in turn implies that \,
$(Y^{\<\vmax\>}+\eone, Y^{\<\vmin\>}+\eone)$  \.
is contained in the lower boundary of \ts $\Reg(P)$.
This concludes the proof of the claim.

\medskip

Applying the claim, let
$(Y^{\<\vmax\>}+\eone, Y^{\<\vmin\>}+\eone)$  be contained in the lower boundary of $\Reg(P)$, the other case are treated analogously.
Note that we also have that $(Y^{\<\vmax\>}, Y^{\<\vmin\>})$ is contained in the upper boundary of $\Reg(P)$ since $u_0=u_1$.
 This implies that
 \begin{align*}
 	\aF(v;k+1) \ &= \ \aF(v+1;k)  \quad \text{ for every } v \in [\vmin, \vmax-1],\\
 	 \aF(v;k-1) \ &= \ \aF(v-1;k)  \quad \text{ for every } v \in [\vmin+1, \vmax].
 \end{align*}
We then have
\begin{align*}
	\aF(k+1) \ &= \ \sum_{v=\vmin}^{\vmax-1} \aF(v;k+1) \ = \ \sum_{v=\vmin}^{\vmax-1} \aF(v+1;k) \ = \ \sum_{v=\vmin+1}^{\vmax} \aF(v;k) \ < \ \aF(k),\\
	\aF(k-1) \ &= \ \sum_{v=\vmin+1}^{\vmax} \aF(v;k-1) \ = \ \sum_{v=\vmin+1}^{\vmax} \aF(v-1;k) \ = \ \sum_{v=\vmin}^{\vmax-1} \aF(v;k) \ < \ \aF(k).
\end{align*}
So we have $\aF(k)^2 > \aF(k-1) \aF(k+1)$, a contradiction. Hence this case does not lead to equality.

\medskip

\subsection{The case $m< 2$ and  $m'< 2$}
We now check the last remaining cases of Theorem~\ref{t:q-KS-equality}.

We first consider the case \ts $m=0$. We have \ts $L(x)=u=u_0=u_3$ \ts
is the unique possible value. Then, for every $k\in \nn$, we have:
 \[\aF(k) \ = \ \aN(k+u_3), \]
where \ts $\aN(j)$ \ts is the number of linear extensions
\ts $L\in \Ec(P)$ \ts for which \ts $L(y)=j$.
It then follows from the combinatorial description of
Theorem~\ref{t:q-Sta-equality} that \eqref{itemS2} holds.
By the same argument, we get an analogous conclusion for the case \ts $m'=0$.

\smallskip

We now consider the case \ts
$m=m'=1$.
First note that, if either \ts $w_0=u_0+1$ \ts or \ts $w_0=u_0-1$,
then we either have \ts $\aF(k-1)=0$ \ts or \ts $\aF(k+1)=0$,
which contradicts the assumption that \ts $\aF(k)>0$.
So we assume \ts $w_0=u_0$.
 Let \ts $u:=u_0$ \ts and \ts $w:=u_0+1$.
 By using \ts $S(u;u+1)=0$, from this part of the proof in Section~\ref{s:q-KS-proof},
 we have an application of Lemma~\ref{l: lgv}. By its equality criterion
 we see that there exists a point~$E$ for which every path counted here must pass through.
We now set for brevity
\[a \ := \  \aK(\zero, A_1, A, E), \quad b \ := \ \aK(\zero, B_1, B, E), \quad  c \ := \ \aK(E, C, C_1, Q), \quad d \ := \ \aK(E, D, D_1, Q).  \]
Using this notation, we have
\[ \aF(k) \ = \ ac\. + \.  bd, \quad \aF(k+1) \ = \ bc,  \quad \aF(k-1) \ = \ ad. \]
Then
\[ \aF(k)^2 \, - \, \aF(k+1) \.\cdot\. \aF(k-1) \ = \  (ac)^2 \. + \. (bd)^2 \. + \. acbd. \]
This equation is equal to zero only if \ts $ac= bd =0$,
which implies that \ts $\aF(k)=0$, a contradiction.
This completes the proof of \ts \eqref{itemequality KS} $\Rightarrow$ \eqref{itemequality combinatorial},
and finishes the proof of Theorem~\ref{t:q-KS-equality}. \qed

\bigskip

\section{Multivariate generalization}\label{sec:multi}

The $q$-weights in the introduction can be refined as follows.
Let \. $\bq:= (q_1,\ldots,q_{\ana})$ \. be formal variables.
Define the \defn{multivariate weight} of a linear extension \ts
$L \in \Ec(P)$ \ts as
$$
\bq^L \, := \, \prod_{i=1}^\ana \, q_i^{L(\al_i) \ts - \ts L(\al_{i-1})},
$$
where we set \ts $L(\al_0):=0$. In the language of lattice paths we see that the
power of \ts $q_i$ \ts is equal to one plus the number of vertical steps on the
vertical line passing through~$(i-1,0)$.

\medskip

\begin{thm}[{\rm \defn{Multivariate Stanley inequality}}]\label{t:bq-Sta}
Let \ts  $P=(X,\prec)$ \ts be a finite poset of width two,
let \ts $(\Cr_1,\Cr_2)$ \ts be the chain partition of~$P$, and let \ts $x\in \Cr_1$.
Define
$$
\aNr_\bq(k) \, := \, \sum_{L\in \Ec(P) \ : \ L(x)=k} \, \bq^{L}\..
$$
Then:
\begin{equation}\label{eq:bq-Sta-ineq}
\aNr_\bq(k)^2 \ \geqslant \ \aNr_\bq(k-1) \, \aNr_\bq(k+1) \quad \ \text{ for all} \ \quad k\.>\.1 \ts,
\end{equation}
where the inequality between polynomials in the variables \. $\bq=(q_1,\ldots,q_{\ana})$ \. is coefficient-wise.
\end{thm}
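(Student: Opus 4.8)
The plan is to rerun the lattice-path proof of the $q$-Stanley inequality (Theorem~\ref{t:q-Sta}), keeping track of the finer column-indexed statistics, and to check that its one ingredient — the Chung--Fishburn--Graham type injection of Lemma~\ref{l:lattice path bijection 1} — respects them. As observed just before the statement, for a NE lattice path $\zeta$ the exponent of $q_i$ in the multivariate weight is $1+n_i(\zeta)$, where $n_i(\zeta)$ is the number of north steps of $\zeta$ on the vertical line $\{x=i-1\}$. Accordingly, set $\bq^\zeta:=\prod_{i=1}^{\ana}q_i^{\,n_i(\zeta)}$ and, for $A,B\in\Reg(P)$, let $\aK_\bq(A,B):=\sum_\zeta\bq^\zeta$ over NE paths $\zeta:A\to B$ inside $\Reg(P)$. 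Two features matter: $\bq^\zeta$ is multiplicative under concatenation of paths, and it is invariant under \emph{vertical} translation of $\zeta$, since a vertical shift never moves a step to a different column.

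Using the bijection of Lemma~\ref{l:interpretation lattice path} and splitting each such path at the column through $x=\al_r$ (exactly as in the proof of Theorem~\ref{t:q-Sta}), one obtains the identity
\[
\aNr_\bq(k)\ =\ \Bigl(\textstyle\prod_{i=1}^{\ana}q_i\Bigr)\,\aK_\bq\bigl(\zero,\,Y^{\<k\>}\bigr)\,\aK_\bq\bigl(Y^{\<k\>}+\eone,\,Q\bigr),
\]
with $Y^{\<k\>}=(r-1,k-r)$ and $Q=(\ana,\bnb)$; the prefactor $\prod_i q_i$ absorbs the ``$+1$''s and, unlike in the univariate case, is independent of~$k$ because $\bq^\zeta$ counts only north steps. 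Next I would upgrade Lemma~\ref{l:lattice path bijection 1} to the multivariate weight: under its hypotheses $\aK_\bq(A',C)\,\aK_\bq(B',D)\geqslant\aK_\bq(A,C)\,\aK_\bq(B,D)$ coefficient-wise. This uses the same injection $\vk$: since $A,A',B,B'$ are collinear on a vertical line, the shift $\vb=\oa{BA'}$ is vertical, so $\wh\delta=\delta+\vb$ and all reassembled pieces of $\gamma',\delta'$ are vertical translates of pieces of $\gamma,\delta$; as vertical translation preserves each $n_i$ exactly, $\bq^{\gamma'}\bq^{\delta'}=\bq^\gamma\bq^\delta$, so $\vk$ is weight-preserving.

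With this in hand I would copy the proof of Theorem~\ref{t:q-Sta} line by line with $q\leftarrow\bq$ and $\aK_q\leftarrow\aK_\bq$. After cancelling the common factor $\bigl(\prod_i q_i\bigr)^2$, the difference of the two sides of~\eqref{eq:bq-Sta-ineq} equals
\[
\aK_\bq(\zero,A_1')^2\,\aK_\bq(A',Q)^2\ -\ \aK_\bq(\zero,B_1)\,\aK_\bq(\zero,A_1)\,\aK_\bq(B,Q)\,\aK_\bq(A,Q),
\]
with the points as in that proof, and the two applications of the multivariate Lemma~\ref{l:lattice path bijection 1} there (one with $B_1'=A_1'$, $C=D=\zero$; one with $A'=B'$, $C=D=Q$) yield $\aK_\bq(\zero,A_1')^2\geqslant\aK_\bq(\zero,A_1)\,\aK_\bq(\zero,B_1)$ and $\aK_\bq(A',Q)^2\geqslant\aK_\bq(A,Q)\,\aK_\bq(B,Q)$; multiplying these two coefficient-wise inequalities proves~\eqref{eq:bq-Sta-ineq}. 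Specializing $q_i\mapsto q^{\ana+1-i}$ recovers Theorem~\ref{t:q-Sta} for $x\in\Cr_1$, so this is a genuine refinement.

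The only delicate point is making sure the path surgery interacts correctly with the \emph{individual} variables, not merely with total degree: every translation used by $\vk$ must keep each step in its column, and in the first application Lemma~\ref{l:lattice path bijection 1} is applied to paths read from the opposite endpoint (equivalently, to the $180^\circ$-rotated region), which only relabels $q_i\leftrightarrow q_{r+1-i}$ and hence preserves coefficient-wise inequality. Horizontal translations would ruin this, but none occur anywhere in the argument; and the case $r=1$ (no $\al_{r-1}$) is handled exactly as in the univariate proof.
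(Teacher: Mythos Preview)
Your proposal is correct and takes essentially the same approach as the paper. The paper's proof (Section~\ref{sec:multi}) is a one-paragraph sketch observing that the lattice-path injections of Lemma~\ref{l:lattice path bijection 1} reassign pieces of paths only via vertical translation, hence preserve the number of north steps above each point $(i,0)$ and therefore the multivariate weight~$\bq^L$; you have simply filled in the details the paper omits, including the explicit $k$-independent prefactor and the check that the $180^\circ$ rotation in the first application amounts to a harmless relabeling of variables.
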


\medskip

When \ts $q_1=q_2=\ldots = q$, we obtain Theorem~\ref{t:q-Sta}.
Similarly, the following result generalizes both Theorem~\ref{t:q-KS} and Theorem~\ref{t:bq-Sta}.

\medskip

\begin{thm}[{\rm \defn{Multivariate Kahn--Saks inequality}}]\label{t:bq-KS}
Let \ts  $P=(X,\prec)$ \ts be a finite poset of width two,
let \ts $(\Cr_1,\Cr_2)$ \ts be the chain partition of~$P$, and let
\ts $x,y\in \Cr_1$ \ts be two distinct elements.  Define:
$$
\aFr_\bq(k) \ := \ \sum_{L\in \Ec(P) \ : \ L(y)-L(x)=k} \, \bq^{L}\..
$$
Then:
\begin{equation}\label{eq:bq-KS-ineq}
\aFr_\bq(k)^2 \, \geqslant \, \aFr_\bq(k-1) \. \aFr_\bq(k+1) \quad \ \text{for all} \ \quad k\. > \. 1\ts,
\end{equation}
where the inequality between polynomials in the variables $\bq=(q_1,\ldots,q_{\ana})$ is coefficient-wise.
\end{thm}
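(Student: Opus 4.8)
The plan is to carry over, essentially word for word, the proof of Theorem~\ref{t:q-KS} in Section~\ref{s:q-KS-proof}, after upgrading the three lattice-path lemmas of Sections~\ref{sec:basic} and~\ref{sec:toolkit} to a multivariate form. For $A,B \in \Reg(P)$ set
$$
\aKr_\bq(A,B) \ := \ \sum_{\zeta \in \Kc(A,B)} \bq^{\zeta},
$$
where $\bq^\zeta$ records, for each $i \in [\ana]$, the number of vertical (North) steps of $\zeta$ lying on the line $x = i-1$; then $\bq^{\phi(L)}$ agrees with $\bq^L$ up to the monomial $q_1 \cdots q_\ana$, and more generally the weight of a path broken at prescribed lattice points factors as an explicit $\bq$-monomial, depending only on the break points, times the product of the $\bq$-weights of the pieces, exactly as in the univariate case. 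I claim that Lemma~\ref{l:lattice path bijection 1}, the Criss-cross Lemma~\ref{l:path-averages}, and the Special equality Lemma~\ref{l: lgv} all remain valid verbatim with $\aKr_q$ replaced by $\aKr_\bq$ and ``$\geqslant$'' now read coefficientwise in $\bq = (q_1,\ldots,q_\ana)$.

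The reason these lemmas transfer is that every injection used in their proofs --- the intersection-and-reglue map $\vk$ of Lemma~\ref{l:lattice path bijection 1}, the six-piece recombination of Lemma~\ref{l:path-averages}, and the tail-swap of Lemma~\ref{l: lgv} --- is assembled from just two operations: cutting paths into subpaths at common lattice points, and translating some of those subpaths by a \emph{purely vertical} vector $\vb$ (indeed $\vb = \oa{BA'}$, $\oa{AB'}$, and $\zero$ respectively, each vertical by the hypotheses placing $A,A',B,B'$ on one vertical line). A vertical translation leaves the $x$-coordinate of every step unchanged, so it preserves the column-by-column count of North steps; and cutting-and-regluing merely redistributes the North steps of the input pair of paths among the output pair. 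Hence $\bq^{\gamma}\bq^{\delta} = \bq^{\gamma'}\bq^{\delta'}$ column by column, so each injection is multidegree-preserving --- which is precisely what a coefficientwise inequality in $\bq$ demands. Setting $q_1 = \cdots = q_\ana = q$ recovers the ``the $q$-weight is preserved'' sentence concluding each of those proofs.

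With the multivariate lemmas available, I would repeat the computation of Section~\ref{s:q-KS-proof} line by line (recall the hypothesis places $x = \al_s$, $y = \al_{s+r}$ in $\Cr_1$, so the lattice-path setup there applies directly). Writing $\aFr_\bq(w;k)$ for the $\bq$-weighted count of $L \in \Ec(P)$ with $L(x) = w$ and $L(y) = w+k$, one has $\aFr_\bq(k) = \sum_w \aFr_\bq(w;k)$ and the same telescoping identity~\eqref{eq:KS-diff-sums} expressing $\Delta := \aFr_\bq(k)^2 - \aFr_\bq(k-1)\aFr_\bq(k+1)$ as $\sum_{u>w-1} S(u;w) + \tfrac12 \sum_{u = w-1} S(u;w)$. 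For $u > w-1$ the rearrangement~\eqref{eq:S_delta} of $S(u;w)$ into the building blocks $\Delta_1(\cdots)$ and $\Delta_2(\cdots)$ is purely formal and goes through unchanged; each block is now $\geqslant 0$ coefficientwise in $\bq$ by the multivariate Lemmas~\ref{l:lattice path bijection 1} and~\ref{l:path-averages}. For $u = w-1$, equation~\eqref{eq: S lgv} displays $\tfrac12 S(u;u+1)$ as a $\bq$-monomial times a bracket that is $\geqslant 0$ by the multivariate Lemma~\ref{l: lgv}. Summing yields $\Delta \geqslant 0$ coefficientwise, which is the assertion. Theorem~\ref{t:bq-Sta} is proved the same way from the multivariate Lemma~\ref{l:lattice path bijection 1}, exactly as Theorem~\ref{t:q-Sta} is proved in Section~\ref{sec:Sta}, and is also the special case $y = \wh 1$ of the above.

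The one genuinely delicate point --- and essentially the only one --- is the bookkeeping of these monomial prefactors. In the univariate proof the factorization is encoded by the identity $\aF_q(w;k) = q^{\binom{\ana+1}{2}+2w+k}\,\aK_q(\zero, Y^{\langle w\rangle})\,\aK_q(Y^{\langle w\rangle}+\eone, V^{\langle w+k\rangle})\,\aK_q(V^{\langle w+k\rangle}+\eone, Q)$; for $\bq$ one must replace the scalar exponent $\binom{\ana+1}{2}+2w+k$ by the corresponding monomial in $q_1,\ldots,q_\ana$. The observation that makes this harmless is that this monomial is determined entirely by the coordinates of the break points $Y^{\langle w\rangle}$ and $V^{\langle w+k\rangle}$, and that as $w$ and $k$ range over the values occurring in a fixed $S(u;w)$ these break points move \emph{only vertically}; consequently, after aligning corresponding break points, the same $\bq$-monomial prefactor multiplies all four products in~\eqref{eq:S_delta}, and similarly in~\eqref{eq: S lgv}, and factors out cleanly, reducing the nonnegativity of $S(u;w)$ to the multivariate lemmas. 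Beyond this routine accounting, no new idea is needed.
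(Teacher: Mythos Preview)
Your proposal is correct and matches the paper's own argument essentially line for line: the paper simply observes that the injections in Lemmas~\ref{l:lattice path bijection 1} and~\ref{l:path-averages} (and implicitly Lemma~\ref{l: lgv}) rearrange and vertically translate path segments, so the number of North steps above each point $(i,0)$ is preserved, whence the multivariate weight $\bq^L$ is preserved and the proof of Theorem~\ref{t:q-KS} carries over. Your only excess caution is the last paragraph: with your own definition of $\bq^\zeta$ (counting North steps column by column), the weight is genuinely multiplicative under concatenation, so the prefactor is the constant $q_1\cdots q_\ana$ per full path and the four terms of $S(u;w)$ share the common factor $(q_1\cdots q_\ana)^2$ with no further bookkeeping required.
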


\medskip

For the proof, note that in the case \ts $x,y \in \Cr_1$,
the lattice paths lemmas in Subsections~\ref{ss:lemmas-ineq} and~\ref{sec:toolkit-criss-cross}
rearrange and reassign pieces of paths via vertical translation. Thus, we preserve the total
number of vertical segments above each \ts $(i,0)$ \ts in each pair of paths.
Therefore, the resulting injections preserve the multivariate weight \ts $\bq^L$, and
both theorems follow.  We omit the details.

\medskip

\begin{rem} {\rm
Note that, in general, this function is not quasi-symmetric in $q_1,q_2,\ldots$\., much less symmetric.
This generalization is different from the \emph{quasisymmetric functions} associated to $P$-partitions,
see e.g.~\cite[\S 7.19]{Sta}.  Still, the multivariate polynomials in the theorems can be
expressed in terms of the (usual) \emph{symmetric functions} in certain cases.

For example, let \ts $P$ \ts be the parallel product of two chains \ts
$\Cr_1$ \ts and \ts $\Cr_2$ \ts
of sizes \ts $\ana$ and $\bnb$, respectively.  Clearly, \ts $e(P)=\binom{\ana+\bnb}{\ana}$ \ts
in this case.  Fix \ts $x=\al_s$ \ts and \ts $y=\al_{r+s}$.
Then we have:
$$
\aF_\bq(k) \, = \, \sum_j \, h_j(q_1,\ldots,q_s) \, h_{k-r}(q_{s+1},\ldots,q_{s+r}) \, h_{\bnb-k+r-j}(q_{r+s+1},\ldots,q_{\ana}),
$$
where  \. $h_{i}(x_1,\ldots,x_k)$  \. is the \emph{homogeneous symmetric function} of degree~$i$, see e.g.~\cite[\S 7.5]{Sta}.
Similarly, from Section~\ref{s:q-KS-proof}, we have:
$$\aligned
\frac12 \. S(u;u+1) \, & = \, h_u(q_1,\ldots,q_s) \, h_{u+1}(q_1,\ldots,q_s) \, h_{k-1-r-u}(q_{s+r+1},\ldots,q_\ana) \, \times \\
& \hskip1.cm \times \,
h_{k-r-u}(q_{s+r+1},\ldots,q_\ana) \, s_{(k-r)^2}(q_{s+1},\ldots,q_{s+r}).
\endaligned
$$
The $\Delta$ terms involved in the other $S(u;w)$ can be similarly expressed in terms of
\emph{Schur functions}~$s_\la$ as in the formula above.
We leave the details to the reader.
} \end{rem}

\bigskip

\section{General posets}\label{sec:posets}

\subsection{Equality conditions in the Stanley inequality}\label{ss:posets-sta}
As in the introduction, let \ts $P=(X,\prec)$ \ts be a poset on $n$ elements.
Denote by \ts $f(u) := \bigl|\{v \in X\,{}:\,{}v\prec u\}\bigr|$ \ts and \ts
$g(u) := \bigl|\{v \in X\,{}:\,{}v\succ u\}\bigr|$ \ts the sizes of lower
and upper ideals of \ts $u\in X$, respectively, excluding the element~$u$.

\smallskip

\begin{thm}[{\rm \defn{Equality condition for the Stanley inequality}~\cite[Thm~15.3]{SvH}}]\label{t:Sta-equality-gen}
Let \ts  $P=(X,\prec)$ \ts be a finite poset, and let \ts $x\in X$.
Denote by \ts $\aNr(k)$ \ts the number of linear extensions \ts $L\in \Ec(P)$,
such that \ts $L(x)=k$.  Suppose that \ts $k \in \{1,\ldots,n-1\}$ \ts and \ts $\aNr(k)>0$. Then the following are equivalent:
\begin{enumerate}
			[{label=\textnormal{({\alph*})},
		ref=\textnormal{\alph*}}]
\item \ $\aNr(k)^2 \. = \. \aNr(k-1) \. \aNr(k+1)$,
\item \ $\aNr(k) \. = \. \aNr(k+1) \. = \. \aNr(k-1)$,
\item \ $f(y)>k$ \ts for all \ts $y\succ x$, and \ts $g(y)>n-k+1$, for all \ts $y\prec x$.
\end{enumerate}
\end{thm}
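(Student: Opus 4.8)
The plan is to establish the cycle of implications $(\mathrm{b})\Rightarrow(\mathrm{a})\Rightarrow(\mathrm{c})\Rightarrow(\mathrm{b})$. Of these, $(\mathrm{b})\Rightarrow(\mathrm{a})$ is immediate, since $\aNr(k-1)=\aNr(k)=\aNr(k+1)$ trivially forces $\aNr(k)^2=\aNr(k-1)\aNr(k+1)$. We may also assume $\aNr(k-1)>0$ and $\aNr(k+1)>0$: when one of these vanishes, $k$ is an endpoint of the support of $\aNr$, and a direct inspection disposes of such cases. The content then splits between the elementary implication $(\mathrm{c})\Rightarrow(\mathrm{b})$, which I would prove by an explicit transposition bijection, and the deep implication $(\mathrm{a})\Rightarrow(\mathrm{c})$, which I would import from~\cite{SvH} or~\cite{CP}; the latter is, and I expect it to remain, the genuine obstacle.

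For $(\mathrm{c})\Rightarrow(\mathrm{b})$, the key observation is that condition~(c) makes the operation ``transpose $x$ with its right neighbour'' a bijection from $\{L\in\Ec(P):L(x)=k\}$ onto $\{L\in\Ec(P):L(x)=k+1\}$. To see it is well defined, take $L$ with $L(x)=k$ and set $y:=L^{-1}(k+1)$; then $y\not\prec x$, and if $y\succ x$ the $f(y)\ge k+1$ elements of $\{v:v\prec y\}$ --- which include $x$ --- would all need $L$-values in $\{1,\ldots,k\}$, impossible, so $y\,||\,x$ and the transposition yields a valid linear extension. For invertibility, take $L'$ with $L'(x)=k+1$ and set $z:=(L')^{-1}(k)$; then $z\not\succ x$, and if $z\prec x$ the $g(z)\ge n-k+2$ elements of $\{v:v\succ z\}$ would all need $L'$-values in $\{k+1,\ldots,n\}$, again impossible, so $z\,||\,x$. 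This gives $\aNr(k)=\aNr(k+1)$; running the identical argument on the dual poset $P^{\ast}=(X,\succ)$ at position $n+1-k$ --- both condition~(c) and the hypothesis $\aNr(k)>0$ being invariant under this duality --- yields $\aNr(k)=\aNr(k-1)$, and $(\mathrm{b})$ follows.

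The hard step is $(\mathrm{a})\Rightarrow(\mathrm{c})$, which I expect to be the main (indeed the only real) obstacle. Arguing by contraposition, if~(c) fails then, after passing to the dual if necessary, there is $y_0\succ x$ with $f(y_0)\le k$, and the goal is the strict inequality $\aNr(k)^2>\aNr(k-1)\aNr(k+1)$. No elementary route seems available here: no combinatorial proof of even the non-strict Stanley inequality is known for general posets, and the local transposition bookkeeping above compares $\aNr(k)$ with $\aNr(k\pm1)$ only individually, not through the product $\aNr(k-1)\aNr(k+1)$. One therefore invokes one of the two known arguments. In~\cite{SvH}, $\aNr(\cdot)$ is realized up to normalization as a sequence of mixed volumes of Minkowski combinations of order/chain polytopes attached to $(P,x)$, Stanley's inequality becomes an instance of the Alexandrov--Fenchel inequality, and the failure of~(c) is shown to furnish a non-homothetic deformation direction forbidden by the Shenfeld--van Handel description of the equality cases of Alexandrov--Fenchel, forcing strictness. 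In~\cite{CP}, one instead builds a combinatorial atlas --- a recursively defined family of matrices with a one-negative-eigenvalue property from which the inequality follows by induction on $|X|$ --- after which the equality analysis reduces to tracking when the pullback maps of the atlas fail to preserve the hyperbolic cone, a degeneracy which occurs precisely when some $y_0\succ x$ has $f(y_0)\le k$ (or the dual condition holds). Either way, this step lies outside the lattice-path framework developed in the present paper, which is why we only cite it.

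Finally, I would point out that for posets of width two the entire equivalence --- including the fact that~(c) there specializes to the $k$-pentagon property --- is already contained in Theorem~\ref{t:q-Sta-equality}, giving a fully combinatorial and self-contained proof in that case.
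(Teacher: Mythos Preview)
Your proposal is correct and matches the paper's treatment: the paper does not give its own proof of this theorem but states it as a cited result from~\cite[Thm~15.3]{SvH}, noting only (in the remark after Theorem~\ref{t:equality-KS-dc}) that the implication (c)~$\Rightarrow$~(b) is the argument of~\cite[\S15.1]{SvH}. Your transposition argument for (c)~$\Rightarrow$~(b) is exactly that standard argument, and you correctly identify (a)~$\Rightarrow$~(c) as the deep step that must be imported from~\cite{SvH} or~\cite{CP} and lies outside the scope of this paper.
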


\smallskip

\begin{prop}\label{prop:pentagon}
For posets of width two, condition~{\rm (c)} in Theorem~\ref{t:Sta-equality-gen} is equivalent
to the $k$-pentagon property of~$x$, which is condition~{\rm (e)} in Theorem~\ref{t:q-Sta-equality}.
\end{prop}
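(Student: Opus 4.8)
The plan is to prove this by directly unwinding condition~(c) of Theorem~\ref{t:Sta-equality-gen} in the language of the two chains. (A quicker but less illuminating route: by Theorem~\ref{t:Sta-equality-gen} condition~(c) is equivalent to $\aNr(k)^2=\aNr(k-1)\aNr(k+1)$, which by Theorem~\ref{t:q-Sta-equality} is equivalent to the $k$-pentagon property of~$x$; this route also inherits the boundary-case hypotheses of both theorems for free, which is convenient.) For the direct argument I would first reduce, without loss of generality, to $x=\al_r\in\Cr_1$ (the case $x\in\Cr_2$ follows by interchanging the two chains). Write $a:=|\{j:\be_j\prec\al_r\}|$ and $b:=|\{j:\be_j\succ\al_r\}|$, so that $\be_1\prec\cdots\prec\be_a\prec\al_r\prec\be_{\bnb-b+1}\prec\cdots\prec\be_\bnb$ and the remaining $\be_j$ are incomparable to $\al_r$. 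The hypothesis $\aNr(k)>0$ is exactly $f(\al_r)<k\le n-g(\al_r)$, which in these terms reads $a\le k-r\le\bnb-b$; in particular $k-r+1>a$, so $\be_{k-r+1}\not\prec\al_r$ comes for free.

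The heart of the matter is the claim that, under $\aNr(k)>0$, condition~(c)(i) (that $f(y)>k$ for all $y\succ x$) is equivalent to the conjunction ``$\be_{k-r+1}\prec\al_{r+1}$ and $\al_r\,||\,\be_{k-r+1}$''. Since $f$ is strictly increasing along each chain, (c)(i) reduces to $f(\al_{r+1})>k$ together with $f(\be_{\bnb-b+1})>k$; and because the $\be_j$ below $\al_{r+1}$ form an initial segment, with $f(\al_{r+1})=r+|\{j:\be_j\prec\al_{r+1}\}|$, the first inequality says precisely $\be_{k-r+1}\prec\al_{r+1}$. The nontrivial point is that (c)(i) also forces the incomparability: if instead $\be_{k-r+1}\succ\al_r$, then $k-r+1\ge\bnb-b+1$, which with $k-r\le\bnb-b$ pins $\be_{k-r+1}=\be_{\bnb-b+1}$; combined with $\be_{k-r+1}\prec\al_{r+1}$ this forces exactly $\al_1,\dots,\al_r$ to lie below $\be_{\bnb-b+1}$, giving $f(\be_{\bnb-b+1})=(\bnb-b)+r=k$, a contradiction. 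The converse is a direct count: $\be_{k-r+1}\prec\al_{r+1}$ gives $f(\al_h)\ge f(\al_{r+1})\ge k+1$ for all $h\ge r+1$, and $\al_r\,||\,\be_{k-r+1}$ gives $\bnb-b\ge k-r+1$, whence $f(\be_j)\ge f(\be_{\bnb-b+1})\ge(\bnb-b)+r\ge k+1$ for every $\be_j\succ\al_r$.

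Applying this claim to the dual poset $P^{\ast}$ (all relations reversed, $k$ replaced by $n+1-k$), in which $\al_r$ is the $(\ana+1-r)$-th element of its chain and $g$ plays the role of $f$, yields the mirror statement: condition~(c)(ii) is equivalent to ``$\al_{r-1}\prec\be_{k-r}$ and $\al_r\,||\,\be_{k-r}$''. Conjoining the two equivalences, and using that $\be_{k-r}\prec\be_{k-r+1}$ holds automatically inside $\Cr_2$, reproduces exactly $\al_{r-1}\prec\be_{k-r}\prec\be_{k-r+1}\prec\al_{r+1}$ with $\al_r\,||\,\be_{k-r}$ and $\al_r\,||\,\be_{k-r+1}$, i.e.\ the $k$-pentagon property of~$x$. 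I expect the only real friction, beyond the incomparability extraction above, to be the bookkeeping of degenerate indices (when $r\in\{1,\ana\}$ or one of $\be_{k-r},\be_{k-r+1}$ leaves its range): in each such case the pentagon property is vacuously false, and one must check that (c) fails too — either directly, because the missing chain element makes the relevant $f$- or $g$-inequality unsatisfiable, or simply by the shortcut through Theorem~\ref{t:Sta-equality-gen}, which already builds in the hypothesis $k\in\{1,\dots,n-1\}$.
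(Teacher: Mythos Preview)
The paper does not actually prove this proposition: it says ``The proof is a straightforward case analysis and is left to the reader.  Of course, the proposition also follows by combining Theorem~\ref{t:q-Sta-equality} and Theorem~\ref{t:Sta-equality-gen}.''  Your shortcut is precisely this second route, and your direct argument is the case analysis the paper defers to the reader.  The core of your direct argument --- reducing (c)(i) to $f(\al_{r+1})>k$ and $f(\be_{\bnb-b+1})>k$, identifying the first with $\be_{k-r+1}\prec\al_{r+1}$, and extracting the incomparability $\al_r\,\|\,\be_{k-r+1}$ from the second via the pinning $\be_{k-r+1}=\be_{\bnb-b+1}$ --- is correct and is exactly the kind of computation the paper has in mind.

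One small imprecision: your assertion that the pentagon property is ``vacuously false'' whenever $r\in\{1,\ana\}$ is not right under the natural reading (the missing $\al_{r-1}$ or $\al_{r+1}$ makes the corresponding relation vacuously \emph{true}, not the whole property false), and condition~(c) can likewise hold at these boundary values.  This does not damage your argument, however, since your main claim and its dual go through verbatim at $r=1$ and $r=\ana$ once one reads the missing conditions as vacuous --- and in any case the shortcut handles all cases uniformly.
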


\smallskip

The proof is a straightforward case analysis and is left to the reader.  Of course, the
proposition also follows by combining  Theorem~\ref{t:q-Sta-equality}
and Theorem~\ref{t:Sta-equality-gen}.

\smallskip

\begin{prop}[{\rm \cite[Lemma~15.2]{SvH}}]\label{prop:Sta-zero}
Let \ts $P=(X,\prec)$ \ts be a poset with $n$ elements,
let \ts $x\in X$ \ts and \ts $1\le k \le n$.  Then \ts
$\aNr(k)>0$ \ts if and only if \. $f(x)\le k-1$ \. and \. $g(x)\le n-k$.
\end{prop}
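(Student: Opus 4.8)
The plan is to establish the sharper statement that the set of attainable values $\{L(x) : L \in \Ec(P)\}$ is exactly the integer interval $\{f(x)+1,\ldots,n-g(x)\}$; the proposition is then immediate, since the pair of inequalities $f(x)\le k-1$ and $g(x)\le n-k$ is precisely the condition $f(x)+1\le k\le n-g(x)$.

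The forward implication is a one-line counting argument: if $L\in\Ec(P)$ has $L(x)=k$, then every $v\prec x$ satisfies $L(v)\in\{1,\ldots,k-1\}$ and every $v\succ x$ satisfies $L(v)\in\{k+1,\ldots,n\}$, so $f(x)\le k-1$ and $g(x)\le n-k$.

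For the converse I would argue constructively. Write $A:=\{v:v\prec x\}$, $B:=\{v:v\succ x\}$, and $C:=X\setminus(A\cup B\cup\{x\})$ for the set of elements incomparable to $x$, so that $|A|=f(x)$, $|B|=g(x)$, and $|C|=n-1-f(x)-g(x)$. The structural input is two transitivity observations: no element of $\{x\}\cup B\cup C$ lies strictly below an element of $A$, and dually no element of $\{x\}\cup A\cup C$ lies strictly above an element of $B$ (for the first, $v\prec u\preceq x$ forces $v\prec x$, hence $v\in A$). Now fix any partition $C=C_1\sqcup C_2$ in which $C_1$ is a down-set of the induced subposet $P|_C$, and form the word obtained by listing a linear extension of $P|_{A\cup C_1}$, then $x$, then a linear extension of $P|_{B\cup C_2}$. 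One checks that this word is a linear extension of $P$: the only way it could fail is for some element of a later block to lie strictly below an element of an earlier block, and the checks needed (an element of $\{x\}\cup B\cup C_2$ strictly below an element of $A$, below $x$, or below an element of $C_1$) all go through -- the last case being exactly the one excluded by $C_1$ being down-closed in $P|_C$, the others by the transitivity observations. In this linear extension $x$ receives the label $|A|+|C_1|+1$.

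It therefore remains only to produce a down-set of $P|_C$ of size exactly $k-1-f(x)$, with label $|A|+|C_1|+1=k$: simply take any linear extension of $P|_C$ and keep its $k-1-f(x)$ smallest elements. This is legitimate precisely when $0\le k-1-f(x)\le|C|=n-1-f(x)-g(x)$, i.e.\ when $f(x)\le k-1$ and $g(x)\le n-k$, which is exactly the hypothesis. I do not anticipate a genuine obstacle; the only slightly delicate point is the verification that the three-block concatenation is order-preserving, which is a short case analysis on which block contains a putative violating pair, each case resolved by one of the two transitivity observations or by the down-set property of $C_1$.
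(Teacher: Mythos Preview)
Your proof is correct. The paper does not actually prove this proposition in the body of the text: it is quoted from \cite[Lemma~15.2]{SvH}, and the only comment the paper makes is in~$\S$\ref{ss:finrem-promo}, where it remarks that the promotion-style maps $\Phi,\Psi$ from the proof of Theorem~\ref{t:F-positive} can also be used to prove Proposition~\ref{prop:Sta-zero}. That approach would start from an arbitrary linear extension and repeatedly shift $x$ by swapping it past an incomparable neighbor until $L(x)$ reaches the target value~$k$. Your argument is genuinely different: rather than iterating a local move, you build the desired linear extension in one shot by splitting the incomparable elements $C$ into a down-set $C_1$ of the right size and its complement, then concatenating linear extensions of $A\cup C_1$, $\{x\}$, and $B\cup C_2$. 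The promotion approach has the advantage of being uniform with the rest of the paper and of generalizing cleanly to the multi-point setting hinted at in~$\S$\ref{ss:finrem-mixed}; your block construction is more self-contained and makes the interval structure of $\{L(x):L\in\Ec(P)\}$ transparent without any iteration.
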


\smallskip

\begin{cor}\label{cor:Sta-positivity}
Let $P=(X,\prec)$ be a poset on \ts $|X|=n$ \ts elements, and let \ts $x\in X$.
Then, deciding whether \.
$\aNr(k)^2\ts =\ts \aNr(k-1) \ts \aNr(k+1)$ \. can be done in \ts {\rm poly$(n)$} \ts time.
\end{cor}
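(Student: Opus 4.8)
The plan is to reduce the equality test to checking polynomially many numerical comparisons among the quantities $f(u),g(u)$ for $u\in X$, thereby bypassing the computation of the numbers $\aNr(j)$ themselves, which is $\SP$-hard in general (for instance $\aNr(1)$, resp.\ $\aNr(n)$, counts the linear extensions of $P\setminus\{x\}$ when $x$ is minimal, resp.\ maximal, and vanishes otherwise). Throughout I assume the poset is presented by its comparability (or covering) relation and that $1\le k\le n$, the cases $k\notin\{1,\dots,n\}$ being trivial since then $\aNr(k)=0$.

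First I would compute the comparability relation of $P$ (the transitive closure of its Hasse diagram) and then the values $f(u)$ and $g(u)$ for all $u\in X$; this clearly takes $\mathrm{poly}(n)$ time. By Proposition~\ref{prop:Sta-zero} this already lets us decide in polynomial time, for every integer $j$, whether $\aNr(j)>0$, namely whether $f(x)\le j-1$ and $g(x)\le n-j$ (with $\aNr(j)=0$ automatically for $j\notin\{1,\dots,n\}$).

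Next I would split into cases. If $\aNr(k)=0$, then $\aNr(k)^2=\aNr(k-1)\,\aNr(k+1)$ holds if and only if $\aNr(k-1)=0$ or $\aNr(k+1)=0$, which was already decided in the previous step. If $\aNr(k)>0$ but $k=n$, then $\aNr(k+1)=0<\aNr(k)^2$, so the equality fails. Finally, if $\aNr(k)>0$ and $k\in\{1,\dots,n-1\}$, the hypotheses of Theorem~\ref{t:Sta-equality-gen} are met, so the equality holds if and only if condition~(c) of that theorem holds: $f(y)>k$ for every $y\succ x$, and $g(y)>n-k+1$ for every $y\prec x$. Since the comparability relation and all values $f(y),g(y)$ are already in hand, verifying~(c) amounts to a $\mathrm{poly}(n)$ number of integer comparisons. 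Running this procedure for each of the $n$ values of $k$ also shows, as a bonus, that one can decide in polynomial time whether equality holds for \emph{some}~$k$.

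The content here is not in the algorithm, which is elementary bookkeeping, but in the structural input --- Theorem~\ref{t:Sta-equality-gen} together with Proposition~\ref{prop:Sta-zero} --- that collapses an a priori $\SP$-hard-looking equality into a purely order-theoretic criterion. The only point that needs a little care, and the closest thing to an obstacle, is to invoke Theorem~\ref{t:Sta-equality-gen} strictly within its hypotheses: one must first dispose of the degenerate case $\aNr(k)=0$ and of the boundary value $k=n$ by the direct positivity arguments above, rather than appealing to condition~(c) in those situations.
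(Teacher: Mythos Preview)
Your proof is correct and follows essentially the same approach as the paper: test $\aNr(k)=0$ via Proposition~\ref{prop:Sta-zero}, and in the nonvanishing case invoke condition~(c) of Theorem~\ref{t:Sta-equality-gen}. The only minor difference is that when $\aNr(k)=0$ the paper simply observes that equality is automatic (since the Stanley inequality forces $\aNr(k-1)\,\aNr(k+1)=0$), whereas you verify this by separately testing $\aNr(k\pm 1)=0$; both are fine, and your handling of the boundary case $k=n$ is a bit more explicit than the paper's.
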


\smallskip

Here and everywhere below we assume that posets are presented in such a way that
testing comparisons \. ``$x \prec y$'' \. has $O(1)$ cost, so e.g.\ the function \ts $f(x)$ \ts
can be computed in \ts $O(n)$ \ts time.

\smallskip

\begin{proof}[Proof of Corollary~\ref{cor:Sta-positivity}]
Clearly, we have the equality for all \ts $\aN(k)=0$.  By Proposition~\ref{prop:Sta-zero},
this condition can be tested in polynomial time. Similarly,
condition~(c) in Theorem~\ref{t:Sta-equality-gen} implies that equality in the
Stanley inequality can be tested in polynomial time in the remaining cases.
\end{proof}

\medskip

\subsection{Vanishing conditions in the Kahn--Saks inequality}\label{ss:posets-KS-van}
The following result is a natural generalization of Proposition~\ref{prop:Sta-zero}.

\smallskip

\begin{thm}\label{t:F-positive}
Let \ts $P=(X,\prec)$ \ts and let \ts $x\prec y$, where \ts $x,y \in X$.
Denote
$$
h(x,y) \ := \ \big|\{u\in X\.{}:\.{}x\prec u \prec y\}\big|\ts.
$$
Then \ts $\aFr(k) >0$ \ts if and only if
$$h(x,y)  \, < \, k \, < \, n \ts - \ts f(x) \ts - \ts g(y).
$$

\end{thm}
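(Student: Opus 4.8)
The plan is to prove the two implications separately: the forward direction is a one-line counting argument, and the reverse direction is an explicit construction of a linear extension from a suitably chosen nested pair of order ideals.

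For \emph{necessity}, suppose $L\in\Ec(P)$ has $L(y)-L(x)=k$. Then every $v\prec x$ satisfies $L(v)<L(x)$, so $L(x)\ge f(x)+1$; every $w\succ y$ satisfies $L(w)>L(y)$, so $L(y)\le n-g(y)$; and every $u$ with $x\prec u\prec y$ satisfies $L(x)<L(u)<L(y)$, so the open integer interval $\bigl(L(x),L(y)\bigr)$ contains at least $h(x,y)$ values, giving $k-1\ge h(x,y)$. Combining, $h(x,y)<k\le\bigl(n-g(y)\bigr)-\bigl(f(x)+1\bigr)<n-f(x)-g(y)$, as claimed.

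For \emph{sufficiency}, assume $h<k<n-f(x)-g(y)$ with $h:=h(x,y)$. First I would partition $X$ into the lower set $D:=\{v:v\preceq x\}$, the upper set $E:=\{v:v\succeq y\}$, the interval $M:=\{u:x\prec u\prec y\}$, and the remainder $R:=X\setminus(D\cup E\cup M)$; these are pairwise disjoint (here $x\prec y$ rules out the offending cycles), so $|R|=n-f(x)-g(y)-h-2\ge 0$. Next I would single out $R_y:=\{v\in R:v\prec y\}$ and record the one structural fact I need: every $v\in R_y$ is incomparable to $x$, and \emph{no element of $M$ lies below any element of $R_y$} (otherwise $x\prec m\prec v\prec y$ would force $v\in M$). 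From this it follows that for any order ideal $S$ of the induced subposet $(R_y,\prec)$, the set $D\cup S$ is an order ideal of $P$ in which $x$ is maximal, since every element below an element of $R_y$ already lies in $D\cup R_y$. I would then reduce the problem to producing order ideals $I\subseteq J$ of $P$ with $x$ maximal in $I$, with $J\cup\{y\}$ again an order ideal, and with $|J|-|I|=k-1$: given such a pair, concatenating a linear extension of $I\setminus\{x\}$, then $x$, then a linear extension of $J\setminus I$, then $y$, then a linear extension of $X\setminus(J\cup\{y\})$ produces an $L\in\Ec(P)$ with $L(x)=|I|$, $L(y)=|J|+1$, hence $L(y)-L(x)=k$. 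To build the pair, set $s:=\max\!\bigl(0,\,h+|R_y|+1-k\bigr)$, note $0\le s\le|R_y|$ because $k\ge h+1$, take an order ideal $S$ of $(R_y,\prec)$ with $|S|=s$, and put $I:=D\cup S$. The admissible choices of $J$ are exactly the order ideals with $J_0\subseteq J\subseteq J_1$ where $J_0:=\{v:v\prec y\}$ and $J_1:=X\setminus E$, and one has $I\subseteq J_0$ automatically; a short computation with $|J_0|=f(x)+1+h+|R_y|$, $|J_1|=n-g(y)-1$, the value of $s$, and the hypotheses $k\ge h+1$, $k\le n-f(x)-g(y)-1$, $|R_y|\le|R|$ shows that the required size $|I|+k-1=f(x)+s+k$ lies in $\bigl[|J_0|,|J_1|\bigr]$, so a valid $J$ exists. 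This yields a linear extension with $L(y)-L(x)=k$, i.e.\ $\aFr(k)>0$.

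The routine supporting facts are: that $\emptyset\subseteq I\setminus\{x\}\subseteq I\subseteq J\subseteq J\cup\{y\}\subseteq X$ is a chain of order ideals, so concatenating linear extensions of successive set-differences is a genuine linear extension of $P$; and that between any two nested order ideals of $P$ there is an order ideal of every intermediate cardinality (adjoin to the smaller one an order ideal of the induced poset on their difference). The single nontrivial structural input is the observation that $M$ never lies below $R_y$, which is what makes $D\cup S$ an order ideal with $x$ still maximal and hence lets me ``slide'' part of $R_y$ to the left of $x$; I expect the main (purely computational) obstacle to be the bookkeeping that verifies this one choice of $s$ simultaneously keeps $S$ a legal order ideal of $R_y$ and keeps $f(x)+s+k$ inside $\bigl[|J_0|,|J_1|\bigr]$, which comes down to the two hypotheses on $k$ plus the trivial bound $|R_y|\le|R|=n-f(x)-g(y)-h-2$.
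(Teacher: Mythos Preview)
Your proof is correct, but it takes a genuinely different route from the paper. The paper's argument for the ``if'' direction is dynamic: it first invokes Proposition~\ref{prop:Sta-zero} to find a linear extension $L$ with $L(x)$ pinned at a carefully chosen value $c=\min\{n-g(x),\,n-k-g(y)\}$, and then applies promotion-like maps $\Phi,\Psi$ (which shuffle a single incomparable element past $x$ or~$y$) to push $L(y)-L(x)$ up or down by one until it reaches~$k$. Your argument, by contrast, is static: you build the desired linear extension in one shot by exhibiting a nested pair of order ideals $I\subseteq J$ of the right sizes, using the partition $X=D\cup M\cup E\cup R$ and the key structural observation that nothing in $M$ lies below anything in $R_y$. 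The paper's approach has the advantage that the maps $\Phi,\Psi$ (and their relatives $\Omega,\Theta$) are reused in the proof of Theorem~\ref{t:equality-KS-dc}, so the machinery pays dividends later; your approach is more self-contained and avoids appealing to Proposition~\ref{prop:Sta-zero}, at the cost of the bookkeeping around the choice of~$s$, which you have carried out correctly.
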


\smallskip

\begin{proof}
For the \ts \emph{``only if''} \ts direction, let \ts $L\in \Ec(P)$ \ts be a linear extension such that
\ts $L(y)-L(x)=k$.  By definition,  we have \ts $f(x) \leq L(x)-1$ \ts and
\ts $g(y)\leq n-L(y)$, which implies
\[ f(x) + g(y) \, \leq \, L(x)-1 + n-L(y) \,  = \, n-k-1.   \]
Furthermore, condition \ts $L(y)-L(x)=k$ \ts implies
that \ts $h(x,y) \leq k-1$, as desired.

\medskip

For the \ts \emph{``if''} \ts direction, let \ts $c:=\min\{n-g(x), n-k-g(y)\}$.
Note that \ts $g(x) \leq  n-c$ \ts and \ts $g(y) \leq  n-c-k$.
%
We also have \ts  $f(x)  \leq  n-g(x) -1$ \ts  by definition of upper and lower ideals,
and \ts $f(x) \leq n-k-g(y)-1$ \ts by assumption.
Combining these two inequalities, we get \ts $f(x) \leq c-1$.

Since \ts $f(x) \leq c-1$ \ts and \ts $g(x) \leq n-c$, by Proposition~\ref{prop:Sta-zero},
there is a linear extension \ts $L\in \Ec(P)$ \ts such that \ts $L(x)=c$.
We are done if \ts $L(y)=c+k$, so suppose that \ts $L(y) \neq c+k$.
We split the proof into two cases.
	
\smallskip

\nin
$(1)$ \. Suppose  that \ts $L(y) < c+k$.
Since  \ts $g(y)  \leq \, n-c-k$,
there exists \ts $w \in X$ \ts such that \ts $w \. || \. y$ \ts and \ts $L(w) > L(y)$.
Let $w$ be such an element for which $L(w)$ is minimal, let \ts $a:=L(y)$ \ts and \ts $b:=L(w)$.
The minimality assumption implies that every  \. $u\in \{L^{-1}(a),\ldots, L^{-1}(b-1) \}$  \ts satisfies \ts $u \succ y$,
which gives \ts $u \. ||\. w$.

Define a new linear extension \ts $L'\in \Ec(P)$, obtained from $L$ by setting \ts
$$L'(w):= L(y), \quad L'(y) := L(y)+1, \quad L'(u) = L(u)+1 \ \  \text{for all} \ \ u \in X \ \ \text{s.t.} \ \ a\le L(u)\le b-1,
$$
and setting \ts $L'(v):=L(v)$ \ts for all other elements \ts $v\in X$. Note that \ts $L'(x)=L(x)$ \ts by definition.

Denote by \. $\Phi: L \to L'$ \. the resulting map on~$\Ec(P)$. From above, $\Phi$ \ts increases the difference
\ts $L(y)-L(x)$ \ts by one when defined.  Iterate~$\Phi$ until we obtain a
linear extension \ts $L^\diamond$ \ts that satisfies \. $L^\diamond(y)-L^\diamond(x)=(c+k) - c= k$, as desired.

\smallskip

\nin
$(2)$ \. Suppose that $L(y)>c+k$.  This implies that \ts $L(y)-L(x)>k$. Proceed analogously to~$(1)$.
Since \ts $h(x,y) < k$, there exists \ts $w \in X$ \ts such that \ts $L(x) < L(w) < L(y)$,
and either \ts $w\.{}||\.{}x$ \ts or \ts  $w\.{}||\.{}y$.  Assume that \ts
$w \. || \. x$, and let $w$ be such an element for which $L(w)$ is minimal.
	 Let \ts $a:=L(x)$ \ts and \ts $b:=L(w)$.
	This minimality assumption implies that every  \. $u\in \{L^{-1}(a),\ldots, L^{-1}(b-1) \}$  \ts satisfies \ts $u \succ x$,
	which gives \ts $u \. ||\. w$.

Define \ts $L'\in \Ec(P)$, obtained from $L$ by setting
$$L'(w):=L(x), \quad L'(x):=L(x)+1, \quad L'(u) = L(u)+1 \ \  \text{for all} \ \ u \in X \ \ \text{s.t.} \ \ a\le L(u)\le b-1,
$$
and setting \ts $L'(v):=L(v)$ \ts for all other elements \ts $v\in X$. Note that \ts $L'(y)=L(y)$ \ts by definition.

Denote by \. $\Psi: L \to L'$ \. the resulting map on~$\Ec(P)$. From above, $\Psi$ \ts decreases the difference
\ts $L(y)-L(x)$ \ts by one when defined.  Iterate~$\Psi$ until we obtain a
linear extension \ts $L^\circ$ \ts that satisfies \. $L^\circ(y)-L^\circ(x)= k$, as desired.

The case \ts $w\.{}||\.{}y$ \ts is completely analogous. This completes the proof of case~$(2)$, and
the  ``if'' \ts direction.
\end{proof}

\smallskip

\begin{cor}\label{cor:KS-positivity}
Let $P=(X,\prec)$ be a poset on \ts $|X|=n$ \ts elements, let \. $k>0$,
and let \ts $x,y\in X$ \ts be distinct elements. Then deciding whether \.
$\aFr(k)>0$ \. can be done in \ts {\rm poly$(n)$} \ts time.
\end{cor}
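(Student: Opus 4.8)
The plan is to reduce everything to Theorem~\ref{t:F-positive}, which already resolves the case $x\prec y$. First I would dispose of the trivial situation $y\prec x$: here $L(y)<L(x)$ for every $L\in\Ec(P)$, so $\aFr(k)=0$ for all $k>0$ and the answer is immediate. In the two remaining cases, $x\prec y$ and $x\.||\.y$, I would pass to the poset $P'=(X,\prec')$ obtained by adjoining the relation $x\prec y$ to $P$ and taking the transitive closure. This is a genuine poset precisely because we have excluded $y\prec x$ (which is the only way $P\cup\{x\prec y\}$ could contain a cycle), and it admits the explicit description
\[
u\prec' v \quad\Longleftrightarrow\quad u\prec v,\ \text{ or }\ \bigl(u\preccurlyeq x \ \text{ and }\ y\preccurlyeq v\bigr),
\]
so a comparison query in $P'$ costs $O(1)$ given $O(1)$-cost comparison and equality tests in $P$. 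Note $P'=P$ when $x\prec y$, so this uniformly covers both cases.

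The key point is that, since $k>0$, any $L\in\Ec(P)$ with $L(y)-L(x)=k$ satisfies $L(x)<L(y)$ and hence respects $x\prec y$, while conversely every linear extension of $P'$ is a linear extension of $P$ with $L(x)<L(y)$. Therefore
\[
\{L\in\Ec(P)\ :\ L(y)-L(x)=k\}\ =\ \{L\in\Ec(P')\ :\ L(y)-L(x)=k\}\qquad\text{for all } k>0,
\]
so $\aFr(k)$ equals the analogous quantity $\aFr^{P'}(k)$ for $P'$. Since $x\prec' y$, Theorem~\ref{t:F-positive} applies to $P'$ and yields: $\aFr(k)>0$ if and only if $h_{P'}(x,y)<k<n-f_{P'}(x)-g_{P'}(y)$. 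Using the description of $\prec'$ above, the three quantities $h_{P'}(x,y)$, $f_{P'}(x)$, $g_{P'}(y)$ are each computable in $O(n)$ time, so the whole procedure runs in polynomial time. Combined with the trivial case $y\prec x$, this proves the corollary; in fact one gets the uniform criterion that $\aFr(k)>0$ iff $y\not\prec x$ and $h_{P'}(x,y)<k<n-f_{P'}(x)-g_{P'}(y)$.

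There is no serious obstacle here: the only step requiring any care is the incomparable case, and it is handled by the standard observation that counting linear extensions with $L(x)<L(y)$ is the same as counting linear extensions of the poset with the relation $x\prec y$ imposed. Everything else is bookkeeping, together with the remark that forming $P'$ (or merely answering comparison queries in it) is clearly polynomial.
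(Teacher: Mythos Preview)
Your proposal is correct and is essentially what the paper intends: the corollary is stated immediately after Theorem~\ref{t:F-positive} with no separate proof, so the paper implicitly regards it as a direct consequence of that theorem. You have made explicit the two points the paper leaves to the reader, namely the trivial case $y\prec x$ and the reduction of the incomparable case to $x\prec y$ by adjoining the relation $x\prec y$; both are standard and handled cleanly.

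One minor remark: in~\S\ref{ss:finrem-mixed} the paper also notes an alternative route to this corollary via mixed volumes (Theorem~\ref{t:mixed-zero}), writing $\aFr(k)=\sum_{a}\aNr(a,a+k)$ and testing each summand for vanishing. Your argument is the more elementary one and is the approach the placement of the corollary suggests.
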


\medskip

\subsection{Complete equality conditions in the Kahn--Saks inequality}\label{ss:posets-KS}
As we discuss in the introduction, the equivalence (a)~$\Rightarrow$~(b)
in Theorem~\ref{t:q-KS-equality} does not extend to general posets.
However, the condition~(b) which states \. $\aF(k) \ts = \ts \aF(k+1) \ts = \ts \aF(k-1)$ \.
is of independent interest and perhaps can be completely characterized.
Below we give some partial results in this direction.

\smallskip

First, observe that the equality condition~(c) in
Theorem~\ref{t:Sta-equality-gen} is remarkably clean
when compared to our condition~(e) in Theorem~\ref{t:q-KS-equality}.
This suggests the following natural generalization.

\smallskip

Let \ts $P=(X,\prec)$ \ts and let \ts $x,y \in X$.
We write \. $h(x,y):= |\{u\in X\.{}:\.{}x\prec u \prec y\}|$.
We say that  \ts $(x,y)$ \ts satisfies \ts \defn{$k$-midway property},
if

\smallskip

\qquad $\circ$ \ $f(z) +g(y)\ts > \ts  n-k$ \. for every  $z \in X$ such that \ts $x \prec z$ \ts and \ts $z \not \succ y$,

\qquad $\circ$ \ $h(z,y) \ts > \ts  k$ \. for every \ts $z \prec x$, \. and \. $f(y) \ts > \ts k$.

\smallskip

\nin
Note that the last condition \. $f(y) >k$ \. is equivalent to \. $h(z,y) >k$ \.
for \. $z= \wh 0$,  i.e.\ can be dropped when the element~$\wh 0$ is added to~$P$.

\smallskip

\nin
Similarly, we say that \ts $(x,y)$ \ts
satisfies \ts \defn{dual $k$-midway property}, if:

\smallskip

\qquad $\circ$ \ $g(z)+f(x)\ts > \ts  n-k$ \. for every  $z \in X$ such that \ts $z \prec y$ \ts and \ts $z \not \precc x$,

\qquad $\circ$ \ $h(x,z) \ts > \ts  k$ \. for every \ts $z \succ y$, \. and \. $g(z) >k$.

\medskip

By definition, pair \ts $(x,y)$ \ts satisfies the $k$-midway property in the poset $P=(X,\prec)$,
if and only if pair $(y,x)$ satisfies the dual $k$-midway property in the \defn{dual poset}
\ts $P^\ast=(X,\prec^\ast)$,
obtained by reversing the partial order:  \ts $u \prec v$ \. $\Leftrightarrow$ \. $v \prec^\ast u$,
for all \ts $u,v\in X$.

\medskip

\begin{conj}[{{\it {\color{red} Complete equality condition for the Kahn--Saks inequality}}}]\label{conj:KS-equality}
Let \ts $x,y\in X$ \ts be distinct elements of a finite poset \ts $P=(X,\prec)$.
Denote by \ts $\aFr(k)$ \ts the number of linear extensions \ts $L\in \Ec(P)$,
such that \ts $L(y)-L(x)=k$.  Suppose that \ts $k \in \{2,\ldots,n-2\}$ \ts and \ts $\aFr(k)>0$. Then the following are equivalent:
\begin{enumerate}
		[{label=\textnormal{({\alph*})},
		ref=\textnormal{\alph*}}]
\item  \ $\aFr(k) \. = \.\aFr(k+1) \. = \. \aFr(k-1)$,
\item \ there is an element \ts $z\in \{x,y\}$, such that
for every \ts $L\in \Ec(P)$ \ts for which $L(y)-L(x)=k$, \\
there are elements \ts $u,v \in X$ \ts which satisfy \. $u\. || \.z$, \. $v\. || \. z$, and \.
$L(u)+1=L(z) = L(v)-1$,
\item \ the pair \ts $(x,y)$ \ts satisfies either \ts the $k$-midway \ts or \ts the dual $k$-midway property.
\end{enumerate}
\end{conj}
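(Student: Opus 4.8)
The plan is to isolate the one genuinely hard implication. Among the three conditions, \eqref{itemequality combinatorial}-type statement~(b) and the ideal-size statement~(c) are both purely combinatorial and should be equivalent by a direct translation between cardinalities of order ideals and positions of elements in linear extensions; the implication (b)$\Rightarrow$(a) is an easy consequence of the Kahn--Saks inequality. So the whole statement reduces to proving (a)$\Rightarrow$(b), which carries the real content.

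The implication (b)$\Rightarrow$(a) extends the argument of~\S\ref{ss:q-KS-setup}. Assume that for every \ts $L\in\Ec(P)$ \ts with \ts $L(y)-L(x)=k$ \ts the element \ts $z\in\{x,y\}$ \ts of~(b) exists, bracketed by incomparable \ts $u,v$ \ts in positions \ts $L(z)-1$ \ts and \ts $L(z)+1$. Transposing \ts $z$ \ts with its lower neighbour gives an injection from \ts $\{L:L(y)-L(x)=k\}$ \ts into \ts $\{L:L(y)-L(x)=k-1\}$ \ts if \ts $z=y$ \ts and into \ts $\{L:L(y)-L(x)=k+1\}$ \ts if \ts $z=x$, while transposing with the upper neighbour gives an injection into the level shifted the other way. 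Hence \ts $\aF(k)\le\aF(k-1)$ \ts and \ts $\aF(k)\le\aF(k+1)$, and since \ts $\aF(k)^2\ge\aF(k-1)\aF(k+1)$ \ts by Theorem~\ref{t:KS}, all three numbers coincide; this is the analogue for general posets of the step carried out in~\S\ref{ss:q-KS-setup} (cf.\ Proposition~\ref{prop:equality-KS-c->b}).

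For (b)$\Leftrightarrow$(c) I would argue by double counting. For (c)$\Rightarrow$(b), if \ts $(x,y)$ \ts satisfies the $k$-midway property take \ts $z=y$: in any \ts $L$ \ts with \ts $L(y)-L(x)=k$, the position \ts $L(y)+1$ \ts cannot hold an element \ts $w\succ y$, since such a \ts $w$ \ts would force a large lower ideal while \ts $y$ \ts has a large upper ideal, contradicting \ts $f(z)+g(y)>n-k$; similarly \ts $h(z,y)>k$ \ts and \ts $f(y)>k$ \ts forbid an element \ts $\prec y$ \ts in position \ts $L(y)-1$. The dual $k$-midway property yields \ts $z=x$ \ts after applying the order-reversing involution \ts $P\leftrightarrow P^\ast$, under which \ts $\aF(k)$, condition~(b) and the two midway properties transform into one another. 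For (b)$\Rightarrow$(c) I would argue contrapositively: whenever a midway inequality fails, one builds an explicit linear extension with \ts $L(y)-L(x)=k$ \ts that saturates the offending ideal and thereby violates the slot condition simultaneously for \ts $z=x$ \ts and \ts $z=y$. These are intricate but routine counting arguments.

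The hard part is (a)$\Rightarrow$(b): proving it in full is exactly what would turn the statement into a theorem, and I expect this to be the main obstacle. There are two routes. The \emph{geometric route} realizes \ts $\aF(k)$ \ts as a mixed volume of slices of the order polytope of~$P$, as in the original proof of the Kahn--Saks inequality, so that (a) becomes an equality case of the Alexandrov--Fenchel inequality; one then feeds this into the Shenfeld--van Handel classification of Alexandrov--Fenchel extremals and translates the resulting polytope degeneracy back into the slot condition. The difficulty is that this classification is recursive and opaque, and pinning down which of \ts $x,y$ \ts is the element \ts $z$ \ts in a given extremal configuration requires tracking exactly which faces of the order-polytope slices collapse — the same difficulty that made~\cite{SvH} hard in the Stanley case, here compounded by two interacting marked elements. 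The \emph{combinatorial route} mimics~\S\ref{s:q-KS-equality} and~\cite{CP}: one rebuilds the composite injection underlying the Kahn--Saks inequality out of promotion-like local moves (see~\S\ref{ss:finrem-promo}) and, from the hypothesis that this composite map is forced to be a bijection, reads off that \ts $x$ \ts or \ts $y$ \ts must be bracketed by incomparables in every relevant \ts $L$. The obstacle there is the absence, for a general poset, of the lattice-path model that organized the width-two argument, so the component injections and their equality cases must be recombined directly at the poset level, with the interplay of \ts $x$ \ts and \ts $y$ \ts having no analogue in the one-element analysis of~\cite{CP}. A sensible first target on either route is to establish (a)$\Rightarrow$(b) when \ts $x\prec y$ \ts with \ts $h(x,y)=0$, or when one of \ts $x,y$ \ts is a minimal or maximal element of~$P$, where the promotion maps are simplest, and then to bootstrap to the general case.
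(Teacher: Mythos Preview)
Your overall diagnosis is right and matches the paper: the statement is a conjecture, the paper proves only (b)$\Rightarrow$(a) (Proposition~\ref{prop:equality-KS-c->b}) and (b)$\Leftrightarrow$(c) (Theorem~\ref{t:equality-KS-dc}), and explicitly leaves (a)$\Rightarrow$(c) open. Your sketch of (b)$\Rightarrow$(a) is the same as the paper's, and your recognition that (a)$\Rightarrow$(b) is the genuine obstruction is exactly the paper's position.

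There is, however, a concrete error in your (c)$\Rightarrow$(b) sketch: you have the assignment of $z$ reversed. The $k$-midway property forces $z=x$, not $z=y$. Its two clauses are tailored to the neighbours of~$x$: the condition $h(w,y)>k$ for $w\prec x$ rules out a comparable element at position $L(x)-1$, and the condition $f(w)+g(y)>n-k$ for $x\prec w$, $w\not\succ y$ rules out a comparable element at position $L(x)+1$ (since then $f(w)\le L(x)$ and $g(y)\le n-L(y)$ would give $f(w)+g(y)\le n-k$). Nothing in the $k$-midway property constrains the neighbours of~$y$; in particular your claim that it forbids an element $\succ y$ at position $L(y)+1$ has no support in the hypotheses. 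Symmetrically, the dual $k$-midway property gives $z=y$, not $z=x$. Once you swap these, your argument goes through and agrees with the paper's.

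For (b)$\Rightarrow$(c) your contrapositive plan is the right shape, but the paper carries it out constructively rather than by contradiction: starting from any $L$ with $L(y)-L(x)=k$, it iterates promotion-type maps $\Omega,\Theta$ (cf.~\S\ref{ss:finrem-promo}) to push $L(y)$ to its extremal value $n-g(y)$ and then $L(w)$ to $f(w)+1$, all while staying inside $\{L:L(y)-L(x)=k\}$ by using the slot hypothesis~(b) at each step to swap $x$ with an incomparable neighbour. Reading off the positions in the final extension yields the midway inequalities directly. Your ``build an explicit linear extension that saturates the offending ideal'' is this same idea, but the paper's promotion machinery is what makes the construction go through, and it is not quite routine: one has to check that the auxiliary moves preserve $L(y)-L(x)=k$, which is precisely where hypothesis~(b) is invoked.
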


\medskip

Below we prove three implications, which reduce the conjecture to the implications \. (a)~$\Rightarrow$~(c).


\medskip

\begin{prop}\label{prop:equality-KS-c->b}
In the notation of Conjecture~\ref{conj:KS-equality}, we have {\rm \. (b)~$\Rightarrow$~(a).}
\end{prop}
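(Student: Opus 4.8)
The plan is to use condition~(b) to exhibit, for every linear extension $L$ with $L(y)-L(x)=k$, two ``elementary transpositions'' --- one that raises the difference $L(y)-L(x)$ by exactly one and one that lowers it by one --- and to check that the resulting assignments are injective. Together with the Kahn--Saks inequality (Theorem~\ref{t:KS}) this will pin all three of $\aFr(k-1),\aFr(k),\aFr(k+1)$ to a single common value, which is exactly~(a).

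First I would fix the distinguished element $z\in\{x,y\}$ provided by~(b). Since $k\ge 2$, for each $L\in\Ec(P)$ with $L(y)-L(x)=k$ the elements $u:=L^{-1}(L(z)-1)$ and $v:=L^{-1}(L(z)+1)$ are well defined (they are forced by $L$ once $z$ is fixed, because $L$ is a bijection), and a short case check shows neither equals the other distinguished element among $\{x,y\}$ --- this is precisely where the hypothesis $k\ge 2$ enters. Condition~(b) asserts $u\,||\,z$ and $v\,||\,z$; since $u$ and $z$ (respectively $v$ and $z$) occupy consecutive positions in~$L$, switching their $L$-values produces linear extensions $L'$ and $L''$ of~$P$ (transposing two incomparable elements at consecutive positions always yields a linear extension).

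Next I would record the effect on $L(y)-L(x)$. If $z=x$, the switch $z\leftrightarrow u$ decreases $L(x)$ by one and fixes $L(y)$, so $L'(y)-L'(x)=k+1$, while $z\leftrightarrow v$ gives $L''(y)-L''(x)=k-1$; if $z=y$ the two outcomes are interchanged. In either case we obtain maps
\[
\{L\in\Ec(P):L(y)-L(x)=k\}\ \longrightarrow\ \{L\in\Ec(P):L(y)-L(x)=k+1\}
\]
and, analogously, one into the level $k-1$. Each such map is injective: from its output one reads off the pair of elements that were transposed (namely $z$ together with the element now occupying the adjacent position on the side from which $z$ moved) and undoes the switch to recover the original~$L$, so the map has a left inverse on its image. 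Hence $\aFr(k)\le\aFr(k+1)$ and $\aFr(k)\le\aFr(k-1)$.

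Finally I would invoke Theorem~\ref{t:KS}, giving $\aFr(k)^2\ge\aFr(k-1)\ts\aFr(k+1)$. Since $\aFr(k)>0$, the two inequalities above force $\aFr(k-1),\aFr(k+1)\ge\aFr(k)>0$, whence $\aFr(k-1)\ts\aFr(k+1)\ge\aFr(k)\ts\aFr(k+1)\ge\aFr(k)^2\ge\aFr(k-1)\ts\aFr(k+1)$; so every inequality is an equality, and cancelling the positive factors yields $\aFr(k-1)=\aFr(k)=\aFr(k+1)$, which is~(a). The only delicate point is the bookkeeping in the boundary situations where a transposition could inadvertently involve the other distinguished element, and that is exactly the role of $k\ge 2$; everything else is routine. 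The case $z=y$ is parallel to $z=x$ and may alternatively be deduced from it by passing to the dual poset $P^\ast$, so it suffices to treat one case in full detail.
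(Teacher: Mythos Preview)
Your proof is correct and follows essentially the same approach as the paper's: build injections $\cF(k)\to\cF(k\pm1)$ by transposing $z$ with the adjacent incomparable element guaranteed by~(b), then combine the resulting inequalities $\aFr(k)\le\aFr(k\pm1)$ with the Kahn--Saks inequality to force equality throughout. You add a bit more detail than the paper does --- the explicit check (via $k\ge2$) that the transposition never involves the other distinguished element of $\{x,y\}$, and the explicit left inverse for injectivity --- but these are exactly the points the paper leaves implicit.
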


\begin{proof}
What follows is a variation on the argument in~$\S$\ref{ss:q-KS-setup}.
Without loss of generality, assume that \ts $z=x$.  Denote \ts $\cF(i) :=\{L\in \Ec(P)~:~L(y)-L(x)=i\}$,
so that \ts $\aF(i) = |\cF(i)|$.
Condition~(b) implies that there is an injection \. $\cF(k) \to \cF(k+1)$ \. given by relabeling \ts $x \leftrightarrow u$,
so that \ts $L(y)-L(x)=k+1$ \ts  and \ts $L(u)=L(x)+1$. Thus, \ts $\aF(k) \le \aF(k+1)$.  Similarly, we obtain
\ts $\aF(k) \le \aF(k-1)$ \ts  by relabeling \ts $x \leftrightarrow v$.  However, by the Kahn--Saks
inequality (Theorem~\ref{t:KS}), we have \. $\aF(k)^2 \ts \ge \ts \aF(k-1)\. \aF(k+1)$, implying
that all inequalities are in fact  equalities.
\end{proof}

\medskip

\begin{thm}\label{t:equality-KS-dc}
In the notation of Conjecture~\ref{conj:KS-equality}, we have {\rm \. (b)~$\Leftrightarrow$~(c).}
\end{thm}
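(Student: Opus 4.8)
The plan is to reduce Theorem~\ref{t:equality-KS-dc} to the single equivalence
\[
(\ast)\qquad \text{(b) holds with } z=x \quad\Longleftrightarrow\quad (x,y) \text{ satisfies the $k$-midway property},
\]
where ``(b) holds with $z=x$'' abbreviates: for every \ts $L\in\Ec(P)$ \ts with \ts $L(y)-L(x)=k$, we have \ts $L(x)\ge 2$ \ts and the two elements of \ts $L$ \ts in positions \ts $L(x)-1$ \ts and \ts $L(x)+1$ \ts are incomparable to $x$. Granting $(\ast)$, I would conclude by duality: the order-reversing bijection \ts $L\mapsto L^\ast$ \ts with \ts $L^\ast(u):=n+1-L(u)$ \ts sends linear extensions of $P$ with \ts $L(y)-L(x)=k$ \ts to linear extensions of the dual poset \ts $P^\ast$ \ts with \ts $L^\ast(x)-L^\ast(y)=k$, it sends the sandwich condition for $z$ in $P$ to the sandwich condition for $z$ in \ts $(P^\ast,(y,x))$ \ts (the condition is symmetric in the two neighbors of $z$), and it sends the $k$-midway property of \ts $(x,y)$ \ts in $P$ to the dual $k$-midway property of \ts $(y,x)$ \ts in \ts $P^\ast$. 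Hence $(\ast)$ applied to \ts $P^\ast$ \ts also yields ``(b) holds with $z=y$ \ts $\Leftrightarrow$ \ts $(x,y)$ satisfies the dual $k$-midway property'', and disjoining over \ts $z\in\{x,y\}$ \ts gives exactly \ts (b)$\,\Leftrightarrow\,$(c). Throughout I would take \ts $x\prec y$ \ts (so that \ts $h(x,y)=f(y)-f(x)-1$); as in Theorem~\ref{t:F-positive} this is the relevant case, and the argument is cleanest there.

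\textbf{Easy half of $(\ast)$.} To prove that the $k$-midway property implies ``(b) with $z=x$'', I would fix \ts $L\in\Ec(P)$ \ts with \ts $L(y)-L(x)=k$ \ts and argue by counting positions, in the spirit of the short implications of Section~\ref{s:q-KS-equality}. If \ts $L(x)=1$ \ts then \ts $L(y)=k+1$, so all \ts $f(y)$ \ts elements below $y$ lie in positions \ts $1,\dots,k$, giving \ts $f(y)\le k$ \ts and contradicting the clause \ts $f(y)>k$; hence \ts $L(x)\ge 2$. Let \ts $u,v$ \ts be the elements in positions \ts $L(x)-1$ \ts and \ts $L(x)+1$. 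If \ts $u\prec x$, then every \ts $w$ \ts with \ts $u\prec w\prec y$ \ts sits strictly between positions \ts $L(x)-1$ \ts and \ts $L(x)+k$, so \ts $h(u,y)\le k$, contradicting the second clause at \ts $z=u$. If \ts $v\succ x$, then \ts $v\not\succ y$ \ts (otherwise \ts $L(v)>L(y)>L(x)+1=L(v)$), so the first clause gives \ts $f(v)+g(y)>n-k$; but the \ts $f(v)$ \ts elements below $v$ sit in positions \ts $\le L(x)$ \ts and the \ts $g(y)$ \ts elements above $y$ in positions \ts $\ge L(y)+1$, so \ts $f(v)+g(y)\le L(x)+(n-L(x)-k)=n-k$, a contradiction. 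Thus \ts $u\,||\,x$ \ts and \ts $v\,||\,x$, i.e.\ $x$ is sandwiched in $L$.

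\textbf{Hard half of $(\ast)$, and the main obstacle.} For the converse I would prove the contrapositive: a failure of the $k$-midway property produces some \ts $L\in\Ec(P)$ \ts with \ts $L(y)-L(x)=k$ \ts in which $x$ is not sandwiched. Using \ts $h(x,y)=f(y)-f(x)-1$, one checks that any failure falls into one of three, possibly overlapping, cases: \textbf{(I)} there is \ts $z^\ast$ \ts with \ts $x\prec z^\ast$, \ts $z^\ast\not\succ y$ \ts and \ts $f(z^\ast)+g(y)\le n-k$; \textbf{(II)} there is \ts $z^\ast\prec x$ \ts with \ts $h(z^\ast,y)\le k$; \textbf{(III)} \ts $f(x)=0$ \ts and \ts $f(y)\le k$. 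In case (I) I take \ts $z^\ast$ \ts minimal among such elements, which forces \ts $x$ \ts to be maximal in the principal ideal \ts $D[z^\ast]$; then there is a linear extension starting with a linear extension of \ts $P|_{D[z^\ast]}$ \ts that places \ts $x$ \ts penultimate and \ts $z^\ast$ \ts last in that block and finishes arbitrarily, so that \ts $z^\ast\succ x$ \ts immediately follows \ts $x$; finally one moves the gap \ts $L(y)-L(x)$ \ts to the value \ts $k$ \ts by iterating the promotion-like maps from the proof of Theorem~\ref{t:F-positive} (the map \ts $\Phi$ \ts to increase the gap, the map \ts $\Psi$ \ts or its $y$-side variant to decrease it), each of which acts only on positions \ts $\ge L(y)$ \ts or shifts the block \ts $[L(x),L(w)-1]$ \ts up by one, hence keeps \ts $z^\ast$ \ts immediately after \ts $x$; the inequalities \ts $f(z^\ast)+g(y)\le n-k$ \ts and \ts $h(x,y)<k$ \ts (the latter from \ts $\aFr(k)>0$) \ts guarantee the needed map stays available until the gap equals \ts $k$. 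Case (II) is symmetric: take \ts $z^\ast$ \ts maximal in \ts $\{z'\prec x : h(z',y)\le k\}$ (hence maximal in \ts $D(x)$), place \ts $z^\ast$ \ts immediately before \ts $x$, and slide the gap with the variants of the maps that move \ts $y$ \ts and leave the positions near \ts $x$ \ts alone. Case (III) is a direct count: since \ts $x$ \ts is minimal, \ts $f(y)\le k$, \ts and \ts $f(x)+g(y)<n-k$ \ts by \ts $\aFr(k)>0$, one places \ts $x$ \ts first, the \ts $f(y)-1$ \ts elements below \ts $y$ \ts other than \ts $x$ \ts together with exactly the right number of further elements incomparable to \ts $y$ \ts in positions \ts $2,\dots,k$, then \ts $y$ \ts at position \ts $k+1$, then everything above \ts $y$, obtaining \ts $L$ \ts with \ts $L(x)=1$. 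In each case \ts $x$ \ts is not sandwiched, which finishes $(\ast)$ and the theorem. The hard part will be exactly this converse: one must verify that the gap-sliding promotion-like maps remain applicable all the way down (or up) to gap \ts $k$, never destroy the adjacency of \ts $x$ \ts with the chosen comparable element, and one must treat with care the boundary situations (for instance \ts $f(x)=0$, \ts or narrow gap ranges) where the ideal-size inequalities coming from \ts $\aFr(k)>0$ \ts and Theorem~\ref{t:F-positive} \ts barely hold.
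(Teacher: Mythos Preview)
Your easy direction (c)$\Rightarrow$(b) is exactly the paper's argument. The hard direction (b)$\Rightarrow$(c) is genuinely different: the paper proves it \emph{directly}, not by contrapositive. Assuming (b) with $z=x$, for each $w\succ x$ with $w\not\succ y$ the paper builds a single linear extension with $L(y)-L(x)=k$, $L(y)=n-g(y)$ and $L(w)=f(w)+1$, using two promotion-like maps $\Omega,\Theta$; the crucial point is that these maps \emph{use} the sandwich hypothesis (b) at each step (whenever $x$ needs to move, the guaranteed incomparable neighbor $v$ at position $L(x)+1$ is swapped with $x$), so the construction never has to worry about preserving an auxiliary adjacency. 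From this extremal $L$ one reads off $f(w)+g(y)>n-k$ in one line, and the clauses for $z\prec x$ and $f(y)>k$ follow by the analogous construction. This is shorter and avoids the case split entirely.

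Your contrapositive route is plausible but carries exactly the burden you flag, and there are two concrete issues. First, the identity $h(x,y)=f(y)-f(x)-1$ is false in general (take $x\prec y$, $a\prec y$, $a\,\|\,x$); your case trichotomy still holds, but the correct reason case~(III) can be absorbed into case~(II) when $f(x)\ge 1$ is simply that any $z\prec x$ satisfies $h(z,y)\le f(y)-1\le k-1$. Second, in case~(I) (and symmetrically in~(II)) the gap-sliding can genuinely break the adjacency of $z^\ast$ with $x$: if one must \emph{decrease} the gap and every element strictly between $x$ and $y$ is $\succ x$, you are forced to use the $y$-side map, and when the maximal-position element incomparable to $y$ in that window is $z^\ast$ itself (which happens precisely when $z^\ast\,\|\,y$ and $h(x,y)=k-1$), the map ejects $z^\ast$ from position $L(x)+1$. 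In that boundary situation the conclusion is still true, but for the trivial reason that $h(x,y)=k-1$ forces every $L$ with $L(y)-L(x)=k$ to have an element $\succ x$ at position $L(x)+1$; you would need to isolate this sub-case explicitly. The paper's direct approach sidesteps all of this, since having (b) in hand means the promotion steps can always swap $x$ with its incomparable neighbor rather than protect a fixed comparable one.
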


\smallskip

In other words, condition~(b) in Conjecture~\ref{conj:KS-equality}, which is the same as
condition~(e) in Theorem~\ref{t:q-KS-equality}, can be viewed as a stepping stone towards
the structural condition~(c) in the conjecture.  We omit it from the introduction
for the sake of clarity.

\smallskip

\begin{proof}[Proof of Theorem~\ref{t:equality-KS-dc}]
For \. (c)~$\Rightarrow$~(b),
let \ts $(x,y)$ \ts be a pair of elements which satisfies the $k$-midway property.
We prove~(b) by setting \ts $z \gets x$.
Let $L \in \Ec(P)$ such that \ts $L(y)-L(x)=k$.
Note that $L(x)>1$
as otherwise \ts $L(y)=k+1$, which contradicts the assumption that $f(y) >k$.
Let \ts $u \in X$ \ts be such that \ts $L(u) = L(x)-1$.
Suppose to the contrary that  \ts $u \prec x$.
It then follows from $k$-midway property that \ts $h(u,y) >  k$.
 On the other hand, since \ts $L(u) =L(x)-1 = L(y)-k-1 $, we have \ts $h(u,y) \leq k$, and gives us the desired contradiction.

Now, let \ts $v \in X$ \ts be such that \ts $L(v) = L(x)+1$.
We will again show that  \ts $v \. || \. x$.
Suppose to the contrary that $v \succ x$.
Note that \ts $v \not \succ y$ \ts since $L(v)<L(y)$.
It then follows from $k$-midway property that \ts $f(v) >  n-g(y)-k$.
 On the other hand, since $L(v) =L(x)+1$, we have \ts $f(v) \leq L(x) = L(y)-k$.
 We then obtain  \ts $L(y) - k >  n-g(y)-k$, which contradicts the fact that \ts $L(y) \leq n-g(y)$.

Thus, the pair of elements \ts $(u,v)$ \ts are as in~(b), as desired.
The case when $(x,y)$ satisfies the dual $k$-midway property leads analogously to~(b) by setting \ts $z \gets y$.

\medskip

For \. (b)~$\Rightarrow$~(c), suppose that in~(b) we have \ts $z=x$.
Now let \ts $w \in X$ \ts be such that \ts $w \succ x$ \ts and \ts  $w \not \succ y$.
 The proof is based on the following

\medskip

\nin
\textbf{Claim:}
There exists a linear extension \ts $L\in \Ec(P)$, such that
\[L(y)-L(x)=k,  \quad L(y)=n-g(y), \quad \text{ and } \quad L(w)=f(w)+1. \]

\medskip

\nin
{\it Proof of Claim.} \.
Since \ts $\aF(k)>0$, there exists a linear extension \ts $L\in \cF(k)$, i.e.\ such that \ts $L(y)-L(x)=k$.
The claim follows if \ts $L(y)=n-g(y)$ and \ts $L(w)=f(w)+1$.
So suppose that \ts $L(y) < n-g(y)$.

Then there exists \ts $p \in X$ \ts such that \ts $p \. || \. y$ \ts and \ts $L(p)>L(y)$,
and let $p$ be such an element for which $L(p)$ is minimal.  Let \ts $a:=L(y)$ \ts and \ts $b:=L(p)$.
This minimality assumption implies that every  \. $q\in \{L^{-1}(a),\ldots, L^{-1}(b-1) \}$  \ts satisfies \ts $q \succ y$,
which implies \ts $q \. ||\. p$.

Now, by~(b) there exists \ts $v \in X$ \ts  such that \ts $v \. || \. x $ \ts and \ts $L(v)=L(x)+1$.
Define \ts $L'\in \Ec(P)$ \ts by setting
\begin{align*}
& L'(p):=L(y), \quad L'(y):=L(y)+1, \quad  L'(x): =L(x)+1, \quad L'(v):=L(x), \\
 & L'(q) := L(q)+1 \ \ \ \text{for all} \ \ \ q \in X \ \ \ \text{s.t.} \ \ \. a\le L(p)\le b-1, \quad \text{and}
\end{align*}
setting \ts $L'(q):=L(q)$ \ts for all other elements \ts $q\in X$.
Note that \ts $L'(y)-L'(x)=k$, so $L'\in \cF(k)$.

Denote by \. $\Om: L \to L'$ \. the resulting map on~$\cF(k)$. From above, $\Om$ \ts
increases \ts $L(y)$ \ts by one when defined.  Iterate~$\Om$ until we obtain a
linear extension that satisfies \ts $L(y)-L(x)=k$ \ts and \ts $L(y)=n-g(y)$.

We will now show that  we can modify the current $L$ to additionally satisfy \ts $L(w)=f(w)+1$.
We are done if this is already the case, and since $L(w) \geq f(w)+1$ by definition of $f$, we can without loss of generality assume that \ts $L(w)> f(w)+1$. We will find a new $L'$, which preserves $L(x)$ and $L(y)$ while decreasing $L(w)$ by 1.
 Note that \ts $L(x) < L(w) < L(y)$ \ts since  \ts $x \prec w \not \succ y$ and $L(y)$ is at its maximal value.
Since $L(w)>f(w)+1$, there exists $p \in X$ such that \ts $p \. || \. w$ \ts and $L(p) < L(w)$, and let $p$ be such an element for which $L(p)$ is maximal.
By the same argument as in the previous paragraph, we can then create a new linear extension~$L'$ by moving~$p$ to the right of $w$, i.e.,
\[ L'(p)  \ := \ L(w), \qquad L'(w)=L(w)-1, \qquad  L'(q) \ := \
  \begin{cases}
  	L(q)- 1 & \text{ if } \   L(p)< L(q) < L(w) \\
  	L(q) & \text{if }  \  L(q) > L(w) \ \text{ or } \ L(q) <L(p).
  \end{cases}  \]
Note  that \ts $L'(w)=L(y)$ \ts  since  \ts $L(p)$ \ts and \ts $L(w)$ \ts are less than $L(y)$ by assumption.
If \ts $L(x)<L(p)$, then \ts $L'(x)=L(x)$ and we are done.
Otherwise, let $v$ be the element of $P$ such that \ts $L(v)=L(x)+1$ \ts and \ts $v \. || \. x$, which exists by~(b).
Now exchange the values of $L'$ at $v$ and $x$, so \ts $L'(v)=L(x)+1$ and \ts $L'(x)=L(v)-1=L(x)$.
This is so that the resulting linear extension $L'$ always satisfies \ts $L'(y)-L'(x)=k$.
Also note that \ts $L'(w)=L(w)-1$ \ts and \ts $L'(y)=L(y)$ \ts by construction.

We thus  obtain a map \ts $\Theta: L \to L'$ \ts  such that $L'(w)=L(w)-1$,
while preserving the values of the linear extensions at $y$ and $x$, i.e. \ts $L'(y)=n-g(y)$ \ts and \ts $L'(y)-L'(x)=k$.
Iterate~$\Theta$ until we obtain a linear extensions that satisfies \ts $L(w)=f(w)+1$,
and the proof is complete.
\ $\sq$

\medskip

We now have
$$
f(w)+1 \ = \ L(w) \ > \  L(x) +1 \ =  \ n- g(y)-k+1,
$$
where the equalities are due to the claim above, while the inequality is due to applying~(b) to conclude that $L(w) \neq L(x)+1$ since $w \neq v$.
This shows that \ts $f(w) > n-g(y)+k$.

	Now let $w \in X$ such that $w \prec x$.
By an analogous argument, we conclude that
there exists a linear extension \ts $L\in \Ec(P)$ \ts such that \ts $L(y)-L(x)=k$, \ts $L(y)=f(y)+1$, and $L(y)-L(w)= h(w,y)+1$.
On the other hand, we have
\[  h(w,y)+ 1  \ = \ L(y) - L(w)  \ = \ k + L(x) -L(w) \ \geq  \ k+2,  \]
where the equalities are due to the claim above, while the inequality is due to applying~(b) to $w$.
This shows that \. $h(w,y) \geq k+1$.
Now note that  \. $L(x) >1$ \. by (b), so it then follow that
\[ f(y)  \ = \  L(y) -1 \ =  L(x) + k- 1 > k. \]
We thus conclude that  \ts $(x,y)$ \ts satisfies
the \ts $k$-midway property.

\smallskip

Finally, suppose that \ts $z=y$. In this case we obtain that \ts $(x,y)$ \ts
satisfies the dual $k$-midway property.  This follows by taking a
dual poset~$P^\ast$, and relabeling \ts $x\leftrightarrow y$, \ts $f \leftrightarrow g$  \ts
in the argument above. This completes the proof of the theorem.  \end{proof}

\smallskip

\begin{rem}{\rm
Our proof of the \. (c)~$\Rightarrow$~(b) \. implication in Theorem~\ref{t:equality-KS-dc},
is a variation on the proof of the implication \. (c)~$\Rightarrow$~(b) \.
in Theorem~\ref{t:Sta-equality-gen}, given in~\cite[$\S$15.1]{SvH}.  Of course, the
details are quite a bit more involved in our case.
}
\end{rem}

\medskip

\subsection{Back to posets of width two}
For posets of width two,
the $k$-midway property is especially simple,
and can be best understood from Figure~\ref{fig:separable}.

\smallskip

\begin{prop} \label{prop:equality-KS-midway}
In notation of Conjecture~\ref{conj:KS-equality}, let \ts $P=(X,\prec)$ \ts be a poset
of width two, and let \ts $(\Cr_1,\Cr_2)$ \ts be partition into two chain as in the
introduction.  Let \ts $(x,y)$ \ts
be a pair of elements in~$\Cr_1$, where \ts $x=\al_s$ \ts
and \ts $y=\al_{s+r}$.
  Then \ts $(x,y)$ \ts satisfies $k$-midway property if and only if
there are integers \. $1< c < d\le n$, such that:

\smallskip

$\circ$ \ $\al_{s-1}\prec \be_{c-s} \prec \ldots \prec \be_{d-s} \prec \al_{s+1}$\.,

$\circ$ \ $\be_{c+k-r-s}\prec \al_{s+r} \prec \be_{d+k-r-s}$\.,

$\circ$ \ $\al_s \.{}||\.{}\be_{c-s}\. ,\. \ldots\.,\. \al_s \.{}||\.{}\be_{d-s}$\., and

$\circ$ \ $\al_{s+r} \.{}||\.{}\be_{c+(k-r-s)+1}\.,\. \ldots\ts,\ts \al_{s+r} \.{}||\.{}\be_{d+(k-r-s)-1}$\..

\end{prop}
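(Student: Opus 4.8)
The plan is to convert the three clauses of the $k$-midway property into explicit arithmetic inequalities among the comparabilities of the width-two poset, and then read off the four displayed conditions for a suitable choice of $c$ and $d$; this is in the spirit of the case analysis behind Proposition~\ref{prop:pentagon}, only heavier. First I would fix the chain partition and, for each $\al_h\in\Cr_1$, record $\lambda_h:=\big|\{j:\be_j\prec\al_h\}\big|$ and $\rho_h:=\big|\{j:\be_j\succ\al_h\}\big|$. Since $\Reg(P)$ is row- and column-convex (Lemma~\ref{l:interpretation lattice path}), the sequence $(\lambda_h)$ is nondecreasing, $(\rho_h)$ is nonincreasing, $\lambda_h+\rho_h\le\bnb$, and $\be_j\prec\al_h$ iff $j\le\lambda_h$ while $\be_j\succ\al_h$ iff $j>\bnb-\rho_h$. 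Recording the analogous quantities for $\Cr_2$, one obtains closed forms $f(\al_h)=(h-1)+\lambda_h$, $g(\al_h)=(\ana-h)+\rho_h$, and $h(\al_h,\al_{h'})=(h'-h-1)+\max(0,\lambda_{h'}+\rho_h-\bnb)$ for $h<h'$, together with similar expressions whenever an argument lies in $\Cr_2$. One also notes that the extreme levels $u_0\le u_1$, $u_2\le u_3$ around $x=\al_s$ and $w_0\le w_3$ around $y=\al_{s+r}$, as defined in Section~\ref{s:q-KS-equality}, satisfy $u_0=s+\lambda_s$, $u_1=s+\bnb-\rho_{s-1}$, $u_2=s+\lambda_{s+1}$, $u_3=s+\bnb-\rho_s$, while $w_0+k$ and $w_3+k$ are the smallest and largest values $L(y)$ can take.

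Next, writing $m:=s+r$, I would substitute these formulas into the three clauses. The clause $f(y)>k$ becomes $\lambda_m+m-k>1$. For ``$h(z,y)>k$ for all $z\prec x$'', monotonicity makes the binding instance among $\Cr_1$ the element $\al_{s-1}$, and a short computation (using $\nu_j=|\{h:\lambda_h\ge j\}|$, and $k\ge r$, which holds since $\aFr(k)>0$ is inherited from Conjecture~\ref{conj:KS-equality}) shows the binding instance among $\Cr_2$ is implied by it; the surviving inequality $h(\al_{s-1},\al_m)>k$ unpacks to $\lambda_m+\rho_{s-1}\ge\bnb+k-r+1$. For ``$f(z)+g(y)>n-k$ for all $z$ with $x\prec z\not\succ y$'', the binding $\Cr_1$-instance is $\al_{s+1}$ (the choice $z=y$ is subsumed because $f$ is nondecreasing along $\Cr_1$), giving $f(\al_{s+1})>n-k-g(y)$, i.e.\ $\lambda_{s+1}+\rho_m\ge\bnb+r-k+1$, and the $\Cr_2$-instances are again subsumed. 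Now I would set $c:=\lambda_m+m-k$ and $d:=\bnb-\rho_m+m-k+1$, so that $c+k$ and $d+k-1$ are the smallest and largest values of $L(y)$ and $d-c-1$ is the number of $\be_j$ incomparable to $y$; a direct check shows the three collected inequalities are equivalent to $u_1<c$, $d\le u_2$, $c>1$. Finally I would verify that these are exactly the four displayed bullets: the relations $\be_{c+k-m}\prec\al_m\prec\be_{d+k-m}$ and the incomparabilities $\al_m\.||\.\be_{c+k-m+1},\dots,\be_{d+k-m-1}$ simply re-encode the definitions of $c$ and $d$; the relation $\al_{s-1}\prec\be_{c-s}$ is $u_1<c$; the relation $\be_{d-s}\prec\al_{s+1}$ is $d\le u_2$; the incomparabilities $\al_s\.||\.\be_{c-s},\dots,\be_{d-s}$ follow from $u_0<c$ and $d\le u_3$, which are weaker than the previous two; and $1<c<d$ amounts to $c>1$ together with the automatic $\lambda_m+\rho_m\le\bnb$. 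The two directions of the proposition then read this chain of equivalences forwards and backwards.

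I expect the main obstacle to be the case bookkeeping hidden in the middle paragraph. One has to verify, in each clause, that the instances of $z$ lying in $\Cr_2$ are implied by the instances in $\Cr_1$; this is where the monotonicity of $(\lambda_h),(\rho_h)$ and the cross-identities such as $\nu_j=|\{h:\lambda_h\ge j\}|$ enter, and it requires splitting on whether $\lambda_{s-1}<\lambda_s$ (and dually). One also has to handle the degenerate configurations separately: $s=1$ or $s+r=\ana$ (where $\al_{s-1}$, resp.\ $\al_{s+1}$, is read as $\wh0$, resp.\ $\wh1$), the case $r=1$ where $\al_{s+1}=y$, and the case where $y$ is comparable to every element of $\Cr_2$ (then $c=d-1$, the list $\al_m\.||\.\be_{c+k-m+1},\dots$ is empty, and $1<c<d$ still makes sense). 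As a less computational alternative, one could instead invoke Theorem~\ref{t:equality-KS-dc} to replace the $k$-midway property by condition~\eqref{itemequality combinatorial} of Theorem~\ref{t:q-KS-equality} witnessed by the choice $z=x$, then use the analysis in Section~\ref{s:q-KS-equality} that identifies this condition (for width-two posets) with condition~\eqref{itemS1} --- the width-$1$ vertical strip above $x$ over the effective range $[\vmin,\vmax]$ --- and finally translate that geometric strip into the four bullets; this route trades the arithmetic for careful accounting of which linear extensions realize the levels $u_0,u_1,u_2,u_3$ and $w_0,w_3$.
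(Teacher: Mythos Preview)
Your alternative route---replacing the $k$-midway property by condition~\eqref{itemequality combinatorial} via Theorem~\ref{t:equality-KS-dc}, then reading off the width-one vertical strip~\eqref{itemS1} from Section~\ref{s:q-KS-equality}---is exactly what the paper does: its proof is the single sentence ``follows directly from the proof of Theorem~\ref{t:q-KS-equality} in Section~\ref{s:q-KS-equality}, where we let $c:=\vmin$ and $d:=\vmax+1$; we omit the details.'' Your main route, the direct arithmetic unpacking of the three midway clauses, is a genuinely different and more self-contained argument that does not lean on the heavy Section~\ref{s:q-KS-equality} machinery; that is a real gain.

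There is, however, a concrete gap in the main route. You assert that for the first midway clause the $\Cr_2$-instances are subsumed by the $\Cr_1$-instance $z=\al_{s+1}$, and later that the incomparabilities $\al_s\,\|\,\be_{c-s},\dots,\be_{d-s}$ follow because ``$u_0<c$ and $d\le u_3$ are weaker than'' $u_1<c$ and $d\le u_2$. Both claims tacitly assume $u_2\le u_3$, i.e.\ $\lambda_{s+1}+\rho_s\le\bnb$, which fails precisely when some $\be_j$ satisfies $\al_s\prec\be_j\prec\al_{s+1}$. For such a $\be_j$ one has $\mu_j=s$, so $f(\be_j)=j-1+s$, and the first midway clause at $z=\be_j$ reads $j>\bnb+r-k-\rho_m+1$; taking the smallest such $j=\bnb-\rho_s+1$ gives $\rho_s-\rho_m\le k-r-1$, i.e.\ $d\le u_3$. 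This is \emph{not} implied by $d\le u_2$ when $u_2>u_3$, so your ``three collected inequalities'' are one short. The fix is easy and local: keep the $\Cr_2$ binding instance as a fourth inequality $d\le u_3$; then the four bullets are equivalent to $\{c>u_1,\ d\le u_2,\ d\le u_3,\ c>1\}$ (bullet~3 supplies exactly $d\le u_3$), and both directions of the equivalence go through as you outlined. A symmetric check is needed on the $h(z,y)>k$ side, but there $u_0\le u_1$ is always true, so no extra constraint appears.
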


\smallskip

The proposition follow directly from the proof of Theorem~\ref{t:q-KS-equality} in
Section~\ref{s:q-KS-equality}, where we let \ts $c:= \vmin$ \ts and \ts $d:=\vmax+1$\ts.
We omit the details.
Note also that when \ts $y=\wh 1$ is the maximal element, we obtain the $(n-k)$-pentagon property.

\begin{figure}[hbt]
\begin{center}
\includegraphics[width=7.5cm]{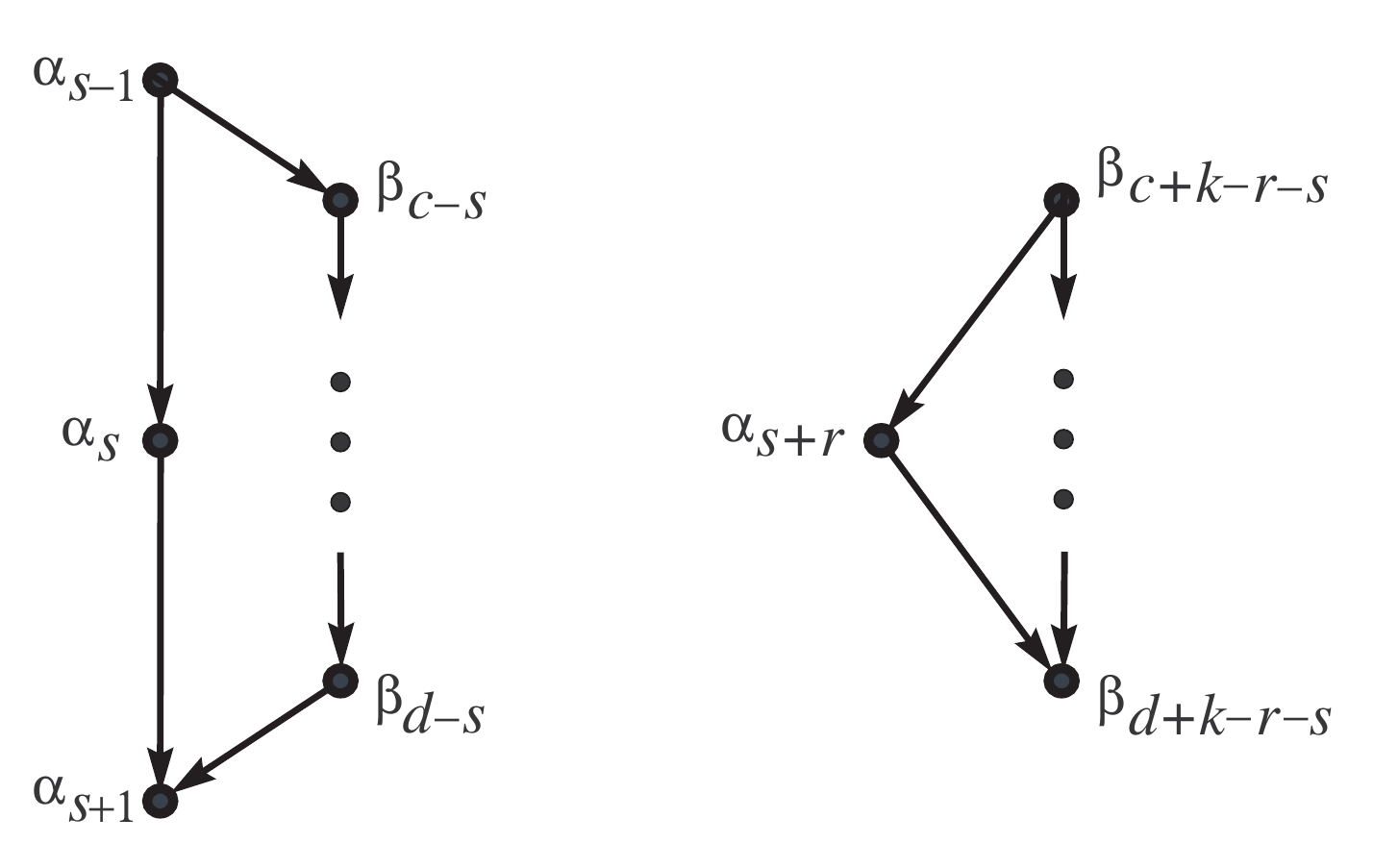}
\end{center}
\vskip-.3cm
\caption{The $k$-midway property for the pair \ts $(\al_s, \al_{s+r})$.
The arrows point from smaller to larger poset elements.}\label{fig:separable}
\vskip.3cm
\end{figure}

\smallskip

\begin{rem}{\rm Figure~\ref{fig:separable} may seem surprising at first due
to its vertical symmetry.  So let us emphasize that in contrast with the $k$-pentagon
property, the $k$-midway property is not invariant under poset duality due to the asymmetry
of the labels. This is why it is different from the dual $k$-midway property
even for posets of width two.
}\end{rem}


\bigskip

\section{Final remarks and open problems}\label{sec:finrem}

\subsection{}\label{ss:finrem-hist}
Finding the equality conditions is an important problem for inequalities
across mathematics, see e.g.~\cite{BB}, and throughout the sciences,
see e.g.~\cite{Dahl}.  Notably, for geometric inequalities,
such as the \emph{isoperimetric inequalities}, these problems are classical
(see e.g.~\cite{BZ}), and in many cases the equality conditions are equally
important and are substantially harder to prove. For example, in the
\emph{Brunn--Minkowski inequality}, the equality conditions
are crucially used in the proof of the \emph{Minkowski theorem}
on existence of a polytope with given normals
and facet volumes (see e.g.\ $\S$7.7, $\S$36.1 and $\S$41.6 in~\cite{Pak}).

For poset inequalities, the equality conditions have also been studied, see
e.g.\ an overview in~\cite{Win}.  In fact, Stanley's original paper~\cite{Sta}
raises several versions of this question. In recent years, there were a number of key
advances on combinatorial inequalities using algebraic and analytic tools, see e.g.~\cite{CP,Huh},
but the corresponding equality conditions are understood in only very few instances.\footnote{See
a {\tt MathOverflow} discussion here:  \url{https://mathoverflow.net/questions/391670}.}

\medskip

\subsection{}\label{ss:finrem-KS}
In a special case of the Kahn--Saks inequality, finding the equality conditions
in full generality remains a major challenge.  From this point of view, the
equivalences \. (a) \. $\Rightarrow$ \. (b)  \. $\Rightarrow$ \. (c) \. in
Theorem~\ref{t:q-KS-equality} combined with Theorem~\ref{t:equality-KS-dc}
is the complete characterization in a special case of width two posets with
two elements in the same chain.  As we mentioned in the introduction, this
result is optimal and does not extend even to elements in different chains.

We should also mention that both Stanley's inequality (Theorem~\ref{t:Sta})
and the equality conditions in Stanley's inequality
inequality (Theorem~\ref{t:Sta-equality-gen}) was recently proved by elementary
means in~\cite{CP}. Despite our best efforts, the technology of~\cite{CP} does
not seem to translate to the Kahn--Saks inequality, suggesting the difference
between the two.  In fact, the close connection between the inequalities and
equality conditions in the proofs of~\cite{CP} hints that perhaps the equality
conditions of the Kahn--Saks inequality are substantially harder to obtain.

\medskip

\subsection{}\label{ss:finrem-ineq}  From the universe of poset inequalities,
let us single out the celebrated \emph{XYZ inequality}, which was later proved to
be always strict \cite{Fish} (see also~\cite{Win}).  Another notable example
in the \emph{Ahlswede--Daykin correlation inequality} whose equality was
studied in a series of papers, see~\cite{AK} and references therein.

The \emph{Sidorenko inequality} is an equality if and only if a poset is
\emph{series--parallel}, as proved
in the original paper~\cite{Sid}.  The latter inequality turned out to be a
special case of the conjectural \emph{Mahler inequality}.  It would be interesting to
find an equality condition of the more general \emph{mixed Sidorenko inequality}
for pairs of two-dimensional posets, recently introduced in~\cite{AASS}.

In our previous paper~\cite{CPP2}, we proved both the \emph{cross--product inequality}
as well the equality conditions for the case of posets of width two.  While in full
generality this inequality implies the Kahn--Saks inequality, the reduction does not
preserve the width of the posets, so the results in~\cite{CPP2} do not imply the results in
this paper.
Let us also mention some recent work on poset inequalities for posets of width
two~\cite{Chen,Sah} generalizing the classical approach in~\cite{Lin}.

\medskip

\subsection{}\label{ss:finrem-region}
The bijection in Lemma~\ref{l:interpretation lattice path}, see also
Remark~\ref{rem:general-regions}, is natural
from both order theory and enumerative combinatorics points of view.
Indeed, the order ideals of a width two poset $P$ with fixed chain partitions
\ts $(\Cr_1,\Cr_2)$ \ts are in natural bijection with lattice points in a
region \ts $\Reg(P)\ssu \zz^2$.
Now the \emph{fundamental theorem for finite distributive lattices}
(see e.g.~\cite[Thm 3.4.1]{EC}), gives the same bijection between \ts $\Ec(P)$ \ts
and \ts lattice paths \ts $\zero \to (\ana,\bnb)$ \ts in \ts $\Reg(P)$.

\medskip

\subsection{}\label{ss:finrem-CFG}
As we mentioned in the introduction, the injective proof of the Stanley
inequality (Theorem~\ref{t:q-Sta}) given in Section~\ref{sec:Sta},
does in fact coincide with the CFG injection given in~\cite{CFG}.
The latter is stated somewhat informally, but we find the formalism
useful for generalizations.  In a different direction, our breakdown
into lemmas allowed us a completely different generalization of the Stanley
inequality to exit probabilities of random walks, which we discuss in a
follow up paper~\cite{CPP3}.

\medskip

\subsection{}\label{ss:finrem-promo}
Maps \ts $\Phi,\Psi,\Om, \Theta$ \ts on \ts $\Ec(P)$ \ts used in the proofs of
Theorem~\ref{t:F-positive} and Theorem~\ref{t:equality-KS-dc}, are closely
related to the \emph{promotion} map heavily studied in poset literature,
see e.g.~\cite[$\S$3.20]{EC} and~\cite{Sta-promo}.  We chose to avoid
using the known properties of promotion to keep proofs simple and
self-contained.  Note that the promotion map can also be used to
prove Proposition~\ref{prop:Sta-zero} as we do in greater generality 
in the forthcoming~\cite{CPP4}.

\medskip

\subsection{}\label{ss:finrem-CS}
Recall that computing \ts $e(P)$ \ts is $\SP$-complete even for posets of
height two, or of dimension two; see~\cite{DP} for an overview.
The same holds for \ts $\aN(k)$, which both a refinement and a generalization
of~$\ts e(P)$.  Following the approach in~\cite{Pak}, it is natural to conjecture that \.
$\rT(x,k)\. :=\. \aN(k)^2\ts -\ts \aN(k+1)\.\aN(k-1)$ \. is $\SP$-hard for general posets.
We also conjecture that \ts $\rT(x,k)$ \ts is not in~$\SP$ even though it
is in \ts $\GapP_{\ge 0}$ \ts by definition.  From this point of view,
Corollary~\ref{cor:Sta-positivity} is saying that the decision problem
whether \ts $\rT(x,k)=0$ \ts is in~$\ts \poly$, further complicating the matter.

\medskip

\subsection{}\label{ss:finrem-mixed}
There is an indirect way to derive both Corollaries~\ref{cor:Sta-positivity}
and~\ref{cor:KS-positivity} without explicit combinatorial conditions for
vanishing of \ts $\aN(k)$ \ts and $\aF(k)$, given in
Proposition~\ref{prop:Sta-zero} and Theorem~\ref{t:F-positive}, respectively.
In fact, the vanishing problem is in \ts $\poly$ \ts by the following general result:

\smallskip

\begin{thm} \label{t:mixed-zero}
Let \ts $P=(X,\prec)$ \ts be a finite poset with \ts $|X|=n$ \ts elements, let \.
$x_1\ldots,x_k \in X$ \. be distinct poset elements, and let
\. $a_1,\ldots,a_k\in \{1,\ldots,n\}$ \. be distinct integers.
Finally, let \ts $\aNr(a_1,\ldots,a_k)$ \ts be the number of linear extensions \ts $L \in \Ec(P)$ \ts
such that \ts $L(x_i)=a_i$ \ts for all \ts $1\le i \le i$.
Then, deciding whether \. $\aNr(a_1,\ldots,a_k)=0$ \. can be done
in \ts {\rm poly$(n)$} \ts time.
\end{thm}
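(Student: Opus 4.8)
The plan is to translate the problem into a feasibility question about chains of order ideals, and then to solve that question by a polynomial‑time greedy algorithm; the reduction is essentially bookkeeping, and the single nontrivial point is the correctness of the greedy.

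\emph{Reduction.} Re‑index the pairs so that $a_1<\dots<a_k$, and set $a_0:=0$. I claim that $\aNr(a_1,\dots,a_k)>0$ if and only if $P$ admits a chain of order ideals $\emptyset=J_0\subseteq J_1\subseteq\dots\subseteq J_k$ with $|J_j|=a_j$, with $x_j$ a maximal element of $J_j$, and with $x_i\in J_j$ exactly when $i\le j$. Indeed, given such a chain one builds a linear extension $L\in\Ec(P)$ by listing, for each $j$, the elements of $J_j\setminus J_{j-1}$ in positions $a_{j-1}+1,\dots,a_j$ as a linear extension of $P|_{J_j\setminus J_{j-1}}$ with $x_j$ placed last (possible because $x_j$ is maximal in $J_j$, hence in $P|_{J_j\setminus J_{j-1}}$), and finally listing $X\setminus J_k$ in the remaining positions; conversely $J_j:=L^{-1}(\{1,\dots,a_j\})$ works. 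This is the same dictionary between linear extensions and ideal chains as in the fundamental theorem for finite distributive lattices (cf.\ $\S$\ref{ss:finrem-region}). Next, the three membership conditions on $J_j$ are equivalent to the single containment $\mathcal R_j\subseteq J_j\subseteq\mathcal Y_j$, where $\mathcal R_j:=\bigcup_{i\le j}\{v:v\preccurlyeq x_i\}$ and $\mathcal Y_j:=X\setminus\bigl(\{v:v\succ x_j\}\cup\bigcup_{i>j}\{v:v\succcurlyeq x_i\}\bigr)$: both $\mathcal R_j$ and $\mathcal Y_j$ are themselves order ideals (a union of principal ideals, and the complement of a union of principal filters), and both families are nondecreasing in $j$. (For $k=1$ this recovers Proposition~\ref{prop:Sta-zero}, since the condition becomes $|\mathcal R_1|\le a_1\le|\mathcal Y_1|$.)

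\emph{Algorithm.} Set $J_0:=\emptyset$ and, for $j=1,\dots,k$, let $K_j:=J_{j-1}\cup\mathcal R_j$ (again an order ideal). If $K_j\not\subseteq\mathcal Y_j$, or $|K_j|>a_j$, or $|\mathcal Y_j|<a_j$, report $\aNr=0$; otherwise enlarge $K_j$ to an order ideal $J_j$ of size $a_j$ contained in $\mathcal Y_j$ by repeatedly adjoining a minimal element of $\mathcal Y_j$ not yet used, giving priority to elements lying in $\{v:v\preccurlyeq x_i\}$ for the smallest $i>j$ admitting such an element (ties broken arbitrarily). If all $k$ rounds succeed, report $\aNr>0$. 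Each round performs $O(n)$ order‑ideal manipulations, so the procedure runs in $\mathrm{poly}(n)$ time. If the greedy succeeds, the $J_j$ form a valid chain, so $\aNr>0$; this direction is immediate.

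\emph{Correctness and the main obstacle.} It remains to show that if a valid chain $J_1^{\ast}\subseteq\dots\subseteq J_k^{\ast}$ exists, the greedy never reports $\aNr=0$. I would argue this by induction on $j$ using the dominance invariant $|J_j\cap\mathcal R_{j'}|\ge|J_j^{\ast}\cap\mathcal R_{j'}|$ for all $j'>j$. Granting it at $j-1$, a premature abort at round $j$ is impossible: $K_j\subseteq\mathcal Y_j$ because $J_{j-1}\subseteq\mathcal Y_{j-1}\subseteq\mathcal Y_j$ and $\mathcal R_j\subseteq J_j^{\ast}\subseteq\mathcal Y_j$; $|\mathcal Y_j|\ge|J_j^{\ast}|=a_j$; and
\begin{align*}
|K_j| \ &= \ a_{j-1}+|\mathcal R_j|-|J_{j-1}\cap\mathcal R_j| \ \le \ a_{j-1}+|\mathcal R_j|-|J_{j-1}^{\ast}\cap\mathcal R_j| \\
&= \ |J_{j-1}^{\ast}\cup\mathcal R_j| \ \le \ |J_j^{\ast}| \ = \ a_j,
\end{align*}
using $J_{j-1}^{\ast}\cup\mathcal R_j\subseteq J_j^{\ast}$. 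The hard part will be propagating the dominance invariant through the ``enlarge $K_j$ to size $a_j$'' step: one must show that the priority greedy yields a size‑$a_j$ ideal in the lattice interval $[K_j,\mathcal Y_j]$ that simultaneously maximizes $|J_j\cap\mathcal R_{j'}|$ over all $j'>j$ — the sets $\mathcal R_{j+1}\cap\mathcal Y_j\subseteq\dots\subseteq\mathcal R_k\cap\mathcal Y_j$ form a chain of ideals, which is precisely what lets one single priority order do this — and then compare it with the passage from $J_{j-1}^{\ast}$ to $J_j^{\ast}$. This is an uncrossing/exchange argument on order ideals, routine in spirit but the one genuinely technical ingredient. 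Alternatively, one can bypass the greedy altogether: the ideal‑chain feasibility problem is a poset‑constrained transportation problem and can be decided by a standard network‑flow computation. Either way, the only nonobvious content is that the precedence relations among the non‑prescribed elements are handled correctly.
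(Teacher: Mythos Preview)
Your approach is correct and takes a genuinely different route from the paper. The paper's proof is a two-line citation: Stanley showed that $\aNr(a_1,\dots,a_k)/(n-k)!$ is a mixed volume of certain explicitly presented polytopes, and Dyer--Gritzmann--Hufnagel showed that vanishing of such mixed volumes is decidable in polynomial time (their algorithm ultimately rests on Edmonds' matroid intersection). No combinatorics of linear extensions enters the paper's argument at all. Your reduction to chains of order ideals sandwiched between $\mathcal R_j$ and $\mathcal Y_j$, followed by a priority greedy, is direct and elementary; the paper itself remarks that an explicit combinatorial criterion for the vanishing of $\aNr(a_1,\dots,a_k)$ ``is of independent interest'' and points to forthcoming work, so you are supplying something the authors wanted but did not yet have. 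What your approach buys is a self-contained $O(n^2)$-type procedure with no geometric or matroid machinery; what the paper's approach buys is brevity and a connection to a general theory.

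The step you flag as the obstacle --- propagating the dominance invariant through the greedy enlargement --- closes in two lines, so you should simply write it out rather than defer to uncrossing or network flow. Because each $\mathcal R_{j'}\cap\mathcal Y_j$ is an order ideal, any minimal element of $(\mathcal R_{j'}\cap\mathcal Y_j)\setminus(\text{current ideal})$ is already minimal in $\mathcal Y_j\setminus(\text{current ideal})$; hence the priority greedy realises
\[
|J_j\cap\mathcal R_{j'}| \ = \ |K_j\cap\mathcal R_{j'}| \ + \ \min\bigl(a_j-|K_j|,\ |(\mathcal R_{j'}\cap\mathcal Y_j)\setminus K_j|\bigr)
\]
for every $j'>j$. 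If the minimum is the second argument, then $J_j\supseteq\mathcal R_{j'}\cap\mathcal Y_j\supseteq J_j^\ast\cap\mathcal R_{j'}$ and you are done. If it is the first, then using $\mathcal R_j\subseteq\mathcal R_{j'}$ one gets $|J_j\cap\mathcal R_{j'}| = a_j - |K_j\setminus\mathcal R_{j'}| = a_j - |J_{j-1}\setminus\mathcal R_{j'}| \ge a_j - |J_{j-1}^\ast\setminus\mathcal R_{j'}|$ by the invariant at step $j-1$; and since $J_{j-1}^\ast\setminus\mathcal R_{j'}$ and $J_j^\ast\cap\mathcal R_{j'}$ are disjoint subsets of $J_j^\ast$, this last quantity is at least $|J_j^\ast\cap\mathcal R_{j'}|$. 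So the invariant propagates and the greedy is provably correct; the network-flow alternative is unnecessary.
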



\begin{proof}
It was shown by Stanley~\cite[Thm~3.2]{Sta}, that \ts $\aN(a_1,\ldots,a_k)/(n-k)!$ \ts
is equal to the \emph{mixed volume} of certain polytopes~$\ts K_i$ \ts given by
explicit combinatorial inequalities.  In the terminology of~\cite[p.~364]{DGH}, these
polytopes~$\ts K_i$ \ts are \emph{well-presented}, so by \cite[Thm~8]{DGH}
the vanishing of the mixed volume can be decided in polynomial time.
\end{proof}

\smallskip

To see the connection between the theorem and condition \ts $\aF(k)=0$, note
that for every fixed \ts $x, y \in X$, we have \. $\aF(k)= \aN(1,k+1) + \ldots + \aN(n-k,n)$.
In the same way, the vanishing \ts $\aF(k,\ell) =0$ \ts in the cross--product inequality
(see~\cite{CPP2}) can also be decided in polynomial time.

Note that the proof in \cite[Thm~8]{DGH} involves a classical
but technically involved \emph{matroid intersection algorithm} by Edmonds (1970).
Thus, finding an explicit combinatorial condition for vanishing
of \. $\aN(a_1,\ldots,a_k)$ \. is of independent interest.\footnote{Most recently,
the authors were able to obtain such conditions by a technical algebraic argument, 
see~\cite{CPP4}.}

\vskip.5cm
	
\subsection*{Acknowledgements}
We are grateful to Fedya Petrov, Ashwin Sah, Raman Sanyal, Yair Shenfeld and Ramon van Handel
for helpful discussions on the subject, and to Leonid Gurvits for telling us about the
\cite{DGH} reference.  Special thanks to Ramon van Handel and Alan Yan for pointing out
an important error in the previous version of the paper, and for suggesting Example~\ref{ex:Ramon}.
The last two authors were partially supported by the NSF.

\vskip1.2cm


\

\vskip.5cm	

\end{document}